\newcommand{\C}{\ensuremath{\mathbb{C}}}
\newcommand{\R}{\ensuremath{\mathbb{R}}}
\newcommand{\createPos}[1]{
	\ifthenelse{\cnttest{337}{<}{#1}  \OR \cnttest{#1}{<}{ 23}}{\def\pos{right}}{%
	\ifthenelse{\cnttest{ 22}{<}{#1} \AND \cnttest{#1}{<}{ 68}}{\def\pos{above right}}{%
	\ifthenelse{\cnttest{ 67}{<}{#1} \AND \cnttest{#1}{<}{113}}{\def\pos{above}}{%
	\ifthenelse{\cnttest{112}{<}{#1} \AND \cnttest{#1}{<}{158}}{\def\pos{above left}}{%
	\ifthenelse{\cnttest{157}{<}{#1} \AND \cnttest{#1}{<}{203}}{\def\pos{left}}{%
	\ifthenelse{\cnttest{202}{<}{#1} \AND \cnttest{#1}{<}{248}}{\def\pos{below left}}{%
	\ifthenelse{\cnttest{247}{<}{#1} \AND \cnttest{#1}{<}{293}}{\def\pos{below}}{%
	\ifthenelse{\cnttest{292}{<}{#1} \AND \cnttest{#1}{<}{338}}{\def\pos{below right}}{}
	}}}}}}}
}
\newcommand{\drawCircle}[6]{
\begin{tikzpicture}[scale=1.0]
	\tikzmath{
		\pt=0.045;
		\r = 1;
		\cx=0;	\cy=0;
		\t1=#1;  \px1=\cx+cos(\t1); \py1=\cy+sin(\t1);
		\t2=#2;  \px2=\cx+cos(\t2); \py2=\cy+sin(\t2);
		\t3=#3;  \px3=\cx+cos(\t3); \py3=\cy+sin(\t3);
		\s1=90;  \qx1=\cx+cos(\s1); \qy1=\cy+sin(\s1);
		\s2=270; \qx2=\cx+cos(\s2); \qy2=\cy+sin(\s2);
	}
	\draw[fill, white] (\qx1, \qy1) circle [radius=\pt] node[above]{$x$};
	\draw[fill, white] (\qx2, \qy2) circle [radius=\pt] node[below]{$x$};

	\foreach \p in {(\px1,\py1), (\px2,\py2), (\px3,\py3)}
	{
		\draw[fill] \p circle [radius=\pt];
	}

	\createPos{\t1}
	\node[\pos]	at (\px1,\py1) {#4};
	
	\createPos{\t2}
	\node[\pos] at (\px2,\py2) {#5};
	
	\createPos{\t3}
	\node[\pos] at (\px3,\py3) {#6};

	\draw(\cx, \cy) circle[radius=\r];
\end{tikzpicture}
}
\title{The topology of Curvature Sets of $\Sp^1$}
\author[ ]{Peter Eastwood\thanks{peter\_eastwood@alumni.brown.edu}}
\author[ ]{Anna M. Ellison\thanks{annamellison@gmail.com}}
\author[1]{Mario G\'{o}mez\thanks{gomezflores.1@buckeyemail.osu.edu}}
\author[1]{Facundo M\'{e}moli\thanks{facundo.memoli@gmail.com}}
\affil[1]{Department of Mathematics. The Ohio State University.}
\begin{document}
\maketitle

\begin{abstract}
	For $n \geq 2$, the $n$-th curvature set of a metric space $X$ is the set consisting of all $n$-by-$n$ distance matrices of $n$ points sampled from $X$. Curvature sets can be regarded as a geometric analogue of configuration spaces. 
 In this paper we carry out a geometric and topological study of the curvature sets of the unit circle $\Sp^1$ equipped with the geodesic metric. Via an inductive argument we compute the homology groups of all  curvature sets of $\Sp^1$. We also construct an abstract simplicial complex, called the $n$-th State Complex, whose geometric realization is homeomorphic to the $n$-th Curvature Set of $\Sp^1$. %
\end{abstract}
\tableofcontents

\section{Introduction.}
\label{sec:intro}
\counterwithout{theorem}{section}

In this paper, we study the topological and geometric structure of the curvature sets of the unit circle $\Sp^1$ equipped with its geodesic metric. For $n \geq 2$, the \emph{$n$-th curvature set} of a metric space $(X, d_X)$ is the set
\begin{equation*}
	\Kn_n(X) := \Big\{M \in \R^{n \times n}: \exists \ \{x_1, \dots, x_n\} \subset X %
	\text{ such that } M_{ij} = d_X(x_i,x_j) \Big\}.
\end{equation*}
The \emph{distance matrix map} $D_n:X^n \to \Kn_n(X)$ is defined by setting $M := D_n(x_1, \dots, x_n)$ to be the matrix such that $M_{ij} := d_X(x_i,x_j)$. In other words, $\Kn_n(X) = \operatorname{Im}(D_n)$ is the set of $n$-by-$n$ distance matrices generated by points of $X$. Similar objects have been studied before. Perhaps the most well known are \emph{configuration spaces}, the sets
\begin{equation*}
	\operatorname{Conf}_n(X) := \{(x_1, \dots, x_n) \in X: \text{ where } x_i \neq x_j \text{ if } i \neq j\},
\end{equation*}
where $X$ is a topological space. In words, the configuration space $\operatorname{Conf}_n(X)$ is the set of $n$-point tuples of distinct points in $X$. Another similar space is the \emph{Ran space} $\exp_n(X)$, which is the collection of nonempty subsets of $X$ of cardinality at most $n$ equipped with the quotient topology induced by the natural map $X^n \to \exp_n(X)$ defined by $(x_1, \dots, x_n) \mapsto \{x_1\} \cup \cdots \cup \{x_n\}$. There is a substantial body of research on configuration spaces \cite{confspaces_hard_spheres_morse, confspaces_homology, confspaces_lie_algebras, confspaces_introduction, confspaces_unordered_cohomology, confspaces_rational_homotopy, confspaces_surfaces_cohomology, confspaces_sphere_loop_spaces, confspaces_sphere_integral_cohomology, confspaces_varieties} which includes applications to physics \cite{confspaces_disks_infinite_strip, confspaces_hard_disks_computation} and robotics \cite{confspaces_factory, confspaces_braids_robotics}. There are also studies specifically on the Ran spaces of the circle (see the work of Bott \cite{ran-space-S1-old} and Tuffley \cite{ran-space-S1}), including the characterization of the homotopy types of $\exp_n(\Sp^1)$ as a sphere $\Sp^{n}$ if $n$ is odd or $\Sp^{n-1}$ if $n$ is even \cite[Theorem 4]{ran-space-S1}. In fact, the complement of $\exp_{k-2}(\Sp^1)$ in $\exp_{k}(\Sp^1)$ has the homotopy type of a $(k-1,k)$-torus knot complement \cite[Theorem 6]{ran-space-S1}.\\
\indent The study of configuration and Ran spaces has a strong geometric flavor and even allows for very concrete results such as Theorems 4 through 7 in \cite{ran-space-S1}, which, for example, calculate the degree of the  map $\exp_k f: \exp_k(\Sp^1) \to \exp_k(\Sp^1)$ induced by a given map $f:\Sp^1 \to \Sp^1$ and also the action induced by $\exp_{k-1}(\Sp^1) \hookrightarrow \exp_{k}(\Sp^1)$ in $H_{2k-1}$. Tuffley also computed the rational homology of $\exp_k(\Sp^2)$ \cite[Theorem 1]{ran-space-surfaces} and the top two homology groups of $\exp_k(\Sigma)$ for a closed surface $\Sigma$ \cite[Theorem 3]{ran-space-surfaces}. As for curvature sets, \cite{mem12} provides descriptions of $\Kn_3(\Sp^1)$ and $\Kn_3(\Sp^2)$ which we now recall. Notice that a 3-by-3 distance matrix is symmetric and is determined by the entries $d_{12}, d_{23}, d_{31}$, so we can visualize $\Kn_3(X)$ as a subset of $\R^3$. In the case of $\Sp^1$, a set of three points in $\Sp^1$ may or may not be contained in a semicircle. If it is, one of the distances is the sum of the other two. As a consequence, $\Kn_3(\Sp^1)$ contains the three 2-simplices determined by the linear relations $d_{\sigma(1), \sigma(3)} = d_{\sigma(1), \sigma(2)}+d_{\sigma(2), \sigma(3)}$, where $\sigma$ is one of three permutations in $S_3$, and where  $0 \leq d_{ij} \leq \pi$ for $1 \leq i,j \leq 3$. If the set is not contained in a semicircle, then $d_{12}+d_{23}+d_{31}=2\pi$. This adds one more 2-simplex to $\Kn_3(\Sp^1)$, so that $\Kn_3(\Sphere{1})$ is homeomorphic to the boundary of the 3-simplex with vertices $(0,0,0)$, $(\pi, \pi, 0)$, $(\pi, 0, \pi)$, and $(0, \pi, \pi)$; see Figure \ref{fig:K3_S1}. In fact, each of the 2-simplices that we described is the convex hull of its vertices; we give more details in Example \ref{ex:K3_S1}. As for $\Kn_3(\Sp^2)$, any 3 points in general position lie on a plane whose intersection with $\Sp^2$ is a circle $C$ with radius at most 1. While the restriction of the metric of $\Sp^2$ to $C$ is not always geodesic, we do have that $d_{ij}$ is less than or equal to the length of the shortest path in $C$ that joins $x_i$ and $x_j$. With a bit more work, we see that $\Kn_3(\Sp^2)$ actually equals the union of all  $\lambda \cdot \Kn_3(\Sp^1)$, where $0 < \lambda \leq 1$, and that, therefore,  $\Kn_3(\Sp^2)$ is the convex hull of $\Kn_3(\Sp^1)$.

\begin{figure}[h]
	\centering
\begin{tikzpicture}[scale=1.7,
	z={(-0.3cm,-0.3cm)}, %
	line join=round, line cap=round %
]
\draw[dashed] (0,0,0) -- (1,1,0);

\draw (0,0,0) -- (0,1,1);
\draw (0,0,0) -- (1,0,1);

\draw (0,1,1) -- (1,1,0) -- (1,0,1) -- cycle;

\draw[->] (0,0,0) -- (0,0,1.5) node[below left] {\small $d_{23}$};
\draw[dotted, ->] (0,0,0) -- (0,1.3,0) node[above] {\small $d_{31}$};
\draw[dotted, ->] (0,0,0) -- (1.3,0,0) node[right] {\small $d_{12}$};

\coordinate (p1) at (-0.2,1,1);
\coordinate (p2) at (1,0,1);
\coordinate (p3) at (1,1,0);

\coordinate (q1) at (-1.67,1,1);
\coordinate (q2) at ( 2.60,0,1);
\coordinate (q3) at ( 1.8,1,-2.1);

\node (Tp1) [label={[name=Lp1] above: \small $(0,\pi,\pi)$}] at (p1) {};
\node (Tp2) [label={[name=Lp2] below: \small $(\pi,\pi,0)$}] at (p2) {};
\node (Tp3) [label={[name=Lp3]: \small $(\pi,0,\pi)$}] at (p3) {};

\node [label={[name=Lq1] above:
				\small $D_1 =
				\left(
				\begin{smallmatrix}
					  0 &   0 & \pi \\
					  0 &   0 & \pi \\
					\pi & \pi &   0
				\end{smallmatrix}
				\right)$
			}
	  ] at (q1) {};

\node [label={[name=Lq2] below:
				\small $D_2 =
				\left(
				\begin{smallmatrix}
					  0 & \pi & \pi \\
					\pi &   0 &   0 \\
					\pi &   0 &   0
				\end{smallmatrix}
				\right)$
			}
	  ] at (q2) {};

\node [label={[name=Lq3] below:
	\small $D_3 =
	\left(
	\begin{smallmatrix}
		  0 & \pi &   0 \\
		\pi &   0 & \pi \\
		  0 & \pi &   0
	\end{smallmatrix}
	\right)$
}
] at (q3) {};

\draw[|->, semithick] (Lq1) -- (Lp1);
\draw[|->, semithick] (Lq2) -- (Lp2);
\draw[|->, semithick] (Lq3) -- (Lp3);

\draw[<-|, thick] (0.35,0.58,1) -- (0,0.55,1.5) node[anchor=east, align=left] {
	\small
	$\alpha D_1 + (1-\alpha)D_2$
	\\[0.25em]
	$=\left(
	\begin{smallmatrix}
		0 				& (1-\alpha)\pi	& \pi \\
		(1-\alpha)\pi	& 0				& \alpha\pi \\
		\pi				& \alpha\pi		& 0
	\end{smallmatrix}
	\right)$
	};

\def\pt{0.03};
\draw[fill] (0,1,1) circle[radius=\pt];
\draw[fill] (1,0,1) circle[radius=\pt];
\draw[fill] (1,1,0) circle[radius=\pt];

\draw[fill] (0.4,0.6,1) circle[radius=\pt];
\end{tikzpicture}  	\caption{A visualization of $\Kn_3(\Sp^1)$ in $\R^3$. The three simplices incident on the origin correspond to configurations $x_1, x_2, x_3 \in \Sp^1$ contained in a semicircle; all other configurations correspond to the remaining 2-simplex. Every simplex is the convex hull of its vertices. The set $\Kn_3(\Sp^2)$ is the full 3-simplex.}
	\label{fig:K3_S1}
\end{figure}

Perhaps the fundamental result regarding curvature sets, though, was given by Gromov. He proved that curvature sets are a complete invariant of compact metric spaces, which means that two compact metric spaces $X$ and $Y$ are isometric if and only if $\Kn_n(X) = \Kn_n(Y)$ for all $n \geq 2$ \cite[Section 3.27]{gro99}. After that, \cite{mem12} proved that curvature sets are stable under the Gromov-Hausdorff distance, that is,
\begin{equation*}
	\dH^{\R^{n \times n}}(\Kn_n(X), \Kn_n(Y)) \leq 2 \cdot \dGH(X,Y),
\end{equation*}
where $\dH^{\R^{n \times n}}$ is the Hausdorff distance between subsets of $\R^{n \times n}$ equipped with the $\ell^\infty$ metric and $\dGH$ is the Gromov-Hausdorff distance between metric spaces. That paper also has polynomial-time lower bounds for the Gromov-Hausdorff distance that can be computed from curvature sets.\\
\indent Other than that, however, it doesn't seem that the geometric or topological study of curvature sets -- which we regard as a geometry enriched configuration space -- has received much attention and this paper is a first step in that direction. We give several characterizations and simplicial models of the curvature sets of $\Sp^1$. The main model is as a quotient of the torus $\T^{n}$ by the diagonal action of the group $O(2)$ of isometries of the circle. We also characterize the homotopy type of several subsets of $\Kn_{n}(\Sp^1)$ and their intersection. In particular, $\Kn_n(\Sp^1)$ is the union of two open subsets each of which is homotopy equivalent to $\Kn_{n-1}(\Sp^1)$ -- a fact  which allows us to use the Mayer-Vietoris sequence and and an induction argument for computing the homology groups of $\Kn_n(\Sp^1)$.
\begin{restatable}{theorem}{HomologyKn}\label{thm:main}
	\[
	H_m\big(\Kn_n(\Sp^1),\mathbb{Z}\big) = 
	\begin{cases}
		\mathbb{Z}& m=0,\\
		\mathbb{Z}^{\binom{n-1}{m}} \oplus (\mathbb{Z} / 2\mathbb{Z})^{\sum_{i=0}^{n-m-2} \binom{n-1}{i}} & m\ \mathrm{even},\ \mathrm{and}\ 2\leq m \leq n-1, \\
		0 &\mathrm{else}.
	\end{cases}
	\]
\end{restatable}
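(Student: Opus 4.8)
The plan is to argue by induction on $n$, using the Mayer--Vietoris sequence of the open cover $\Kn_n(\Sp^1)=U\cup V$ in which $U\simeq V\simeq\Kn_{n-1}(\Sp^1)$. The base cases $n=2$ ($\Kn_2(\Sp^1)=[0,\pi]$, contractible) and $n=3$ ($\Kn_3(\Sp^1)\cong\Sp^2$) are immediate. For the inductive step I would use the model $\Kn_n(\Sp^1)\cong\T^{n}/O(2)$, identified with $\T^{n-1}/\iota$ for the inversion $\iota(y)=-y$; writing $\T^{n-1}=\T^{n-2}\times\Sp^1$ and covering the last circle by two $\iota$-invariant arcs $A_\pm$, each containing one of the two $\iota$-fixed points, one takes $U,V$ to be the images of $\T^{n-2}\times A_+$ and $\T^{n-2}\times A_-$. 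Collapsing the arc directions gives $\iota$-equivariant deformation retractions of $U$ and $V$ onto $\T^{n-2}/\iota\cong\Kn_{n-1}(\Sp^1)$, while $U\cap V$ is the image of $\T^{n-2}\times(A_+\cap A_-)$, on which $\iota$ interchanges the two components of $A_+\cap A_-$; hence $U\cap V\simeq\T^{n-2}$, and under these identifications \emph{both} inclusions $U\cap V\hookrightarrow U$, $U\cap V\hookrightarrow V$ become, up to homotopy, the quotient map $q\colon\T^{n-2}\to\T^{n-2}/\iota$. Thus the Mayer--Vietoris map $\phi_m\colon H_m(\T^{n-2})\to H_m(\Kn_{n-1}(\Sp^1))^{\oplus2}$ is $\alpha\mapsto(q_*\alpha,-q_*\alpha)$, so $\ker\phi_m=\ker q_*$ and $\operatorname{coker}\phi_m\cong H_m(\Kn_{n-1}(\Sp^1))\oplus\operatorname{coker}q_*$.

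The technical heart is the integral computation of $q_*\colon H_m(\T^{n-2};\Z)\to H_m(\T^{n-2}/\iota;\Z)=H_m(\Kn_{n-1}(\Sp^1);\Z)$, which I would carry through the induction as a strengthened hypothesis: $q_*$ is an isomorphism in degree $0$; it is the zero map in odd degrees (target zero); and in each even degree $m\ge2$ it is injective with cokernel $(\Z/2\Z)^{\binom{n-2}{m}}\oplus T$, where $T$ is the torsion subgroup of $H_m(\Kn_{n-1}(\Sp^1))$ --- equivalently, $q_*$ is multiplication by $2$ onto a torsion-complement. The factor $2$ appears one coordinate sub-torus at a time: for $I\subseteq\{1,\dots,n-2\}$ with $|I|=m$ even, $q$ restricts to a degree-$2$ branched covering $\T^m_I\to\T^m_I/\iota$ onto a closed orientable $\Q$-homology manifold, its branch locus (the $2^m$ fixed points) having codimension $m\ge2$, so it sends the fundamental class to twice the fundamental class. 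Since $q_*\otimes\Q$ is an isomorphism in even degrees (standard transfer for the $\Z/2\Z$-action), the images of the $\T^m_I/\iota$ rationally span $H_m(\T^{n-2}/\iota;\Q)$; the remaining point --- that these classes actually form a $\Z$-basis of a torsion-complement --- is the step I expect to demand the most care, and I would prove it by feeding the inductively known structure of $H_*(\Kn_{n-1}(\Sp^1))$ and of $q_*$ one level down through the analogous short exact sequence from the previous step, tracking separately the $\T^m_I$ with $n-2\notin I$ (which come from $\Kn_{n-2}(\Sp^1)\hookrightarrow\Kn_{n-1}(\Sp^1)$) and those with $n-2\in I$ (which map, under the Mayer--Vietoris boundary, to a basis of the free group $H_{m-1}(\T^{n-3})$).

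With $\phi_m$ understood, exactness of Mayer--Vietoris gives
\[
0\longrightarrow\operatorname{coker}\phi_m\longrightarrow H_m\big(\Kn_n(\Sp^1)\big)\longrightarrow\ker\phi_{m-1}\longrightarrow0,
\]
and since $\ker\phi_{m-1}$ is a subgroup of the free abelian group $H_{m-1}(\T^{n-2};\Z)$, hence free, the sequence splits. For $m$ even with $2\le m\le n-1$ this yields $H_m(\Kn_n(\Sp^1))\cong\operatorname{coker}\phi_m\oplus H_{m-1}(\T^{n-2};\Z)$: here $\operatorname{coker}\phi_m$ has free rank $\binom{n-2}{m}$ and $\Z/2\Z$-rank $\binom{n-2}{m}+2\,t_{n-1,m}$, with $t_{n-1,m}$ the $\Z/2\Z$-rank of $H_m(\Kn_{n-1}(\Sp^1))$ from the induction hypothesis, while $H_{m-1}(\T^{n-2};\Z)\cong\Z^{\binom{n-2}{m-1}}$; for all other $m$ the same bookkeeping returns $\Z$ in degree $0$ and $0$ elsewhere. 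The proof then closes by verifying $\binom{n-2}{m}+\binom{n-2}{m-1}=\binom{n-1}{m}$ and $\binom{n-2}{m}+2\sum_{i=0}^{n-m-3}\binom{n-2}{i}=\sum_{i=0}^{n-m-2}\binom{n-1}{i}$ --- both from Pascal's rule --- so that the free and $\Z/2\Z$ ranks match the asserted formula, and by re-deriving the strengthened statement about $q_*$ at level $n$ via naturality of Mayer--Vietoris for the map of covers $\{\T^{n-2}\times A_\pm\}\to\{U,V\}$. To summarize the difficulty: everything reduces to a diagram chase and two binomial identities \emph{once} the integral behavior of the inclusion-induced maps $q_*$ --- in particular the torsion-complement basis claim --- has been established, and securing that forces the induction hypothesis to be considerably stronger than the theorem itself.
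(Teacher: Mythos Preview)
Your proposal is correct and follows essentially the same route as the paper: the same Mayer--Vietoris cover of $\Kn_n(\Sp^1)\cong\T^{n-1}/\langle\rho\rangle$ by two pieces homotopy equivalent to $\Kn_{n-1}(\Sp^1)$ with intersection $\simeq\T^{n-2}$, the same reduction to understanding the quotient map $q_*$ on homology, the same dichotomy ($q_*=0$ in odd degree, injective ``multiplication by $2$'' in even degree), and even the same inductive proof of injectivity by splitting the sub-torus classes according to whether the last index is present and using the Mayer--Vietoris boundary $\partial_*$ on one piece and induction on the other. The only cosmetic difference is that the paper establishes the factor $2$ by an explicit cellular chain computation (Lemma~\ref{lemma:q_injective_or_0}), whereas you phrase it as a degree-$2$ branched-cover argument on each coordinate sub-torus; both give the same conclusion and feed into the same recursion.
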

Note that when $n=3$, the homology groups coincide with those of $\Sp^2$, which is consistent with the fact that $\Kn_3(\Sp^1)\cong \Sp^2$ as described above. Note also that when $n>3$, the number of generators of the torsion subgroups can range from $1$ to $2^{n-1}$ (minus a quadratic polynomial in $n$).\\

\indent We also construct an abstract simplicial complex that generalizes the intuition in Figure \ref{fig:K3_S1}. We call it the \emph{State Complex} $\St_n(\Sp^1)$. In Figure \ref{fig:K3_S1}, $\Kn_3(\Sp^1)$ is shown to have four 2-simplices. Three of these correspond to sets of 3 points that are contained in a semicircle, and the simplex is determined by the ordering of the points. The last simplex corresponds to those sets that are not contained in a semicircle. Intuitively, $\St_n(\Sp^1)$ contains two types of $(n-1)$-simplices: some correspond to an ordering of $n$ points in a semicircle, while others give the ways that such an ordered set might fail to be contained in a semicircle (e.g. when every point but the 4th lies on the same semicircle). Moreover, the vertices of $\St_n(\Sp^1)$ are points $(x_1, \dots, x_n) \in \T^n$ such that every $x_i$ is equal or antipodal to $x_1$ and, for each simplex $A \in \St_n(\Sp^1)$, $\Kn_n(\Sp^1)$ contains the convex hull of the distance matrices of the vertices of $A$. For instance, the vertices $D_1$ and $D_2$ of $\Kn_3(\Sp^1)$, as depicted  in Figure \ref{fig:K3_S1},  correspond to the distance matrices of  tuples of the form $(x,x,-x)$ and $(x,-x,-x)$, respectively, for  $x \in \Sp^1$, and the edge between $D_1$ and $D_2$ is the convex hull of $\{D_1, D_2\}$.  In fact, we prove the following:
\begin{restatable}{theorem}{GeometricRealizationStn}\label{thm:geometric_realization_intro}
    $\displaystyle
	\Kn_n(\Sp^1) = \bigcup_{A \in \St_n(\Sp^1)} \conv(D_n(A)) \cong |\St_n(\Sp^1)|.
    $
\end{restatable}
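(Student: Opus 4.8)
The plan is to exhibit the canonical simplicial map extending $v\mapsto D_n(v)$ on vertices and prove that it is a continuous bijection onto $\Kn_n(\Sp^1)$; compactness then upgrades it to a homeomorphism and forces the equality of sets. Concretely, define $\psi\colon|\St_n(\Sp^1)|\to\R^{n\times n}$ by sending each vertex $v$ of $\St_n(\Sp^1)$ to $D_n(v)$ and extending affinely in barycentric coordinates over each simplex $A$. On the intersection of two simplices both affine extensions restrict to the affine extension of $D_n$ over the common vertices, so $\psi$ is well defined and continuous, and by the defining property of $\St_n(\Sp^1)$ its image is $\bigcup_{A}\conv(D_n(A))\subseteq\Kn_n(\Sp^1)$. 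Since $\St_n(\Sp^1)$ is a finite complex, $|\St_n(\Sp^1)|$ is compact, and $\Kn_n(\Sp^1)\subseteq\R^{n\times n}$ is Hausdorff; hence it suffices to prove that $\psi$ is a bijection onto $\Kn_n(\Sp^1)$.

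\emph{Surjectivity.} I would first decompose $\Kn_n(\Sp^1)$ into polytopal cells. Given $M=D_n(x_1,\dots,x_n)$, the points cut $\Sp^1$ into arcs $a_1,\dots,a_n\ge 0$ with $\sum_i a_i=2\pi$ in a cyclic order $\sigma$ that can be recovered from $M$, and for each pair $i,j$ the distance $d_{ij}$ equals either the forward arc-sum $A_{ij}$ or $2\pi-A_{ij}$; record these choices in a sign pattern $\epsilon$. On the set of configurations with a fixed admissible pair $(\sigma,\epsilon)$ the map $(a_1,\dots,a_n)\mapsto M$ is the restriction of an affine map, so its image is a polytope $P_{\sigma,\epsilon}$, and I would identify $P_{\sigma,\epsilon}$ with $\conv(D_n(A))$ for a simplex $A=A_{\sigma,\epsilon}\in\St_n(\Sp^1)$: every vertex of $P_{\sigma,\epsilon}$ is the image of a $0$-dimensional face of the defining polytope in $a$-space, at which all but at most two clusters of points collapse and the surviving two are antipodal, so the associated $n$-tuple is a vertex of $\St_n(\Sp^1)$, and one checks the vertices thus obtained are exactly those of $A_{\sigma,\epsilon}$. (When $\max_i a_i\ge\pi$ the points lie in a semicircle and one obtains the semicircle-type simplices; otherwise the non-semicircle ones.) Thus $M\in\conv(D_n(A_{\sigma,\epsilon}))=\psi(A_{\sigma,\epsilon})$, giving $\Kn_n(\Sp^1)\subseteq\operatorname{Im}\psi$, which with the reverse inclusion above yields the set equality.

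\emph{Injectivity.} There are two points: $\psi|_A$ is injective for each $A$, and distinct simplices meet only along their common face. For the first, one shows $\{D_n(v):v\in A\}$ is affinely independent; for a semicircle-type simplex with vertices $0$ and $M^{\sigma}_1,\dots,M^{\sigma}_{n-1}$, where $M^{\sigma}_i$ places $x_{\sigma(1)},\dots,x_{\sigma(i)}$ at one point and $x_{\sigma(i+1)},\dots,x_{\sigma(n)}$ at its antipode, this is immediate because, on the coordinates indexed by the consecutive pairs $(\sigma(k),\sigma(k+1))$, the matrix $[\,M^{\sigma}_i\,]_{i,k}$ equals $\pi$ times the identity; the non-semicircle case is an analogous finite computation. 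For the second, if $M$ lies in the image of the interiors of simplices $A$ and $B$, I would recover from $M$ the cyclic order, whether the configuration lies in a semicircle, and the sign pattern $\epsilon$ — hence the common cell $P_{\sigma,\epsilon}$ — and then use that inside one cell the barycentric coordinates of $M$ are the solution of a linear system, so the preimages of $M$ under $\psi|_A$ and $\psi|_B$ coincide. Since a continuous bijection from a compact space to a Hausdorff space is a homeomorphism, the proof is complete.

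The main obstacle is the combinatorial analysis of the non-semicircle cells: determining exactly which sign patterns $\epsilon$ are realizable, proving each $P_{\sigma,\epsilon}$ is a non-degenerate simplex whose vertex set is that of a simplex of $\St_n(\Sp^1)$, and — for injectivity — handling the configurations supported on two antipodal points or having an arc of length exactly $\pi$, which lie in the closures of several cells and where the reconstruction of $(\sigma,\epsilon)$ from $M$ must be carried out carefully.
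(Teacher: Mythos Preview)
Your strategy---define the affine extension $\psi$, prove it is a continuous bijection, and invoke compactness---is exactly the paper's strategy, and the ingredients you list (affine independence of the vertex images, simplices meeting along faces, and a polytopal decomposition of $\Kn_n(\Sp^1)$) are precisely the ones the paper assembles. The difference lies in how the cells are indexed. You parametrize a configuration by its cyclic order $\sigma$, the arc lengths $a_1,\dots,a_n$, and the pairwise sign pattern $\epsilon$, and then split into the semicircle and non-semicircle cases. The paper instead folds each configuration into a semicircle via $\widetilde x_i:=\sigma(x_i/x_1)\,x_i$ and encodes everything in a single object, a \emph{cluster structure} $\mathfrak{c}:\{1,\dots,n\}\to\{\pm1,\dots,\pm m\}$, together with a map $\Phi_{\mathfrak c}:\Delta_{m-1}\to\T^n$. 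This buys a uniform treatment: the same formula shows $D_n\circ\Phi_{\mathfrak c}$ is affine and injective (hence the vertices are affinely independent) without any semicircle/non-semicircle dichotomy, and the face/intersection analysis reduces to a single restriction operation $\mathfrak c\mapsto\mathfrak c_I$.

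The obstacle you flag is real, and the paper's encoding is what dissolves it. Two points in particular: first, the datum $(\sigma,\epsilon)$ cannot be recovered from $M$ outright, only up to the reflection $\rho$; the paper tracks this explicitly via an involution $\mathfrak c\mapsto\rho\cdot\mathfrak c$ and shows $V(\rho\cdot\mathfrak c)=V(\mathfrak c)$, which is exactly what makes the ``unique minimal simplex containing $M$'' argument go through. Second, your claim that each $P_{\sigma,\epsilon}$ is a simplex with vertex set equal to that of some $A\in\St_n(\Sp^1)$ is the crux and is not automatic from your description (for instance, the bare non-semicircle region in arc-length coordinates is a hypersimplex, not a simplex, until the sign pattern $\epsilon$ cuts it further). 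In the paper this is a one-line consequence of the injectivity of $D_n\circ\Phi_{\mathfrak c}$. So your outline is sound, but to complete it cleanly you would end up reinventing the cluster-structure bookkeeping; that is where the paper's contribution lies.
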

\noindent We prove this Theorem at the end of Section \ref{subsec:geometric_realization}.\\
\indent We conclude the paper by identifying a precise relationship between $\Kn_n(\Sp^1)$ and the $n$-th \emph{elliptope}, a basic object studied in semidefinite optimization. Our results about the homology groups of $\Kn_n(\Sp^1)$ provide a measure of complexity of the elliptopes.

\subsubsection*{Organization.}
We start by setting up the relevant notation in Section \ref{sec:notation}. After that, the paper has two main sections: Section \ref{sec:homology} on the homology groups of $\Kn_n(\Sp^1)$ and Section \ref{sec:state_complex} on the state complex $\St_n(\Sp^1)$. Section \ref{sec:homology} shows that the distance matrix map $D_n:\T^n \to \Kn_n(\Sp^1)$ induces the quotient topology induced by the diagonal action of $O(2)$ on $\T^n$. In Section \ref{sec:Kn^(t)}, we study the homotopy type of certain subsets of $\Kn_n(\Sp^1)$ that we later use to compute the homology groups $H_m(\Kn_n(\Sp^1))$ in Section \ref{sec:homology_computation}. Before the final computation, we give $\Kn_n(\Sp^1)$ a CW structure and compute the action induced by several maps in Section \ref{sec:cellular_structure} in chain groups and homology.

In the second part, we start by defining cluster structures in Section \ref{subsec:cluster_structures}. A cluster structure is a function that determines the order of points of $\mathbf{x} \in \T^n$ when normalized to a semicircle and indicates which components of $\mathbf{x}$ do not lie in that semicircle. Given a cluster structure $\mathfrak{c}$, we define a function $\Phi_{\mathfrak{c}}:\Delta_{m-1} \to \T^n$ such that the set $\Im(D_n \circ \Phi_{\mathfrak{c}})$ is the convex hull of a set of $m$ points. In Section \ref{subsec:state_complex}, we work out the technical details to verify that $\St_n(\Sp^1)$ is an abstract simplicial complex. We prove $\Kn_n(\Sp^1) \cong |\St_n(\Sp^1)|$ in Section \ref{subsec:geometric_realization}. We also find the number of simplices and the Euler characteristic of $\St_n(\Sp^1)$ in Section \ref{sec:combinatorics_St_n}. We conclude by mentioning the connections with elliptopes and possible future directions in Section \ref{sec:elliptopes}. We also include an Appendix with an explicit calculation of $\St_3(\Sp^1)$ to illustrate the constructions in Section \ref{sec:state_complex}.

\subsubsection*{Acknowledgements.}
This work was funded by the grants NSF-IIS-1901360, NSF-CCF-1740761, NSF-CCF-1839358, and BSF-2020124. This paper emerged from a project ran during the 2017  Summer at ICERM 2017 resarch program \url{https://icerm.brown.edu/summerug/2017/}. The authors would also like to thank Dave Anderson for the helpful discussions during which the connection of curvature sets with elliptopes arose.

\counterwithin{theorem}{section}

\section{Notation.}
\label{sec:notation}

\indent Given a topological space $X$, a boldface letter $\mathbf{x}$ represents a vector $\mathbf{x} := (x_1, \dots, x_n) \in X^n$. Given a function $f:X \to X$, we also use $f$ to denote the product map $X^n \to X^n$ that sends $\mathbf{x} = (x_1, \dots, x_n)$ to $f(\mathbf{x}) := (f(x_1), \dots, f(x_n))$. We use $\R^{n \times n}$ to denote the space of $n$-by-$n$ real matrices with the usual topology. Given $M \in \R^{n \times n}$, we write $M_{i*} \in \R^m$ for the $i$-th row of and $M_{*j} \in \R^n$ for the $j$-th column of $M$.\\
\indent For concreteness, we view $\Sp^1$ as the unit circle in $\C$. We write $\arg:\C \setminus \{0\} \to [0,2 \pi)$ for the argument function. We follow the usual convention that $\arg(z)$ increases in the anticlockwise direction. We denote the geodesic metric on $\Sp^1$ simply as $d$, where
\begin{equation*}
	\textstyle
	d(x,y) := \min\left(\arg(\frac{x}{y}), \arg(\frac{y}{x}) \right).
\end{equation*}
Let $\rho:\Sp^1 \to \Sp^1$ be $\rho(z) := z^{-1}$, the reflection along the real axis in $\C$. We denote the group of orthogonal transformations of $\Sp^1$ as $O(2)$. The $n$-torus is the product space $\T^n := (\Sp^1)^n$. For the rest of the paper, the map $D_n$ will denote the map $D_n:\T^n \to \Kn_n(\Sp^1)$ such that $M = D_n(\mathbf{x})$ is the matrix where $M_{ij} := d(x_i,x_j)$. 

\section{Homology groups of $\Kn_n(\Sp^1)$.}
\label{sec:homology}

The objective of this section is to compute the homology groups of $\Kn_n(\Sp^1)$. We start by showing that the defining map $D_n:\T^n \to \Kn_n(\Sp^1)$ is a quotient map under the action of $O(2)$.

\subsection{$\Kn_n(\Sp^1)$ as a quotient of  $\T^n$.}
\label{sec:quotient_of_torus}
\indent Observe that the distance matrices $D_n(\mathbf{x})$ and $D_n(\tau(\mathbf{x}))$ are equal for any isometry $\tau \in O(2)$. We show the converse in this section: if $D_n(\mathbf{x}) = D_n(\mathbf{y})$, then there exists $\tau \in O(2)$ such that $\mathbf{y} = \tau(\mathbf{x})$.

\begin{lemma}
	\label{lemma:3point_isometry}
	Let $x_1, x_2, x_3 \in \Sp^1$ and $x_1',x_2',x_3' \in \Sp^1$ such that $d(x_i,x_j) = d(x_i',x_j')$. Suppose $d(x_1,x_2) \neq 0, \pi$. Let $\tau \in O(2)$ such that $\tau(x_1) = x_1'$ and $\tau(x_2) = x_2'$. Then $\tau(x_3) = x_3'$.
\end{lemma}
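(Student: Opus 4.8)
The plan is to normalize by $\tau$ and then exploit the fact that a geodesic circle on $\Sp^1$ has at most two points, placed symmetrically about the diameter through its center. The fact I will use repeatedly is: for $a,b,c\in\Sp^1$, writing $b=a\,e^{iu}$ and $c=a\,e^{iv}$, one has $d(a,b)=d(a,c)$ if and only if $v\equiv u\pmod{2\pi}$ or $v\equiv -u\pmod{2\pi}$. This follows from $d(a,a\,e^{it})=\min(t,2\pi-t)$ for $t\in[0,2\pi)$: the sphere of radius $r\in(0,\pi)$ about $a$ is the two-point set $\{a\,e^{ir},a\,e^{-ir}\}$ (and a single point when $r\in\{0,\pi\}$), and $t\mapsto\min(t,2\pi-t)$ has fiber $\{t,2\pi-t\}$.

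I would then reduce to $\tau=\mathrm{id}$. Since $\tau$ is an isometry, replacing each $x_i'$ by $\tau^{-1}(x_i')$ preserves all pairwise distances and turns $\tau(x_1)=x_1'$, $\tau(x_2)=x_2'$ into $x_1'=x_1$, $x_2'=x_2$, while the desired conclusion $\tau(x_3)=x_3'$ becomes $x_3=\tau^{-1}(x_3')$. So it suffices to show: if $x_1'=x_1$, $x_2'=x_2$, $d(x_1,x_3)=d(x_1,x_3')$, $d(x_2,x_3)=d(x_2,x_3')$ and $d(x_1,x_2)\notin\{0,\pi\}$, then $x_3=x_3'$. (Existence of $\tau$ is assumed in the statement; it also follows from $d(x_1,x_2)=d(x_1',x_2')$ and transitivity of $O(2)$ on pairs of points at a fixed distance.)

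For the reduced claim, write $x_2=x_1\,e^{i\beta}$, $x_3=x_1\,e^{iu}$, $x_3'=x_1\,e^{iu'}$; the hypothesis $d(x_1,x_2)\notin\{0,\pi\}$ is exactly $\beta\not\equiv 0\pmod\pi$. Applying the fact with center $x_1$ gives $u'\equiv u$ or $u'\equiv -u\pmod{2\pi}$. In the first case $x_3'=x_3$ and we are done. In the second case we may also assume $u\not\equiv 0\pmod\pi$, since otherwise $u'\equiv -u\equiv u$. Now apply the fact with center $x_2$, using $x_3=x_2\,e^{i(u-\beta)}$ and $x_3'=x_2\,e^{i(-u-\beta)}$: this gives $-u-\beta\equiv u-\beta\pmod{2\pi}$ or $-u-\beta\equiv -(u-\beta)\pmod{2\pi}$. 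The first congruence gives $2u\equiv 0$, hence $u\equiv 0\pmod\pi$, contradicting $u\not\equiv 0\pmod\pi$; the second gives $2\beta\equiv 0$, hence $\beta\equiv 0\pmod\pi$, contradicting $\beta\not\equiv 0\pmod\pi$. So the second case cannot occur, and $x_3'=x_3$.

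I do not anticipate a real obstacle; the argument is elementary and the only thing needing attention is the bookkeeping between congruences mod $2\pi$ and mod $\pi$, together with making sure both excluded values of $d(x_1,x_2)$ are genuinely used — they are precisely what rules out the reflected alternative for $x_3'$. (Both exclusions are in fact necessary: for $d(x_1,x_2)=\pi$, take $x_1=1$, $x_2=-1$, $x_3=i$, $x_3'=-i$ with $\tau=\mathrm{id}$, and all the distance hypotheses hold while $\tau(x_3)\neq x_3'$.)
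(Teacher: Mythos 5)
Your proof is correct, and at its core it is the paper's argument written additively in angles rather than multiplicatively in ratios: your key fact that $d(a,b)=d(a,c)$ forces the angle to agree up to sign is exactly the paper's dichotomy $\frac{x_i}{x_j}=\frac{x_i'}{x_j'}$ or $\frac{x_i}{x_j}=\frac{x_j'}{x_i'}$, and your contradictions $2u\equiv 0$ and $2\beta\equiv 0 \pmod{2\pi}$ correspond to the paper's $(\tfrac{x_1}{x_2})^2=1$, excluded by $d(x_1,x_2)\neq 0,\pi$. The one genuine structural difference is your up-front normalization replacing $x_i'$ by $\tau^{-1}(x_i')$, which reduces everything to the case $\tau=\mathrm{id}$ and thereby absorbs the paper's separate case $\frac{x_2}{x_1}=\frac{x_1'}{x_2'}$ (handled there by passing to $1/x_i'$ and $\rho\circ\tau$); this buys a single uniform computation at the cost of nothing, and your remark that the exclusion $d(x_1,x_2)=\pi$ is genuinely necessary (e.g. $x_1=1$, $x_2=-1$, $x_3=i$, $x_3'=-i$) is a worthwhile addition the paper does not make explicit.
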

\begin{proof}
	First, observe that any $\tau \in O(2)$ is determined by its action on a basis of $\C$ as an $\R$-vector space. Since $d(x_1,x_2) \neq 0, \pi$, the set $\{x_1, x_2\}$ is a basis, so $\tau$ is determined by $\tau(x_1)=x_1'$ and $\tau(x_2)=x_2'$. Now, observe that $d(x_i,x_j)=d(x_i',x_j')$ if and only if $\frac{x_i}{x_j} = \frac{x_i'}{x_j'}$ or $\frac{x_i}{x_j} = \frac{x_j'}{x_i'}$. If $\frac{x_2}{x_1} = \frac{x_2'}{x_1'}$, then $\tau(z) := \frac{x_1'}{x_1} \cdot z$ is unique such that $\tau(x_1) = x_1'$ and $\tau(x_2) = x_2'$. We claim that $\frac{x_3'}{x_2'} = \frac{x_3}{x_2}$ and $\frac{x_3'}{x_1'} = \frac{x_3}{x_1}$. In fact, the equation $\frac{x_2}{x_1} \frac{x_3}{x_2} \frac{x_1}{x_3} = 1 = \frac{x_2'}{x_1'} \frac{x_3'}{x_2'} \frac{x_1'}{x_3'}$, which simplifies to $\frac{x_3}{x_2} \frac{x_1}{x_3} = \frac{x_3'}{x_2'} \frac{x_1'}{x_3'}$, implies that $\frac{x_3'}{x_2'} = \frac{x_3}{x_2}$ if and only if $\frac{x_3'}{x_1'} = \frac{x_3}{x_1}$. If we had the opposite equalities $\frac{x_3'}{x_2'} = \frac{x_2}{x_3}$ and $\frac{x_3'}{x_1'} = \frac{x_1}{x_2}$, then the equation
	\begin{equation*}
		\frac{x_3}{x_2} \frac{x_1}{x_3} = \frac{x_3'}{x_2'} \frac{x_1'}{x_3'} = \frac{x_2}{x_3} \frac{x_3}{x_1}
	\end{equation*}
	gives $(\frac{x_1}{x_2})^2 = 1$. However, $x_2 \neq \pm x_1$, so we must have $\frac{x_3'}{x_2'} = \frac{x_3}{x_2}$ and $\frac{x_3'}{x_1'} = \frac{x_3}{x_1}$. Therefore, $\tau(x_3) = \frac{x_1'}{x_1} \cdot x_3 = x_1' \cdot \frac{x_3'}{x_1'} = x_3'$.\\
	\indent If we had $\frac{x_2}{x_1} = \frac{x_1'}{x_2'}$ instead, the conditions of the lemma still apply to $1/x_i'$ and $\rho \circ \tau$ because $\rho$ is an isometry. The paragraph above implies that $\rho \circ \tau(x_3) = 1/x_3'$, which gives $\tau(x_3) = x_3'$.
\end{proof}

\begin{lemma}
	\label{lemma:dm_up_to_O2}
	For all $\mathbf{x},\mathbf{y} \in \T^n$ such that $D_{n}(\mathbf{x}) = D_{n}(\mathbf{y})$, there exists some $\tau \in O(2)$ such that $\tau(\mathbf{x}) = \mathbf{y}$.
\end{lemma}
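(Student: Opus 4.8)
The plan is to split into two cases according to whether $\mathbf{x}$ has a pair of coordinates at distance different from $0$ and $\pi$; in the first (generic) case everything reduces to Lemma~\ref{lemma:3point_isometry}, and the second is an easy combinatorial argument about antipodal points.

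\textbf{Generic case.} Suppose some pair of coordinates of $\mathbf{x}$ is neither equal nor antipodal; permuting coordinates (which commutes with $D_n$ and with the diagonal $O(2)$-action), we may assume $d(x_1,x_2)\neq 0,\pi$. Since $d(x_1',x_2')=d(x_1,x_2)$, the points $x_1',x_2'$ are also neither equal nor antipodal. I would first produce $\tau\in O(2)$ with $\tau(x_1)=x_1'$ and $\tau(x_2)=x_2'$: choose a rotation sending $x_1$ to $x_1'$; then its image of $x_2$ and the point $x_2'$ are two points at the same positive, non-$\pi$ distance from $x_1'$, hence equal or interchanged by the reflection of $\Sp^1$ fixing $x_1'$, so post-composing with that reflection if necessary yields the desired $\tau$. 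With this $\tau$ fixed, apply Lemma~\ref{lemma:3point_isometry} to the triple $(x_1,x_2,x_k)$ for each $k=3,\dots,n$ — its hypotheses ($d(x_1,x_2)\neq 0,\pi$, $\tau(x_1)=x_1'$, $\tau(x_2)=x_2'$, and $d(x_i,x_j)=d(x_i',x_j')$) hold — to conclude $\tau(x_k)=x_k'$. Together with $\tau(x_1)=x_1'$ and $\tau(x_2)=x_2'$ this gives $\tau(\mathbf{x})=\mathbf{y}$.

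\textbf{Degenerate case.} Suppose $d(x_i,x_j)\in\{0,\pi\}$ for all $i,j$. Since $d(x,y)=0$ iff $x=y$ and $d(x,y)=\pi$ iff $x=-y$, every $x_i$ lies in $\{x_1,-x_1\}$, and the same holds for $\mathbf{y}$ because $D_n(\mathbf{y})=D_n(\mathbf{x})$. Write $A=\{i:x_i=x_1\}$, $B=\{i:x_i=-x_1\}$, and $A',B'$ for the analogous partition of $\mathbf{y}$. Equality of the distance matrices says exactly that $i,j$ lie on the same side of $(A,B)$ iff they lie on the same side of $(A',B')$; since $1\in A\cap A'$, this forces $A=A'$ and $B=B'$. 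Now any rotation $\tau$ with $\tau(x_1)=x_1'$ automatically satisfies $\tau(-x_1)=-x_1'$, hence $\tau(x_i)=x_i'$ for every $i$, i.e.\ $\tau(\mathbf{x})=\mathbf{y}$.

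The two cases are exhaustive, so this completes the argument. The substantive content — propagating a match on two coordinates to all of them — is entirely carried by Lemma~\ref{lemma:3point_isometry}; I do not expect a genuine obstacle here, only the need to verify cleanly that the required $\tau\in O(2)$ exists in the generic case and to handle the bookkeeping of the antipodal partition in the degenerate case.
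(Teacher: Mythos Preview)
Your proof is correct and follows essentially the same approach as the paper: split into the generic case (some distance $\neq 0,\pi$, construct $\tau$ matching two coordinates, then propagate via Lemma~\ref{lemma:3point_isometry}) and the degenerate case (all coordinates lie in $\{x_1,-x_1\}$, take the rotation sending $x_1$ to $y_1$). The only organizational difference is that the paper first isolates the $n=2$ case and then cites it to obtain $\tau$ in the generic case, whereas you construct $\tau$ directly by a rotation-then-optional-reflection argument; both produce the same $\tau$.
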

\begin{proof}
	We first consider the case of $n=2$. $D_2(\mathbf{x})=D_2(\mathbf{y})$ implies $d(x_1,x_2) = d(y_1,y_2)$ and, with that, $\frac{x_1}{x_2} = \frac{y_1}{y_2}$ or $\frac{x_1}{x_2} = \frac{y_2}{y_1}$. In the first case, $\tau(z) := \frac{y_1}{x_1} \cdot z$ satisfies $\tau(\mathbf{x}) = \tau(\mathbf{y})$. If $\frac{x_1}{x_2} = \frac{y_2}{y_1}$, define $\tau(z) := \frac{1/y_1}{x_1} z$. This time, $\rho \circ \tau$ satisfies $\rho \circ \tau(\mathbf{x}) = \rho \circ \tau(\mathbf{y})$.\\
	\indent Now, let $n>2$. If all the entries of $D_{n}(\mathbf{x}) = D_n(\mathbf{y})$ are either $0$ or $\pi$, we must have $x_i = \varepsilon_i x_1$ and $y_i = \varepsilon_i y_1$, where $\varepsilon_i \in \{+1, -1\}$. Then $\tau(z) := \frac{y_1}{x_1} \cdot z$ satisfies $\tau(\mathbf{x}) = \mathbf{y}$. Otherwise, pick $i_1, i_2$ such that $d(x_{i_1}, x_{i_2}) \ne 0, \pi$. By the case $n=2$, there exists $\tau \in O(2)$ such that $\tau(x_{i_1}) = y_{i_1}$ and $\tau(x_{i_2}) = y_{i_2}$. Then by Lemma \ref{lemma:3point_isometry}, for any $i \neq i_1, i_2$ we have $\tau(x_i) = y_i$.
\end{proof}

With these lemmas, we can show that the quotient by $D_n$ is induced by $O(2)$.
\begin{prop}
	\label{prop:quotient_of_big_torus}
	$D_n:\T^n \to \Kn_n(\Sp^1)$ is the open quotient map under the diagonal action of $O(2)$. Hence $\Kn_n(\Sp^1) \cong \T^n / O(2)$.
\end{prop}
\begin{proof}
	Since every coordinate function $d(x_i,x_j)$ is continuous, $D_n$ is as well and, by Lemma \ref{lemma:dm_up_to_O2}, $D_n(\mathbf{x}) = D_n(\mathbf{y})$ if and only if there exists $\tau \in O(2)$ such that $\tau(\mathbf{x}) = \mathbf{y}$. Since $D_n$ is also surjective, it is a quotient map and $\Kn_n(\Sp^1) \cong \T^n / O(2)$. The quotient is induced by a group action, so $D_n$ is open.
\end{proof}

In fact, we can see $\Kn_n(\Sp^1)$ as a quotient of $\T^{n-1}$ instead by modding out the translations in $O(2)$. For concreteness, define $i_{n} : \T^{n-1} \to \T^{n}$ by $i_n(x_1, \ldots ,x_{n-1}) := (1, x_1, \ldots , x_{n-1})$, and let $\widehat{D}_n := D_n \circ i_{n}: \T^{n-1} \to \Kn_n(\Sp^1)$.
\begin{prop}
	\label{prop:quotient_of_torus}
	For $\mathbf{x}, \mathbf{y} \in \T^{n-1}$, $\widehat{D}_n(\mathbf{x}) = \widehat{D}_n(\mathbf{y})$ implies either $\mathbf{y} = \mathbf{x}$ or $\mathbf{y} = \rho(\mathbf{x})$. As a consequence, $\widehat{D}_n$ is an open quotient map and $\Kn_n(\Sp^1) \cong \T^{n-1}/\langle \rho \rangle$.
\end{prop}
\begin{proof}
	Since $D_n \circ i_{n}(\mathbf{x}) = D_n \circ i_{n}(\mathbf{y})$, Lemma \ref{lemma:dm_up_to_O2} gives a $\tau \in O(2)$ such that $\tau(i_{n}(\mathbf{x})) = i_{n}(\mathbf{y})$. The first coordinates of $i_{n}(\mathbf{x})$ and $i_{n}(\mathbf{y})$ are $+1$, so we must have $\tau = \mathrm{id}_{\Sp^1}$ or $\tau = \rho$. Since $\widehat{D}_n$ is surjective, it is the open quotient map $\T^{n-1} \to \T^{n-1}/\langle \rho \rangle$. Hence, $\Kn_n(\Sp^1) \cong \T^{n-1}/\langle \rho \rangle$.
\end{proof}

\subsection{Homotopy types of the subsets $\Kn_n^{(t)}(\Sp^1)$.}
\label{sec:Kn^(t)}
\indent Given $t \in [0,\pi]$, define $\Kn_n^{(t)}(\Sp^1)$ as the subspace of $\Kn_{n}(\Sp^1)$ of elements $M$ such that $M_{1n} = t$. Clearly, $\Kn_n(\Sp^1)$ is the union of the two subsets $\Kn_n(\Sp^1) \setminus \Kn_n^{(0)}(\Sp^1)$ and $\Kn_n(\Sp^1) \setminus \Kn_n^{(\pi)}(\Sp^1)$. The objective of this section is to determine the homotopy type of these sets and that of their intersection in order to compute the homology groups of $\Kn_n(\Sp^1)$ using the Mayer-Vietoris sequence in Section \ref{sec:homology_computation}.

\begin{lemma}
	\label{lemma:K_n^(t)}
	If $t = 0$ or $\pi$, $\Kn_n^{(t)}(\Sp^1)$ is homeomorphic to $\Kn_{n-1}(\Sp^1)$.
\end{lemma}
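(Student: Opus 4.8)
The plan is to show that a matrix in $\Kn_n^{(t)}(\Sp^1)$ is completely redundant in its last row and column: the constraint $M_{1n}=t$ with $t\in\{0,\pi\}$ forces the $n$-th point of any realizing configuration to be $x_1$ (if $t=0$) or its antipode $-x_1$ (if $t=\pi$), so the whole matrix is recovered from its principal $(n-1)\times(n-1)$ submatrix by an explicit continuous formula. Thus the homeomorphism will simply be "delete the last point", with "append a copy (or antipode) of the first point" as its continuous inverse.

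Concretely, I would first record the elementary identity $d(x,-y)=\pi-d(x,y)$ for all $x,y\in\Sp^1$, together with $d(a,b)=0\iff a=b$ and $d(a,b)=\pi\iff b=-a$. Now take $M\in\Kn_n^{(t)}(\Sp^1)$, say $M=D_n(\mathbf{x})$ with $d(x_1,x_n)=t$, and let $N$ be the principal submatrix of $M$ on indices $\{1,\dots,n-1\}$; then $N=D_{n-1}(x_1,\dots,x_{n-1})\in\Kn_{n-1}(\Sp^1)$. When $t=0$ we have $x_n=x_1$, so $M_{in}=M_{ni}=d(x_i,x_1)=N_{i1}$ for $i\le n-1$ and $M_{nn}=0$; when $t=\pi$ we have $x_n=-x_1$, so $M_{in}=M_{ni}=d(x_i,-x_1)=\pi-N_{i1}$ and again $M_{nn}=0$. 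In either case $M$ is determined by $N$.

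This leads to two maps: the projection $p:\Kn_n^{(t)}(\Sp^1)\to\Kn_{n-1}(\Sp^1)$ sending $M$ to its principal $(n-1)\times(n-1)$ submatrix $N$ (which lands in $\Kn_{n-1}(\Sp^1)$ by the previous paragraph), and a section $s:\Kn_{n-1}(\Sp^1)\to\R^{n\times n}$ defined by $s(N)_{ij}:=N_{ij}$ for $i,j\le n-1$, $s(N)_{nn}:=0$, and $s(N)_{in}:=s(N)_{ni}:=N_{i1}$ if $t=0$ (resp. $\pi-N_{i1}$ if $t=\pi$) for $i\le n-1$. To see $s(N)\in\Kn_n^{(t)}(\Sp^1)$, write $N=D_{n-1}(y_1,\dots,y_{n-1})$, set $y_n:=y_1$ (resp. $y_n:=-y_1$), and check that $s(N)=D_n(y_1,\dots,y_n)$ with $s(N)_{1n}=d(y_1,y_n)=t$. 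The identity $p\circ s=\mathrm{id}$ is immediate and $s\circ p=\mathrm{id}$ is exactly the redundancy observed above. Since every entry of $p$ and of $s$ is a coordinate projection or an affine function of a single coordinate, both maps are continuous, so $p$ is a homeomorphism $\Kn_n^{(t)}(\Sp^1)\xrightarrow{\ \cong\ }\Kn_{n-1}(\Sp^1)$.

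There is essentially no hard step here; the only points requiring care are \textbf{(i)} checking that $p$ surjects onto all of $\Kn_{n-1}(\Sp^1)$ and that $s$ genuinely lands inside $\Kn_n^{(t)}(\Sp^1)$ — both handled by exhibiting the explicit point configuration $(y_1,\dots,y_{n-1},y_1)$ or $(y_1,\dots,y_{n-1},-y_1)$ — and \textbf{(ii)} the identity $d(x,-y)=\pi-d(x,y)$ used in the case $t=\pi$. One could alternatively invoke compactness of $\Sp^1$: $\Kn_n^{(t)}(\Sp^1)$ is then compact and $\Kn_{n-1}(\Sp^1)$ Hausdorff, so $p$ being a continuous bijection already forces it to be a homeomorphism; but since the section $s$ is so simple, writing it down explicitly is cleaner and avoids appealing to compactness.
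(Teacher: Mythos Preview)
Your proof is correct. It takes a different, more elementary route than the paper's. The paper works upstairs: it identifies the saturated closed subset $\T^{n-2}\times\{\pm 1\}\subset\T^{n-1}$ under the quotient map $\widehat{D}_n:\T^{n-1}\to\Kn_n(\Sp^1)\cong\T^{n-1}/\langle\rho\rangle$ (Proposition \ref{prop:quotient_of_torus}), observes that the restriction of $\widehat{D}_n$ to this subset is again a quotient by $\langle\rho\rangle$, and then invokes the universal property of quotient maps in both directions to produce the homeomorphism $\iota_0$. You instead work downstairs, directly on matrices, by exhibiting an explicit continuous section $s$ (append a duplicate or antipode of the first column) to the submatrix projection $p$. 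The two constructions yield the same homeomorphism: under $\widehat{D}$ the first point is normalized to $+1$, so the paper's map $\widehat{D}_{n-1}(\mathbf{x})\mapsto\widehat{D}_n(\mathbf{x},\pm 1)$ is exactly your $s$. Your argument is self-contained and avoids the quotient machinery entirely; the paper's argument has the advantage that it packages the map via a commutative square with $\widehat{D}_{n-1}$ and $\widehat{D}_n$, which is exactly the form needed later (e.g.\ in the proof of Lemma \ref{lemma:q_injective}, where $(\iota_0^{-1})_*$ is computed on generators by pulling back along that square).
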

\begin{proof}
	Let $t=0$ first, and consider the diagram:
	\begin{equation*}
		\begin{tikzcd}
			\T^{n-2} \arrow{r}{\widehat{D}_{n-1}} \arrow[d, "i"', "\cong"] & \Kn_{n-1}(\Sp^1) \arrow[dashed]{d}{\iota_0} \\
			\T^{n-2} \times \{+1\} \arrow{r}{\widehat{D}_n} & \Kn_{n}^{(0)}(\Sp^1).
		\end{tikzcd}
	\end{equation*}
	Notice that $\T^{n-2} \times \{+1\} = \widehat{D}_n^{-1}(\Kn_n^{(0)}(\Sp^1)) = \widehat{D}_n^{-1} \circ \widehat{D}_n(\T^{n-2} \times \{+1\})$, that is, the closed set $\T^{n-2} \times \{+1\} \subset \T^{n-1}$ is saturated with respect to $\widehat{D}_n$. Thus, the restriction of $\widehat{D}_n$ to $\T^{n-2} \times \{+1\} \to \Kn_n^{(0)}(\Sp^1)$ is a quotient map.\\
	\indent Now, let $\mathbf{x}, \mathbf{y} \in \T^{n-2}$. We claim that $\widehat{D}_{n-1}(\mathbf{x}) = \widehat{D}_{n-1}(\mathbf{y})$ if and only if $\widehat{D}_n \circ i(\mathbf{x}) = \widehat{D}_n \circ i(\mathbf{y})$. Indeed, $\widehat{D}_{n-1}(\mathbf{x}) = \widehat{D}_{n-1}(\mathbf{y})$ if and only if there exists $\tau \in \langle \rho \rangle$ such that $\tau(\mathbf{x}) = \mathbf{y}$ by Proposition \ref{prop:quotient_of_torus}. Since $\tau$ fixes $+1$, the last condition is also equivalent $\tau(i(\mathbf{x})) = i(\mathbf{y})$ which, in turn, is equivalent to $\widehat{D}_n \circ i(\mathbf{x}) = \widehat{D}_n \circ i(\mathbf{y})$ by Proposition \ref{prop:quotient_of_torus}. Then, the universal property of the quotient map $\widehat{D}_{n-1}$ produces a unique continuous map $\iota_0:\Kn_{n-1}(\Sp^1) \to \Kn_n^{(0)}(\Sp^1)$ that makes the diagram above commute. Similarly, the universal property of $\widehat{D}_n$ applied to $\widehat{D}_{n-1} \circ i_n^{-1}$ produces a map $\Kn_n^{(0)}(\Sp^1) \to \Kn_{n-1}(\Sp^1)$. This map has to be $\iota_0^{-1}$ because $\widehat{D}_{n}(\mathbf{x}) = \widehat{D}_{n}(\mathbf{y})$ if and only if $\widehat{D}_{n-1} \circ i^{-1}(\mathbf{x}) = \widehat{D}_{n-1} \circ i^{-1}(\mathbf{y})$ for any $\mathbf{x}, \mathbf{y} \in \T^{n-2} \times \{+1\}$. Thus, $\iota_0$ is a homeomorphism. The analogous argument shows that $\Kn_n^{(\pi)}(\Sp^1)$ is homeomorphic to $\Kn_{n-1}(\Sp^1)$ because $\langle \rho \rangle$ also fixes $-1 \in \Sp^1$.
\end{proof}

\begin{lemma}
	\label{lemma:retract_Kn^(t)_complement}
	If $t=0$ or $t=\pi$, then $\Kn_n^{(t)}(\Sp^1)$ is a deformation retract of $\Kn_n(\Sp^1) \setminus \Kn_n^{(\pi-t)}(\Sp^1)$.
\end{lemma}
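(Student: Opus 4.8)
The plan is to transport an obvious deformation retraction from the torus model of Proposition~\ref{prop:quotient_of_torus} down to $\Kn_n(\Sp^1)$. I will describe the case $t=0$; the case $t=\pi$ is symmetric. Under $\widehat{D}_n:\T^{n-1}\to\Kn_n(\Sp^1)$, the $(1,n)$-entry of $\widehat{D}_n(x_1,\dots,x_{n-1})$ equals $d(1,x_{n-1})$, so $\widehat{D}_n^{-1}\big(\Kn_n^{(\pi)}(\Sp^1)\big)=\T^{n-2}\times\{-1\}$ and $\widehat{D}_n^{-1}\big(\Kn_n(\Sp^1)\setminus\Kn_n^{(\pi)}(\Sp^1)\big)=U:=\T^{n-2}\times(\Sp^1\setminus\{-1\})$, an open saturated subset of $\T^{n-1}$; also $\widehat{D}_n(\T^{n-2}\times\{1\})=\Kn_n^{(0)}(\Sp^1)$, exactly as in the proof of Lemma~\ref{lemma:K_n^(t)}. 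Since $\widehat{D}_n$ is an open quotient map and $U$ is open and saturated, the restriction $\widehat{D}_n|_U:U\to\Kn_n(\Sp^1)\setminus\Kn_n^{(\pi)}(\Sp^1)$ is again an open quotient map.

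Next I would write down the retraction upstairs, where it is transparent: $\Sp^1\setminus\{-1\}$ is an open arc with midpoint $1$, so contracting it toward $1$ at constant speed is a strong deformation retraction $\gamma:(\Sp^1\setminus\{-1\})\times[0,1]\to\Sp^1\setminus\{-1\}$ onto $\{1\}$; concretely, writing $z=e^{i\theta}$ with $\theta\in(-\pi,\pi)$, set $\gamma_s(z):=e^{i(1-s)\theta}$. Putting $H:=\mathrm{id}_{\T^{n-2}}\times\gamma:U\times[0,1]\to U$ then yields $H_0=\mathrm{id}_U$, $H_1(U)\subseteq\T^{n-2}\times\{1\}$, and $H_s$ fixing $\T^{n-2}\times\{1\}$ pointwise for every $s$.

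The crucial step is descending $H$ through $\widehat{D}_n|_U$. The group $\langle\rho\rangle$ acts diagonally on $\T^{n-1}$, preserves $U$, and $\rho$ fixes both $1$ and $-1$; restricted to $\Sp^1\setminus\{-1\}$ it is the reflection fixing the midpoint $1$, hence commutes with each $\gamma_s$ (equivalently, $\theta\mapsto-\theta$ commutes with $\theta\mapsto(1-s)\theta$). Thus every $H_s$ is $\langle\rho\rangle$-equivariant, so $\widehat{D}_n\circ H:U\times[0,1]\to\Kn_n(\Sp^1)\setminus\Kn_n^{(\pi)}(\Sp^1)$ is constant on $\langle\rho\rangle$-orbits in the $U$-variable. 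Since $\widehat{D}_n|_U$ is open, $\widehat{D}_n|_U\times\mathrm{id}_{[0,1]}$ is an open quotient map, so $\widehat{D}_n\circ H$ factors as $\overline{H}\circ(\widehat{D}_n|_U\times\mathrm{id}_{[0,1]})$ for a continuous map $\overline{H}$ on $\big(\Kn_n(\Sp^1)\setminus\Kn_n^{(\pi)}(\Sp^1)\big)\times[0,1]$. Pushing the three properties of $H$ through the surjection $\widehat{D}_n|_U$ then shows that $\overline{H}$ is a strong deformation retraction onto $\widehat{D}_n(\T^{n-2}\times\{1\})=\Kn_n^{(0)}(\Sp^1)$.

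Finally, for $t=\pi$ I would run the identical argument with the roles of $1$ and $-1$ exchanged: $\widehat{D}_n^{-1}\big(\Kn_n(\Sp^1)\setminus\Kn_n^{(0)}(\Sp^1)\big)=\T^{n-2}\times(\Sp^1\setminus\{1\})$, one contracts the arc $\Sp^1\setminus\{1\}$ onto its midpoint $-1$, and $\rho$ again fixes $-1$ and acts on $\Sp^1\setminus\{1\}$ as the reflection fixing that midpoint, so the retraction is $\langle\rho\rangle$-equivariant and descends. I expect essentially all the real work to be in this descent step — checking that $\gamma$ commutes with $\rho$ and that the quotient property survives both restriction to the saturated open set $U$ and multiplication by $[0,1]$ — since the existence of the retraction itself is completely visible on the torus.
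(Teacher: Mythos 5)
Your proposal is correct and follows essentially the same route as the paper: lift to the torus model of Proposition \ref{prop:quotient_of_torus}, contract the open arc (there $\Sp^1\setminus\{+1\}$ onto $\{-1\}$, in your write-up the mirror-image case) by a linear homotopy in the angle, check that it commutes with $\rho$, and push it down through the restriction of the open quotient map $\widehat{D}_n$ to the saturated open set. Your explicit remark that openness of $\widehat{D}_n$ makes $\widehat{D}_n|_U\times\mathrm{id}_{[0,1]}$ a quotient map is a point the paper passes over more quickly, but it is the same argument.
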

\begin{proof}
	Let $H_{-1} := \Sp^1 \setminus \{+1\}$, the set of points in $\Sp^1$ of the form $e^{ir}$ for $0 < r < 2\pi$. Define $\xi:H_{-1} \times [0,1] \to \{-1\}$ to be the deformation retraction given by $\xi(e^{ir},t) := e^{i[(1-t)r+\pi t]}$, where $0 < r < 2\pi$. Consider $I := \text{Id}_{\T^{n-2}}$ and the deformation retraction $I \times \xi$ from $\T^{n-2} \times H_{-1}$ onto $\T^{n-2} \times \{-1\}$. Observe that $\T^{n-2} \times H_{-1}$ is an open subset of $\T^{n-1}$ that is saturated with respect to $\widehat{D}_n$, so the restriction of $\widehat{D}_n$ to $\T^{n-2} \times H_{-1} \to \Kn_n(\Sp^1) \setminus \Kn_n^{(0)}(\Sp^1)$ is a quotient map. Also, observe that $\rho(e^{ir}) = e^{i(2\pi-r)}$, so
	\begin{equation*}
		\xi(\rho(e^{ir}), t) = \xi(e^{i(2\pi-r)},t) = e^{i[(1-t)(2\pi-r)+\pi t]} = e^{i[2\pi-(1-t)r-\pi t]} = \rho \circ \xi(e^{ir},t).
	\end{equation*}
	Hence, the map $I \times \xi$ commutes with $\rho \times \text{Id}_{[0,1]}$ and thus descends to a homotopy defined on the quotient $\T^{n-2} \times H_{-1} / \langle \rho \rangle \cong \Kn_n(\Sp^1) \setminus \Kn_n^{(0)}(\Sp^1)$. Composing with $\widehat{D}_n$ gives a deformation retraction from $\widehat{D}_n \circ (I \times \xi)(\T^{n-2} \times H_{-1}, 0) = \widehat{D}_n(\T^{n-2} \times H_{-1}) = \Kn_n(\Sp^1) \setminus \Kn_n^{(0)}(\Sp^1)$ onto $\widehat{D}_n \circ (I \times H)(\T^{n-2} \times H_{-1}, 1) = \widehat{D}_n(\T^{n-2} \times \{-1\}) = \Kn_n^{(\pi)}(\Sp^1)$. The case of $\Kn_n(\Sp^1) \setminus \Kn_n^{(\pi)}(\Sp^1)$ and $\Kn_n^{(0)}(\Sp^1)$is analogous.
\end{proof}

\begin{corollary}
	\label{cor:Kn^(t)_complement_is_Kn}
	If $t=0$ or $t=\pi$, then $\Kn_n(\Sp^1) \setminus \Kn_n^{(t)}(\Sp^1)$ is homotopy equivalent to $\Kn_{n-1}(\Sp^1)$.
\end{corollary}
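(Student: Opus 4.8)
This should follow immediately by chaining the two preceding lemmas, once one keeps track of the index bookkeeping. The key observation is that the involution $t \mapsto \pi - t$ preserves the two-element set $\{0,\pi\}$: if $t = 0$ then $\pi - t = \pi$, and if $t = \pi$ then $\pi - t = 0$. So both lemmas remain applicable after such a swap.

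Concretely, the plan is as follows. First I would apply Lemma~\ref{lemma:retract_Kn^(t)_complement} with the value $\pi - t$ in place of $t$ (legitimate by the remark above): this gives that $\Kn_n^{(\pi - t)}(\Sp^1)$ is a deformation retract of $\Kn_n(\Sp^1) \setminus \Kn_n^{(\pi - (\pi - t))}(\Sp^1) = \Kn_n(\Sp^1) \setminus \Kn_n^{(t)}(\Sp^1)$. Since a deformation retraction is in particular a homotopy equivalence (the inclusion $\Kn_n^{(\pi-t)}(\Sp^1) \hookrightarrow \Kn_n(\Sp^1)\setminus\Kn_n^{(t)}(\Sp^1)$ is a homotopy equivalence with homotopy inverse the retraction), we conclude that $\Kn_n(\Sp^1)\setminus\Kn_n^{(t)}(\Sp^1) \simeq \Kn_n^{(\pi-t)}(\Sp^1)$.

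Next I would apply Lemma~\ref{lemma:K_n^(t)}, again noting that $\pi - t \in \{0,\pi\}$, to obtain a homeomorphism $\Kn_n^{(\pi-t)}(\Sp^1) \cong \Kn_{n-1}(\Sp^1)$, which is of course a homotopy equivalence. Composing the two homotopy equivalences yields $\Kn_n(\Sp^1)\setminus\Kn_n^{(t)}(\Sp^1) \simeq \Kn_{n-1}(\Sp^1)$, as claimed.

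There is essentially no obstacle here; the only thing to be careful about is not to misread the indices in Lemma~\ref{lemma:retract_Kn^(t)_complement}, which is stated with the complement taken at $\pi - t$ rather than at $t$, so the substitution $t \rightsquigarrow \pi - t$ is exactly what is needed to land on the set appearing in the corollary.
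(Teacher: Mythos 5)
Your proposal is correct and follows exactly the paper's own argument: apply Lemma \ref{lemma:retract_Kn^(t)_complement} (with the parameter $\pi-t$, which stays in $\{0,\pi\}$) to get a deformation retraction of $\Kn_n(\Sp^1)\setminus\Kn_n^{(t)}(\Sp^1)$ onto $\Kn_n^{(\pi-t)}(\Sp^1)$, then Lemma \ref{lemma:K_n^(t)} to identify the latter with $\Kn_{n-1}(\Sp^1)$. The index bookkeeping you spell out is precisely the substitution the paper uses implicitly.
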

\begin{proof}
	By Lemma \ref{lemma:retract_Kn^(t)_complement}, $\Kn_n(\Sp^1) \setminus \Kn_n^{(t)}(\Sp^1)$ is homotopy equivalent to $\Kn_n^{(\pi-t)}(\Sp^1)$ which, by Lemma \ref{lemma:K_n^(t)}, is homeomorphic to $\Kn_{n-1}(\Sp^1)$.
\end{proof}

\begin{lemma}
	\label{lemma:torus-interval}
	$\left(\Kn_n(\Sp^1) \setminus \Kn_n^{(\pi)}(\Sp^1)\right) \cap \left(\Kn_n(\Sp^1) \setminus \Kn_n^{(0)}(\Sp^1)\right) \cong \T^{n-2} \times (0, \pi) \simeq \T^{n-2}$. In particular, $\Kn_n^{(t)}(\Sp^1)$ is homeomorphic to $\T^{n-2}$ whenever $t \neq 0, \pi$.
\end{lemma}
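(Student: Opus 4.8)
The plan is to transport the statement to the torus via Proposition~\ref{prop:quotient_of_torus}, which identifies $\Kn_n(\Sp^1)$ with $\T^{n-1}/\langle\rho\rangle$ through the open quotient map $\widehat{D}_n$. Writing a point of $\T^{n-1}=\T^{n-2}\times\Sp^1$ as $(\mathbf{x},z)$, the $(1,n)$ entry of $\widehat{D}_n(\mathbf{x},z)=D_n(1,x_1,\dots,x_{n-2},z)$ is $d(1,z)$, so $\Kn_n^{(t)}(\Sp^1)=\widehat{D}_n\big(\T^{n-2}\times\{z\in\Sp^1:d(1,z)=t\}\big)$, and the intersection in the statement is exactly $W:=\{M\in\Kn_n(\Sp^1):0<M_{1n}<\pi\}=\widehat{D}_n\big(\T^{n-2}\times(\Sp^1\setminus\{1,-1\})\big)$. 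Recall also that $\rho(\mathbf{x},z)=(\rho(\mathbf{x}),\bar z)$.

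First I would treat the intersection. Set $C_+:=\{e^{ir}:0<r<\pi\}$ and $C_-:=\{e^{ir}:\pi<r<2\pi\}$, so $\Sp^1\setminus\{1,-1\}=C_+\sqcup C_-$ and $\rho$ carries $C_+$ homeomorphically onto $C_-$, fixing neither point. I claim $W=\widehat{D}_n(\T^{n-2}\times C_+)$: the inclusion $\supseteq$ is immediate, and if $M=\widehat{D}_n(\mathbf{y},w)\in W$ then $w\in C_+\cup C_-$, and when $w\in C_-$ we may replace $(\mathbf{y},w)$ by $\rho(\mathbf{y},w)$, which has the same $\widehat{D}_n$-value and last coordinate in $C_+$. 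Next, $\widehat{D}_n$ restricted to $\T^{n-2}\times C_+$ is continuous, open (it is the restriction of the open map $\widehat{D}_n$ to an open subset of $\T^{n-1}$), and injective: if $\widehat{D}_n(\mathbf{u})=\widehat{D}_n(\mathbf{v})$ with $\mathbf{u},\mathbf{v}\in\T^{n-2}\times C_+$, then Proposition~\ref{prop:quotient_of_torus} forces $\mathbf{v}=\mathbf{u}$ or $\mathbf{v}=\rho(\mathbf{u})$, and the latter is impossible since $\rho(\mathbf{u})$ has last coordinate in $C_-$. Hence this restriction is a homeomorphism onto $W$, and composing with the evident homeomorphism $\T^{n-2}\times C_+\cong\T^{n-2}\times(0,\pi)$ gives the first assertion; the homotopy equivalence with $\T^{n-2}$ is then immediate because $(0,\pi)$ is contractible.

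For the ``in particular'' statement, fix $t\in(0,\pi)$. Then $\{z\in\Sp^1:d(1,z)=t\}=\{e^{it},e^{-it}\}$ with $e^{it}\neq e^{-it}$ and $\rho(e^{it})=e^{-it}$, so the same reasoning gives $\Kn_n^{(t)}(\Sp^1)=\widehat{D}_n(\T^{n-2}\times\{e^{it}\})$ with $\widehat{D}_n$ restricted to $\T^{n-2}\times\{e^{it}\}$ injective. This restriction is then a continuous bijection from the compact space $\T^{n-2}$ onto the Hausdorff space $\Kn_n^{(t)}(\Sp^1)\subseteq\R^{n\times n}$, hence a homeomorphism.

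I do not expect a serious obstacle; the argument is bookkeeping once the torus coordinates are fixed. The one step that genuinely carries the proof is the observation that $\rho$ interchanges the two arcs $C_\pm$ — and, in the last part, the two points $e^{\pm it}$ — without fixing them; this is what makes each quotient a single copy of $\T^{n-2}$ rather than a more complicated identification space, and it is precisely what one must verify in order to know that $W=\widehat{D}_n(\T^{n-2}\times C_+)$ and that the relevant restrictions of $\widehat{D}_n$ are injective.
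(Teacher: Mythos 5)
Your proof is correct and follows essentially the same route as the paper: identify the intersection as the image of $\T^{n-2}\times\{e^{ir}:0<r<\pi\}$, note that $\rho$ swaps the two open arcs so the restriction of $\widehat{D}_n$ is a continuous, open bijection, hence a homeomorphism, and then restrict to a single fiber for the case $t\in(0,\pi)$. The only (harmless) deviation is that for the ``in particular'' part you invoke compactness of $\T^{n-2}$ and Hausdorffness of $\R^{n\times n}$ instead of restricting the already-established homeomorphism, which is an equally valid finish.
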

\begin{proof}
	\indent Define $H^+ := \{e^{it}: 0 < t < \pi\}$, $H^- := \rho(H^+)$, and $H := H^+ \cup H^-$. Let $\widetilde{K}_n := \left(\Kn_n(\Sp^1) \setminus \Kn_n^{(\pi)}(\Sp^1)\right) \cap \left(\Kn_n(\Sp^1) \setminus \Kn_n^{(0)}(\Sp^1)\right)$. Observe that $\widehat{D}_n^{-1}(\widetilde{K}_n) = \T^{n-2} \times H$. Since $\rho(\T^{n-2} \times H^+) = \T^{n-2} \times H^-$, Proposition \ref{prop:quotient_of_torus} implies that $\widehat{D}_n^{-1}(M) \cap (\T^{n-2} \times H^+)$ has exactly one element for every $M \in \widetilde{K}_n$. Hence, the restriction of $\widehat{D}_n$ to $\T^{n-2} \times H^+ \to \widetilde{K}_n$ is bijective. Since $\widehat{D}_n$ is continuous and open by Proposition \ref{prop:quotient_of_torus}, it is a homeomorphism from $\T^{n-2} \times H^+ \cong \T^{n-2} \times (0, \pi)$ to $\widetilde{K}_n$. Furthermore, since $H^+$ is contractible, $\widetilde{K}_n$ is homotopy equivalent to $\T^{n-2}$. In particular, the restriction $\T^{n-2} \times \{t\} \cong \T^{n-2} \xrightarrow{\widehat{D}_n} \Kn_n^{(t)}(\Sp^1)$ is a homeomorphism for any specific $0 < t <\pi$.
\end{proof}

\begin{remark}
	\label{rmk:double_mapping_cylinder}
	The proof of Lemma \ref{lemma:torus-interval} cannot be extended to show that $\Kn_n^{(t)}(\Sp^1) \cong \T^{n-2}$ when $t=0,\pi$ because the restriction of $\widehat{D}_n$ to $\T^{n-2} \times \{t\} \to \Kn_n^{(t)}(\Sp^1)$ is 2-to-1. However, combining Lemmas \ref{lemma:K_n^(t)} and \ref{lemma:torus-interval} gives yet another representation of $\Kn_n(\Sp^1)$ as the double mapping cylinder $\T^{n-2} \times [0,\pi] / \sim$ where $(\mathbf{x}, i) \sim (\rho(\mathbf{x}), i)$ for $i=0,1$.
\end{remark}

\subsection{A cellular structure for $\Kn_n(\Sp^1)$ and its chain groups.}
\label{sec:cellular_structure}
In this section, we occasionally write $\Kn_n = \Kn_n(\Sp^1)$ to reduce notational overload. We start by giving $\Kn_n(\Sp^1)$ a CW-complex structure induced from the quotient $\Kn_n(\Sp^1) = \T^{n-1}/\langle \rho \rangle$ in Proposition \ref{prop:quotient_of_torus}. Give $\Sp^1$ the CW-complex structure with two $0$-dimensional cells $v_1$ and $v_2$ and two $1$-dimensional cells $e_1$ and $e_2$ with boundary $v_2 - v_1$. Consider the characteristic maps into $\Sp^1 \subset \C$ that send $v_1 \mapsto +1$, $v_2 \mapsto -1$, $e_1 \mapsto (t \in [0,1] \mapsto e^{+it})$, and $e_2 \mapsto (t \in [0,1] \mapsto e^{-it})$. Under this structure, the reflection $\rho$ interchanges $e_1$ and $e_2$ while preserving their orientation -- in other words, $\rho$ is a cellular map. This CW-complex induces a product CW structure on the torus $\T^{n-1} = (\Sp^1)^{n-1}$ along with a cellular action of $\rho$. We know from the K\"unneth Theorem that the cellular chain groups and the homology of $\T^{n-1}$ satisfy
\begin{align*}
	C_m(\T^{n-1}) &= \bigoplus C_{i_1}(\Sp^1) \otimes \cdots \otimes C_{i_{n-1}}(\Sp^1)\\
	H_m(\T^{n-1}) &= \bigoplus H_{i_1}(\Sp^1) \otimes \cdots \otimes H_{i_{n-1}}(\Sp^1),
\end{align*}
where each sum runs over all indices $0 \leq i_j \leq 1$ such that $i_1 + \cdots i_{n-1} = m$. Both groups are equipped with an action of the symmetric group $S_{n-1}$.\\
\indent Since $\rho$ is a cellular map, Proposition \ref{prop:quotient_of_torus} implies that $\Kn_n(\Sp^1)$ inherits a CW structure from $\T^{n-1}$ wherein a $k$-cell in $\Kn_n(\Sp^1)$ is the equivalence class $[a] \in C_k(\T^{n-1})/\langle \rho \rangle$ of a $k$-cell from $\T^{n-1}$. Under this identification, the quotient map $\widehat{D}_n$ is cellular, so it induces the following structure on the chain groups:
\begin{equation*}
	C_m(\Kn_n(\Sp^1)) \cong C_m(\T^{n-1})/(a \sim \rho_*(a)).
\end{equation*}
By abuse of notation, we write $a$ both for an element of $C_m(\T^{n-1})$ and its equivalence class $(\widehat{D}_n)_*(a) \in C_m(\Kn_n(\Sp^1))$. However, we explicitly write $a \sim b$ to indicate that $a$ and $b$ in $C_m(\T^{n-1})$ represent the same class in $C_m(\Kn_n(\Sp^1))$.\\
\indent Additionally, the action of $S_{n-1}$ on $\T^{n-1}$ commutes with $\rho$, so it also descends to an action of $S_{n-1}$ on $\Kn_n$ and its chain complex. In particular, we can use the action of $S_{n-1}$ to describe the generators of $H_m(\T^{n-1})$ in terms of the 1-cycle $e_1-e_2$ that generates $H_1(\Sp^1)$. Going forward, the notation $(e_1-e_2)^{\otimes m} \otimes v_1^{\otimes (n-m-1)}$ represents the $m$ cycle $(e_1-e_2) \otimes \cdots \otimes (e_1-e_2) \otimes v_1 \otimes \cdots \otimes v_1$, where $(e_1-e_2)$ is repeated $m$ times and $v_1$, $n-m-1$ times. Then $L_m^{n-1} := \{\sigma \cdot ((e_1-e_2)^{\otimes m} \otimes v_1^{\otimes (n-m-1)}): \sigma \in S_{n-1}\}$ is a generating set for $H_m(\T^{n-1})$, where $\sigma \cdot (b_{1} \otimes \cdots \otimes b_{n-1}) = b_{\sigma(1)} \otimes \cdots \otimes b_{\sigma(n-1)}$.

\begin{prop}\label{prop:mayer-vietoris}
	Let $A := \Kn_n(\Sp^1) \setminus \Kn_n^{(\pi)}(\Sp^1)$ and $B := \Kn_n(\Sp^1) \setminus \Kn_n^{(0)}(\Sp^1)$. The homology groups of $\Kn_n$ obey the following Mayer-Vietoris sequence:
	\begin{align*}
		\cdots \xrightarrow{\partial_*} \, H_{m}(\mathbb{T}^{n-2}) \, \xrightarrow{(i_*,j_*)} \, H_{m}(\Kn_{n-1}) \oplus H_{m}(\Kn_{n-1}) \, \xrightarrow{k_* - l_*} \, H_{m}(\Kn_{n}) \xrightarrow{\partial_*} \, H_{m-1}(\mathbb{T}^{n-2}) \rightarrow \cdots
	\end{align*}
	where $i_*$, $j_*$, $k_*$, and $l_*$ are the maps induced by the inclusions $i: A \cap B \hookrightarrow A$, $j: A \cap B \hookrightarrow B$, $k: A \hookrightarrow \Kn_n$, $l: B \hookrightarrow \Kn_n$.
\end{prop}

\begin{proof}
	The conditions to use the Mayer-Vietoris sequence are satisfied because the sets $A$ and $B$ are open as they are the image of the open sets $\T^{n-2} \times (\Sp^1 \setminus \{-1\})$ and $\T^{n-2} \times (\Sp^1 \setminus \{+1\})$, respectively, under the open map $\widehat{D}_n$. Also, $\Kn_n = A \cup B$, $A \cong B \simeq \Kn_{n-1}$ by Corollary \ref{cor:Kn^(t)_complement_is_Kn}, and $A \cap B \simeq \T^{n-2}$ by Lemma \ref{lemma:torus-interval}.
\end{proof}

Thanks to the Lemmas in the last section, we can give an explicit description of the action of the maps in the Mayer-Vietoris sequence. For the inclusion $i:A \cap B \hookrightarrow A$, Lemmas \ref{lemma:torus-interval} and \ref{lemma:retract_Kn^(t)_complement} say that $A \cap B \simeq \T^{n-2}$ and $A \simeq \Kn_n^{(0)}(\Sp^1)$, so we can replace the map $i:A \cap B \hookrightarrow A$ with $i:\T^{n-2} \xrightarrow{\cong} \T^{n-2} \times \{+1\} \xrightarrow{\widehat{D}_n} \Kn_n^{(0)}(\Sp^1)$, that is, $i(\mathbf{x}) := \widehat{D}_n(\mathbf{x} \times (+1))$. Similarly, $j:A \cap B \hookrightarrow B$ is replaced with $j:\T^{n-2} \to \Kn_n^{(\pi)}(\Sp^1)$ defined by $j(\mathbf{x}) := \widehat{D}_n(\mathbf{x} \times (-1))$. As for $k:A \hookrightarrow \Kn_n$ and $l:B \hookrightarrow \Kn_n$, these are equivalent to the inclusions $k:\Kn_n^{(0)} \simeq A \hookrightarrow \Kn_n$ and $l:\Kn_n^{(\pi)} \simeq B \hookrightarrow \Kn_n$ by Lemma \ref{lemma:retract_Kn^(t)_complement}. The actions induced by these maps in the chain groups are as follows:
\begin{itemize}
	\item $i_*(a) = a \otimes v_1 \in C_m(\Kn_{n-1})$ for $a \in C_m(\T^{n-2})$,
	\item $j_*(a) = a \otimes v_2 \in C_m(\Kn_{n-1})$ for $a \in C_m(\T^{n-2})$,
	\item $k_*(a \otimes v_1) = a \otimes v_1 \in C_m(\Kn_n)$ for $a \in C_m(\Kn_{n-1})$, and
	\item $l_*(a \otimes v_2) = a \otimes v_2 \in C_m(\Kn_n)$ for $a \in C_m(\Kn_{n-1})$.
\end{itemize}

To compute the homology groups $H_m(\Kn_n)$ via the Mayer-Vietoris sequence, we need to know the action of some of the maps in the sequence. We calculate that explicitly for $i_*, j_*$ and the boundary map $\partial_*$. We start with:
\begin{lemma}\label{lemma:q_injective_or_0}
	The restriction of $(\widehat{D}_n)_*:C_m(\T^{n-1}) \to C_m(\Kn_n)$ to $L_m^{n-1}$ is 0 when $m$ is odd and induces multiplication by $2$ when $m$ is even.
\end{lemma}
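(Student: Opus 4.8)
The claim concerns how the quotient map $\widehat{D}_n$ acts on the specific generating cycles $L_m^{n-1} = \{\sigma \cdot ((e_1-e_2)^{\otimes m} \otimes v_1^{\otimes(n-m-1)})\}$ of $H_m(\T^{n-1})$. Recall $\widehat{D}_n$ is the quotient by $\langle\rho\rangle$, and at the chain level $(\widehat{D}_n)_*$ sends a cell $a$ to its class under $a \sim \rho_*(a)$. So I need to understand $\rho_*$ on these cycles. Since $\rho$ swaps $e_1 \leftrightarrow e_2$ preserving orientation and fixes $v_1, v_2$, we get $\rho_*(e_1 - e_2) = e_2 - e_1 = -(e_1-e_2)$, while $\rho_*(v_1) = v_1$. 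Because $\rho$ acts diagonally on $\T^{n-1}$, on a basic tensor $b_1 \otimes \cdots \otimes b_{n-1}$ we have $\rho_*(b_1 \otimes \cdots \otimes b_{n-1}) = \rho_*(b_1) \otimes \cdots \otimes \rho_*(b_{n-1})$.

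**The computation.** Apply this to a generator $g = \sigma\cdot((e_1-e_2)^{\otimes m}\otimes v_1^{\otimes(n-m-1)})$: the tensor has exactly $m$ factors equal to $(e_1-e_2)$ and the rest equal to $v_1$, so $\rho_*(g) = (-1)^m g$. When $m$ is odd, $\rho_*(g) = -g$, hence in $C_m(\Kn_n) = C_m(\T^{n-1})/(a\sim\rho_*(a))$ we have $g \sim -g$, i.e. $2g \sim 0$; but I must be slightly careful — the claim is that $(\widehat{D}_n)_*$ restricted to $L_m^{n-1}$ is \emph{zero}, not merely torsion. The point is that for a free abelian group quotient $C_m(\T^{n-1})/(a - \rho_*(a))$, the image of $g$ is the coset $g + \langle a - \rho_*(a)\rangle$; since $g - \rho_*(g) = g - (-g) = 2g$ lies in the relation subgroup, $2(\widehat{D}_n)_*(g) = 0$ in $C_m(\Kn_n)$. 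To conclude it is actually $0$ I should identify $C_m(\Kn_n)$ concretely as a free abelian group on the $\rho$-orbits of cells and observe that a generator $g$ with $\rho_*(g) = -g$ contributes a cell that bounds — more precisely, the cell $g$ and the cell $-g = \rho_*(g)$ are identified in the quotient \emph{CW complex}, which forces the corresponding chain-group element to be $2$-torsion, and since $C_m(\Kn_n)$ is a subquotient that is free on the cells fixed-orientation by $\rho$ plus $\Z/2$ pieces... Actually the cleanest route: decompose $C_m(\T^{n-1})$ as a $\Z[\langle\rho\rangle]$-module into $\rho$-orbits; an orbit $\{g, -g\}$ (when $\rho_*g = -g$) spans a copy of $\Z$ on which $\rho$ acts by $-1$, and $\Z/(x - (-x)) = \Z/2\Z$ generated by the image of $g$, so $(\widehat D_n)_*(g)$ generates a $\Z/2\Z$ summand and in particular is not literally $0$ unless... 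Hmm.

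**Reconciling with the statement.** Rereading: the statement says the restriction "is $0$ when $m$ is odd." Given my analysis shows $(\widehat{D}_n)_*(g)$ is $2$-torsion but generically nonzero in the chain group, the resolution must be that the relevant map is at the level of \emph{homology}, or that there is a further identification I am missing — most likely the orbit-counting has pairs of distinct generators $g, g'$ with $\rho_*(g) = -g'$ (not $\rho_*(g) = -g$), in which case $g \sim -g'$ and the image is a single free generator, still nonzero. The genuine vanishing at $m$ odd must come from combining the $\sim$ relation with the chain differential, i.e. the claim is really about the induced map $H_m(\T^{n-1}) \to H_m(\Kn_n)$ being zero for $m$ odd; so the plan is: (i) establish $\rho_* = (-1)^m$ on $L_m^{n-1}$ as above; (ii) for $m$ odd, since every class in $H_m(\T^{n-1})$ is represented by an integer combination of the $L_m^{n-1}$ and $\rho_*$ acts as $-1$, the transfer/averaging argument (or the fact that $(\widehat{D}_n)_* \circ \text{tr} = 1 + \rho_* = 0$) shows the image in $H_m(\Kn_n)$ is annihilated; (iii) for $m$ even, $\rho_*$ acts as $+1$, so $g$ maps to a genuine cycle whose class is detected by pulling back and we get the class $g + \rho_*(g) = 2g$ pattern, giving multiplication by $2$ under the composite $H_m(\T^{n-1}) \xrightarrow{(\widehat D_n)_*} H_m(\Kn_n) \xrightarrow{(\widehat D_n)^*\text{ or transfer}} H_m(\T^{n-1})$.

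**Main obstacle.** The crux is nailing down \emph{exactly which map} the lemma is asserting things about and in which category (chain groups vs.\ homology), and then in the even case proving that "multiplication by $2$" is the right description — this requires knowing that $(\widehat{D}_n)_*$ followed by the transfer homomorphism $\mathrm{tr}:C_*(\Kn_n)\to C_*(\T^{n-1})$ (sending a cell to the sum of its $\rho$-preimages) equals $\mathrm{id} + \rho_*$, which on $L_m^{n-1}$ is $1 + (-1)^m$; so the composite is $0$ for $m$ odd and multiplication by $2$ for $m$ even, and one then checks $\mathrm{tr}$ is injective on the relevant classes (which it is, since $\mathrm{tr}\circ(\widehat D_n)_* = 1 + \rho_*$ is injective on the $(-1)^m = 1$ eigenspace). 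I would therefore structure the proof as: (1) compute $\rho_*(g) = (-1)^m g$ for $g \in L_m^{n-1}$; (2) invoke the transfer $\mathrm{tr}$ for the double cover / $\Z_2$-quotient $\widehat{D}_n$ and the identity $\mathrm{tr}\circ(\widehat D_n)_* = \mathrm{id}+\rho_*$; (3) combine to read off both cases. The main obstacle is handling the fact that $\widehat{D}_n$ is not quite a covering map (it is branched over the cells where $\rho$ acts with fixed points, i.e.\ the sub-tori where a coordinate is $\pm1$), so the transfer must be defined carefully on the CW/cellular chain level where $\rho$ \emph{is} free on the generating cells of $L_m^{n-1}$ for $m \geq 1$ — I would restrict attention to that subcomplex or simply verify the transfer identity directly on the subgroup generated by $L_m^{n-1}$, which is the honest content of the lemma.
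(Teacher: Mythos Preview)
Your core observation $\rho_*(g) = (-1)^m g$ for $g \in L_m^{n-1}$ is correct and is really all that is needed, but you then talk yourself out of the straightforward chain-level argument. The lemma is, as stated, about the cellular chain map $C_m(\T^{n-1}) \to C_m(\Kn_n)$, not about homology; and for $m \geq 1$ the involution $\rho$ permutes the product $m$-cells \emph{freely} (every such cell has at least one $e_i$ factor, which $\rho$ sends to the distinct cell $e_{3-i}$ with degree $+1$), so $C_m(\Kn_n)$ is free abelian on $\rho$-orbits and contains no $\Z/2$ pieces. With that in hand your odd case is complete: $2(\widehat{D}_n)_*(g)=0$ in a free group forces $(\widehat{D}_n)_*(g)=0$.

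The even case has a genuine gap. Your transfer identity $\mathrm{tr}\circ(\widehat{D}_n)_* = 1+\rho_* = 2$ on $L_m^{n-1}$ is a statement about a composite landing back in $C_m(\T^{n-1})$; it does not by itself exhibit $(\widehat{D}_n)_*(g)$ as twice an element of $C_m(\Kn_n)$, which is precisely what ``induces multiplication by $2$'' means here and what the subsequent arguments use (they literally form $\tfrac{1}{2}(\widehat{D}_n)_*(L_m^{n-1})$ as a subset of $H_m(\Kn_n)$). The transfer route can be salvaged by using the \emph{other} composite: since $\rho_*(g)=g$, the chain $g$ lies in the image of $\mathrm{tr}$, say $g=\mathrm{tr}(h)$, and then $(\widehat{D}_n)_*(g) = (\widehat{D}_n)_*\mathrm{tr}(h) = 2h$. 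You never make this step.

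By contrast, the paper's proof is a two-line chain computation that you overlooked: peel off one tensor factor, writing $g = e_1 \otimes r - e_2 \otimes r$ with $r=(e_1-e_2)^{\otimes(m-1)}\otimes v_1^{\otimes(n-m-1)}$, and apply the relation $a\sim\rho_*(a)$ to the second summand only. Since $\rho_*(e_2\otimes r)=e_1\otimes\rho_*(r)=(-1)^{m-1}e_1\otimes r$, one reads off $(\widehat{D}_n)_*(g)=(1+(-1)^m)\,e_1\otimes r$ in $C_m(\Kn_n)$, giving both cases at once and, in the even case, producing the explicit element $e_1\otimes r$ whose double is the image.
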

\begin{proof}
	Let $a_0 := (e_1-e_2)^{\otimes m} \otimes \cdots \otimes v_1^{\otimes n-m-1} \in L_m^{n-1}$. Then
	\begin{align*}
		& (\widehat{D}_n)_*(a_0) = (e_1-e_2) \otimes (e_1-e_2)^{\otimes(m-1)} \otimes v_1 ^{\otimes(n-m-1)} \\
		&= e_1 \otimes (e_1-e_2)^{\otimes(m-1)} \otimes v_1 ^{\otimes(n-m-1)} - e_2 \otimes (e_1-e_2)^{\otimes(m-1)} \otimes v_1 ^{\otimes(n-m-1)} \\
		&\sim e_1 \otimes (e_1-e_2)^{\otimes(m-1)} \otimes v_1 ^{\otimes(n-m-1)} - e_1 \otimes (e_2-e_1)^{\otimes(m-1)} \otimes v_1 ^{\otimes(n-m-1)} \\
		&= (1+(-1)^{m})e_1 \otimes (e_1-e_2)^{\otimes(m-1)} \otimes v_1 ^{\otimes(n-m-1)}.
	\end{align*}
	Hence, $(\widehat{D}_n)_*(a_0) = 0$ if $m$ is odd and $(\widehat{D}_n)_*(a_0) = 2 \cdot e_1 \otimes (e_1-e_2)^{\otimes(m-1)} \otimes v_1 ^{\otimes(n-m-1)}$ if $m$ is even. Since the action of $\sigma \in S_{n-1}$ commutes with $(\widehat{D}_n)_*$, the analogous result holds for the rest of the terms $\sigma \cdot a \in L_m^{n-1}$.
\end{proof}

As a consequence of the previous lemma, the action of the maps $i_*, j_*: H_m(\T^{n-2}) \to H_m(\Kn_{n-1})$ is trivial if $m$ is odd, but we can't yet say that it is injective if $m$ is even. To show that, we first study the boundary map $\partial_*:H_{m+1}(\Kn_n) \to H_m(\T^{n-2})$. We need to subdivide the CW structure on $\Sp^1$ as shown in Figure \ref{fig:new_circle} in order to evaluate $\partial_*$. Notice that this subdivision is still compatible with the action of $\rho$.
\begin{figure}[h]
	\centering
\begin{tikzpicture}[scale=1.75]
	\tikzmath{
		\pt=0.035;
		\r = 1;
		\cx=0;	\cy=0;
		\t1=  0;  \px1=\cx+cos(\t1); \py1=\cy+sin(\t1);
		\t2=180;  \px2=\cx+cos(\t2); \py2=\cy+sin(\t2);
		\t3= 90;  \px3=\cx+cos(\t3); \py3=\cy+sin(\t3);
		\t4=270;  \px4=\cx+cos(\t4); \py4=\cy+sin(\t4);
	}

	\tikzset{
		mid arrow/.style={postaction={decorate,decoration={
					markings,
					mark=at position .5 with {\arrow[#1]{Stealth[length=8]}}
				}}}
 	}

	\foreach \p in {(\px1,\py1), (\px2,\py2), (\px3,\py3), (\px4,\py4)}
	{
		\draw[fill] \p circle [radius=\pt];
	}

	\node[right]	at (\px1,\py1) {$v_1$};
	\node[ left]	at (\px2,\py2) {$v_2$};
	\node[above]	at (\px3,\py3) {$v_3$};
	\node[below]	at (\px4,\py4) {$v_4$};

	\draw[mid arrow](\cx+\r, \cy)
		arc[start angle=0,end angle=90,radius=\r]
		node[midway, above right]{$e_1^-$};
	
	\draw[mid arrow](\cx, \cy+\r)
		arc[start angle=90,end angle=180,radius=\r]
		node[midway, above left]{$e_1^+$};
	
	\draw[mid arrow](\cx, \cy-\r)
		arc[start angle=270,end angle=180,radius=\r]
		node[midway, below left]{$e_2^+$};
	
	\draw[mid arrow](\cx+\r, \cy)
		arc[start angle=0,end angle=-90,radius=\r]
		node[midway, below right]{$e_2^-$};
\end{tikzpicture} 	\caption{Subdivision of the CW structure of the circle.}
	\label{fig:new_circle}
\end{figure}

\begin{lemma}
	\label{lemma:q_injective}
	If $m \geq 2$ is even, then the set $\frac{1}{2}(\widehat{D}_n)_*(L_{m}^{n-1})$ is linearly independent in $H_m(\Kn_n)$.
\end{lemma}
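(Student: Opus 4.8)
The plan is to detect these classes by means of a transfer (averaging) map for the two‑to‑one quotient $\widehat{D}_n:\T^{n-1}\to\Kn_n(\Sp^1)$. First I would fix notation: for a subset $S\subseteq\{1,\dots,n-1\}$ with $|S|=m$, write $a_S\in L_m^{n-1}$ for the cycle whose tensor factors indexed by $S$ equal $e_1-e_2$ and whose remaining factors equal $v_1$, so that as a \emph{set} $L_m^{n-1}=\{a_S:|S|=m\}$ has exactly $\binom{n-1}{m}$ elements; by the K\"unneth theorem the distinct classes $a_S$ form a $\Z$-basis of the free abelian group $H_m(\T^{n-1})$. By Lemma \ref{lemma:q_injective_or_0} we have $(\widehat{D}_n)_*(a_S)=2\,c_S$, where $c_S$ is the corresponding element of $\tfrac12(\widehat{D}_n)_*(L_m^{n-1})$, so it is enough to show that the set $(\widehat{D}_n)_*(L_m^{n-1})=\{2\,c_S:|S|=m\}$ is linearly independent in $H_m(\Kn_n(\Sp^1))$; this forces the $c_S$ to be linearly independent as well, since $\sum_S n_S c_S=0$ implies $\sum_S n_S(2\,c_S)=0$.

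Next I would build the transfer. Recall from Section \ref{sec:cellular_structure} that $\rho$ is a cellular involution which fixes $v_1,v_2$ and interchanges $e_1,e_2$ without reversing their orientations, and that $C_*(\Kn_n(\Sp^1))=C_*(\T^{n-1})/(a\sim\rho_*a)$ is precisely the complex of $\langle\rho\rangle$-coinvariants, with $(\widehat{D}_n)_*$ the quotient chain map. Define $\theta:C_*(\Kn_n(\Sp^1))\to C_*(\T^{n-1})$ by $\theta([a]):=a+\rho_*(a)$. This is well defined (it kills every element $a-\rho_*a$, hence descends to the coinvariants) and is a chain map because $\rho_*$ commutes with $\partial$; it therefore induces $\theta_*:H_*(\Kn_n(\Sp^1))\to H_*(\T^{n-1})$ satisfying $\theta_*\circ(\widehat{D}_n)_*=\mathrm{id}+\rho_*$ on $H_*(\T^{n-1})$.

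Then I would work out the action of $\rho_*$ in degree $m$. Since complex conjugation reverses the orientation of each $\Sp^1$-factor, $\rho_*$ acts as $-1$ on $H_1(\Sp^1)=\Z\langle e_1-e_2\rangle$ and as $+1$ on $H_0(\Sp^1)$; by K\"unneth it acts on $H_m(\T^{n-1})$ as multiplication by $(-1)^m$, which equals $+1$ because $m$ is even. Hence $\theta_*\circ(\widehat{D}_n)_*=2\cdot\mathrm{id}$ on $H_m(\T^{n-1})$. Now if $\sum_{|S|=m}n_S\,(\widehat{D}_n)_*(a_S)=0$ in $H_m(\Kn_n(\Sp^1))$, applying $\theta_*$ gives $2\sum_S n_S a_S=0$ in $H_m(\T^{n-1})$; as this group is free with basis $\{a_S\}$, every $n_S=0$, which is the desired linear independence.

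The only genuinely delicate point is the construction of $\theta$: one needs $\rho$ to act on the cellular chains "without inversions" so that $C_*(\Kn_n(\Sp^1))$ is honestly the coinvariant complex and $\theta$ is an honest chain map. This was, however, already arranged when the CW structure was introduced (and the refined structure of Figure \ref{fig:new_circle} is likewise $\rho$-equivariant), so there is nothing new to verify. A variant avoiding transfers altogether: the standard identification $H_*(\T^{n-1}/\langle\rho\rangle;\Q)\cong H_*(\T^{n-1};\Q)^{\langle\rho\rangle}$ together with the triviality of $\rho_*$ on $H_m(\T^{n-1};\Q)$ for even $m$ shows that $(\widehat{D}_n)_*$ is a rational isomorphism in degree $m$, in particular injective there, which again yields the statement.
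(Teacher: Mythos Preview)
Your proof is correct and takes a genuinely different, cleaner route from the paper. The paper establishes the lemma by an explicit, hands-on computation of the Mayer-Vietoris connecting homomorphism $\partial_*:H_m(\Kn_n)\to H_{m-1}(\T^{n-2})$ on each class $\sigma\cdot a_0$: this requires subdividing the CW structure on $\Sp^1$ as in Figure~\ref{fig:new_circle}, splitting the generators into those with last tensor factor $v_1$ (the set $E_1$) versus $e_1-e_2$ (the set $E_2$), showing that $\partial_*$ carries $E_2$ bijectively onto $\pm L_{m-1}^{n-2}$, and then handling $E_1$ by induction on $n$ via the homeomorphism $\Kn_n^{(0)}\cong\Kn_{n-1}$. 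Your transfer argument bypasses all of this. Once one notes (as the paper does in Section~\ref{sec:cellular_structure}) that $\rho$ acts cellularly without inverting any cell, so that $C_*(\Kn_n)$ is the coinvariant complex, the averaging map $\theta([a])=a+\rho_*(a)$ is an honest chain map satisfying $\theta_*\circ(\widehat{D}_n)_*=\mathrm{id}+\rho_*$; since $\rho_*$ acts as $(-1)^m$ on $H_m(\T^{n-1})$, for even $m$ this is $2\cdot\mathrm{id}$, and the linear independence of the $c_S$ follows immediately from freeness of $H_m(\T^{n-1})$. Your rational-coefficients variant is equally valid. What your approach buys is brevity and a conceptual explanation of why even parity matters; what the paper's approach yields in addition is the explicit identification $\partial_*(E_2)=\pm L_{m-1}^{n-2}$, though that extra information is not used elsewhere in the argument.
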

\begin{proof}
	Let us write $\partial_*$ for the boundary map in the Mayer-Vietoris sequence of Proposition \ref{prop:mayer-vietoris} and $\partial$ for the boundary map in a chain complex (which complex will be clear from context). Recall that to find $\partial_*(c)$ for $c \in H_{m}(\Kn_n)$, we have to write $c = a+b$, where $a \in C_m(A) = C_m(\Kn_{n-1}^{(0)}(\Sp^1))$ and $b \in C_m(B) = C_m(\Kn_{n-1}^{(\pi)}(\Sp^1))$ such that $\partial(a) = - \partial(b)$. Then we set $\partial_*(c) := \partial(a) = -\partial(b) \in H_{m-1}(A \cap B)$.\\
	\indent Let $a_0 := e_1 \otimes (e_1-e_2)^{\otimes(m-1)} \otimes v_1 ^{\otimes(n-m-1)}$. The set $\frac{1}{2}(\widehat{D}_n)_*(L_{m}^{n-1})$ is generated by elements of the form $\sigma \cdot a_0$ by Lemma \ref{lemma:q_injective_or_0}. Observe that the $i$-th component of $\sigma \cdot a_0$ is $v_1$ if and only if $\sigma(i)>m$. In particular, the last coordinate of $\sigma \cdot a_0$ is $v_1$ when $\sigma(n) > m$. In that case, $\sigma \cdot a_0 = \sigma \cdot a_0 + 0$, where $\sigma \cdot a_0 \in A, 0 \in B$, and thus, $\partial_*(\sigma \cdot a_0) = -\partial(0) = 0$. However, if $\sigma(n) \leq m$, we can write $\sigma \cdot a_0 = v_1^{\otimes k} \otimes e_1 \otimes a_0' \otimes (e_1-e_2)$, where $k := \min\{i: \sigma(i+1) \leq m\}$ and $a_0' \in L_{m-2}^{n-k-2} \subset C_{m-2}(\T^{n-k-2})$. Write $E_1$ for the set of $\sigma \cdot a_0 \in \frac{1}{2}(\widehat{D}_n)_*(L_{m}^{n-1})$ such that $\sigma(n) > m$ and $E_2$ for those $\sigma \cdot a_0$ for which $\sigma(n) \leq m$.\\
	\indent Suppose then that $\sigma \cdot a_0 \in E_2$. Since $e_1 = e_1^+ + e_1^-$ and $e_2 = e_2^+ + e_2^-$, we can write 
	\begin{equation*}
		\sigma \cdot a_0
		= v_1^{\otimes k} \otimes e_1 \otimes a_0' \otimes (e_1^+ - e_2^+)
		+ v_1^{\otimes k} \otimes e_1 \otimes a_0' \otimes (e_1^- - e_2^-),
	\end{equation*}
	where the first term is in $C_m(\Kn_n^{(0)}(\Sp^1))$ and the second, in $C_m(\Kn_n^{(\pi)}(\Sp^1))$. Then, calculating in $C_m(\Kn_{n}^{(0)}) \subset C_m(\Kn_n)$, we have
	\begin{align}
		&\partial\left(v_1^{\otimes k} \otimes e_1 \otimes a_0' \otimes (e_1^+ - e_2^+)\right) \nonumber\\
		&= v_1^{\otimes k} \otimes \partial(e_1) \otimes a_0' \otimes (e_1^+ - e_2^+) - v_1^{\otimes k} \otimes e_1 \otimes \partial(a_0') \otimes (e_1^+ - e_2^+) + (-1)^{m-1} v_1^{\otimes k} \otimes e_1 \otimes a_0' \otimes \partial(e_1^+ - e_2^+) \nonumber\\
		&= v_1^{\otimes k} \otimes (v_2-v_1) \otimes a_0' \otimes (e_1^+ - e_2^+) - v_1^{\otimes k} \otimes e_1 \otimes 0 \otimes (e_1^+ - e_2^+) + (-1)^{m-1} v_1^{\otimes k} \otimes e_1 \otimes a_0' \otimes (v_4 - v_3) \nonumber\\
		&= v_1^{\otimes k} \otimes (v_2-v_1) \otimes a_0' \otimes (e_1^+ - e_2^+) + (-1)^{m-1} v_1^{\otimes k} \otimes e_1 \otimes a_0' \otimes (v_4 - v_3).
		\label{eq:boundary_comp}
	\end{align}
	Observe that $a_0'$ has $(m-2)$ factors $e_1-e_2$, so $\rho_*(a_0') = (-1)^{m-2} a_0'$, which equals $a_0'$ because $m$ is even. Then the first term of Equation (\ref{eq:boundary_comp}) equals
	\begin{align*}
		& v_1^{\otimes k} \otimes (v_2-v_1) \otimes a_0' \otimes (e_1^+ - e_2^+) \\
		&\sim v_1^{\otimes k} \otimes (v_2-v_1) \otimes a_0' \otimes e_1^+ - \rho_*\left(v_1^{\otimes k} \otimes (v_2-v_1) \otimes a_0' \otimes e_2^+\right) \\
		&= v_1^{\otimes k} \otimes (v_2-v_1) \otimes a_0' \otimes e_1^+ - v_1^{\otimes k} \otimes (v_2-v_1) \otimes a_0' \otimes e_1^+\\
		&= 0.
	\end{align*}
	As for the second term of Equation (\ref{eq:boundary_comp}),
	\begin{align*}
		v_1^{\otimes k} \otimes e_1 \otimes a_0' \otimes (v_4 - v_3)
		&= v_1^{\otimes k} \otimes e_1 \otimes a_0' \otimes v_4 - v_1^{\otimes k} \otimes e_1 \otimes a_0' \otimes v_3 \\
		&\sim v_1^{\otimes k} \otimes e_1 \otimes a_0' \otimes v_4 - \rho_*\left(v_1^{\otimes k} \otimes e_1 \otimes a_0' \otimes v_3 \right)\\
		&= v_1^{\otimes k} \otimes e_1 \otimes a_0' \otimes v_4 - v_1^{\otimes k} \otimes e_2 \otimes a_0' \otimes v_4\\
		&= v_1^{\otimes k} \otimes (e_1 - e_2) \otimes a_0' \otimes v_4.
	\end{align*}
	Lastly, notice that
	\begin{equation*}
		\partial\left( v_1^{\otimes k} \otimes (e_1 - e_2) \otimes a_0' \otimes e_2^- \right) = v_1^{\otimes k} \otimes (e_1 - e_2) \otimes a_0' \otimes v_4 - v_1^{\otimes k} \otimes (e_1 - e_2) \otimes a_0' \otimes v_1.
	\end{equation*}
	Putting the last three equations together shows that (\ref{eq:boundary_comp}) equals $(-1)^{m-1} v_1^{\otimes k} \otimes (e_1 - e_2) \otimes a_0' \otimes v_1$ plus a boundary in $C_{m-1}(\Kn_n^{(0)})$. Thus,
	\begin{align*}
		\partial_*([\sigma \cdot a_0]) &= \left[\partial\left(v_1^{\otimes k} \otimes e_1 \otimes a_0' \otimes (e_1^+ - e_2^+)\right)\right]\\
		&= [(-1)^{m-1} v_1^{\otimes k} \otimes (e_1 - e_2) \otimes a_0'],
	\end{align*}
	which is an element of $L_{m-1}^{n-2} \subset H_{m-1}(\T^{n-2})$.\\
	\indent Clearly, the assignments $a_0 \mapsto a_0'$ and $[\sigma \cdot a_0] \mapsto \partial_*([\sigma \cdot a_0])$ are injective. Thus, $\partial_*\left(\frac{1}{2}(\widehat{D}_n)_*(L_{m}^{n-1}) \right) = \partial_*(E_2)$ has the same amount of elements as the subset of $L_{m}^{n-1}$ whose last coordinate is $(e_1-e_2)$, which is $\binom{n-2}{m}$. However, that number is also the rank of $H_{m-1}(\T^{n-2})$, from which we conclude $\partial_*\left(\frac{1}{2}(\widehat{D}_n)_*(L_{m}^{n-1}) \right) = (-1)^{m-1} L_{m-1}^{n-2}$. Thus, a zero linear combination of elements of $E_2$ induces a zero linear combination in $H_{m-1}(\T^{n-2})$ which must be trivial because $L_{m-1}^{n-2}$ is a generating set of $H_{m-1}(\T^{n-2})$. Hence, the starting linear combination must be trivial in $H_m(\Kn_n)$ and it follows that $E_2$ is linearly independent.\\
	\indent What about the elements of $E_1$? Notice that $(\widehat{D}_{n})_*^{-1}(2 \cdot E_1) = L_{m}^{n-2} \otimes \{v_1\} \subset H_m(\T^{n-2} \times \{+1\})$. Using the notation of Lemma \ref{lemma:K_n^(t)},
	\begin{equation*}
		\textstyle
		(\iota_0^{-1})_*(E_1) = \frac{1}{2}(\iota_0^{-1} \circ \widehat{D}_{n})_*(L_{m}^{n-2} \otimes \{v_1\}) = \frac{1}{2}(\widehat{D}_{n-1} \circ i^{-1} )_*(L_{m}^{n-2} \otimes \{v_1\}) = \frac{1}{2}(\widehat{D}_{n-1} )_* (L_{m}^{n-2}).
	\end{equation*}
	Hence, an inductive argument on $n$ shows that $E_1$ is linearly independent because the set $\frac{1}{2}(\widehat{D}_{n-1} )_* (L_{m}^{n-2}) \subset H_m(\Kn_{n-1})$ already is.\\
	\indent So far we have that both $E_1$ and $E_2$ are linearly independent. To finish the proof of this lemma, observe that $\langle E_1 \rangle \cap \langle E_2 \rangle = 0$ in $H_m(\Kn_n)$ because the last coordinate in every element of $E_1$ is $v_1$, while the last coordinate in $E_2$ is $e_1-e_2$. Hence, the set $\frac{1}{2}(\widehat{D}_n)_*(L_{m}^{n-1}) = E_1 \cup E_2$ is linearly independent in $H_m(\Kn_n)$.
\end{proof}

\begin{corollary}
	\label{cor:q_injective_or_0}
	The map $(\widehat{D}_n)_*:H_m(\T^{n-1}) \to H_m(\Kn_n)$ is 0 whenever $m$ is odd and injective when $m$ is even, in which case it induces coordinate-wise multiplication by $2$.
\end{corollary}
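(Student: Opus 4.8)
The plan is to deduce Corollary~\ref{cor:q_injective_or_0} directly from Lemmas~\ref{lemma:q_injective_or_0} and~\ref{lemma:q_injective}, with one preliminary observation: the generating set $L_m^{n-1}$ is in fact a basis of the free abelian group $H_m(\T^{n-1})$. Indeed, the K\"unneth decomposition recorded above expresses $H_m(\T^{n-1})$ as a direct sum of copies of $\Z$ indexed by the $m$-element subsets $S\subseteq\{1,\dots,n-1\}$, the summand for $S$ generated by the class carrying the cycle $e_1-e_2$ in the coordinates of $S$ and $v_1$ in the remaining ones; letting $\sigma$ run over $S_{n-1}$ produces exactly these classes, with $S=\sigma^{-1}(\{1,\dots,m\})$, so $L_m^{n-1}$ consists of $\binom{n-1}{m}$ distinct classes, which is the rank of $H_m(\T^{n-1})$, and is therefore a basis. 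Since $(\widehat{D}_n)_*$ is a group homomorphism, it is determined on $H_m(\T^{n-1})$ by its values on $L_m^{n-1}$, and Lemma~\ref{lemma:q_injective_or_0} computes those values at the chain level (hence also on homology classes).

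For $m$ odd, Lemma~\ref{lemma:q_injective_or_0} gives $(\widehat{D}_n)_*(a)=0$ for every $a\in L_m^{n-1}$, so $(\widehat{D}_n)_*$ vanishes on all of $H_m(\T^{n-1})$. For $m\geq 2$ even, the same lemma gives $(\widehat{D}_n)_*(a)=2b_a$, where $b_a$ denotes the corresponding element of $\frac{1}{2}(\widehat{D}_n)_*(L_m^{n-1})$, and Lemma~\ref{lemma:q_injective} asserts that the set $\{b_a:a\in L_m^{n-1}\}$ is linearly independent in $H_m(\Kn_n)$. Injectivity is then formal: if $x=\sum_a c_a a$ lies in $\ker(\widehat{D}_n)_*$, then $\sum_a(2c_a)b_a=0$ in $H_m(\Kn_n)$, so $2c_a=0$ and hence $c_a=0$ for every $a$ by independence of the $b_a$, and thus $x=0$. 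For the ``coordinate-wise multiplication by $2$'' clause I would note that, $\{b_a\}$ being independent, the correspondence $a\mapsto b_a$ extends to a monomorphism $\beta\colon H_m(\T^{n-1})\hookrightarrow H_m(\Kn_n)$ with image the free subgroup generated by $\frac{1}{2}(\widehat{D}_n)_*(L_m^{n-1})$, and $(\widehat{D}_n)_*=2\beta$; equivalently, in the basis $L_m^{n-1}$ of the source and the coordinates supplied by $\frac{1}{2}(\widehat{D}_n)_*(L_m^{n-1})$ on the target, the matrix of $(\widehat{D}_n)_*$ is $2I$. The one even value not covered by Lemma~\ref{lemma:q_injective_or_0} is $m=0$, where $(\widehat{D}_n)_*\colon H_0(\T^{n-1})=\Z\to H_0(\Kn_n)=\Z$ is the identity because both spaces are path-connected; so the ``multiplication by $2$'' assertion is to be read for $m\geq 2$.

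Since this is a corollary, no step is a real obstacle; the only point deserving care is pinning down exactly what ``coordinate-wise multiplication by $2$'' should mean. Linear independence of $\frac{1}{2}(\widehat{D}_n)_*(L_m^{n-1})$ is weaker than its being a basis of the free part of $H_m(\Kn_n)$ -- a priori it could generate only a proper finite-index subgroup -- so I would avoid claiming the image of $(\widehat{D}_n)_*$ is ``everything times $2$'' and instead record the factorization $(\widehat{D}_n)_*=2\beta$ through the monomorphism $\beta$ just described; this is precisely the form in which the Mayer-Vietoris computation of Section~\ref{sec:homology_computation} uses the statement. Everything else reduces to the one-line deductions above from the two preceding lemmas.
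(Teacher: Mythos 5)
Your proposal is correct and follows essentially the same route as the paper: it deduces the statement from Lemma~\ref{lemma:q_injective_or_0} (the value on the generators $L_m^{n-1}$ at chain level) together with Lemma~\ref{lemma:q_injective} (linear independence of $\frac{1}{2}(\widehat{D}_n)_*(L_m^{n-1})$), which is exactly the paper's argument. Your added care in checking that $L_m^{n-1}$ is actually a basis of $H_m(\T^{n-1})$, and in handling $m=0$ and the precise meaning of ``multiplication by $2$,'' only fills in details the paper's shorter proof leaves implicit.
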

\begin{proof}
	By Lemma \ref{lemma:q_injective_or_0}, $(\widehat{D}_n)_*$ is 0 or multiplication by 2 at the level of chains. Hence, the map in homology satisfies the same property. If additionally $m$ is even, Lemma \ref{lemma:q_injective} shows that $(\widehat{D}_n)_*$ is injective because it sends the generating set $L_{m}^{n-1}$ to the linearly independent set $(\widehat{D}_n)_*(L_{m}^{n-1})$.
\end{proof}

\subsection{The computation of $H_m(\Kn_n(\Sp^1))$.}
\label{sec:homology_computation}
Now, we have all the tools to prove our main result.

\HomologyKn*

\begin{proof}
    Recall the shorthand notation $\Kn_n = \Kn_n(\Sp^1)$. We start by giving a recursive formula to obtain the homology groups of $\Kn_n$ in terms of those of $\Kn_{n-1}$. First of all, $\Kn_2$ consists only of the edge $e_1 \sim e_2$ pointing from $v_1$ to $v_2$, so its reduced homology groups are all 0. Additionally, $H_0(\Kn_n)=\mathbb{Z}$ because $\Kn_n$ is the image of the connected space $\mathbb{T}^{n-1}$ under the continuous map $\widehat{D}_n$. As a consequence, the terms at the right end of the Mayer-Vietoris sequence (Proposition \ref{prop:mayer-vietoris})
	\begin{equation*}
		H_1(\Kn_{n}) \xrightarrow{\partial_*} \, H_{0} (\mathbb{T}^{n-2}) \rightarrow H_0(\Kn_{n-1}) \oplus H_0(\Kn_{n-1}) \, \to \, H_0(\Kn_{n}) \rightarrow 0
	\end{equation*}
	are $H_1(\Kn_{n}) \xrightarrow{\partial_*} \, \mathbb{Z} \rightarrow \mathbb{Z}^2 \, \rightarrow \, \mathbb{Z} \rightarrow 0$. The last 4 terms are part of the short-exact sequence $0 \to \mathbb{Z} \xrightarrow{\left(\begin{smallmatrix} +1 \\ +1 \end{smallmatrix}\right)} \mathbb{Z}^2 \xrightarrow{\left(\begin{smallmatrix} +1 \\ -1 \end{smallmatrix}\right)^\intercal} \mathbb{Z} \to 0$, so $\partial_* = 0$.\\
	\indent By Corollary \ref{cor:q_injective_or_0}, $i_*$ and $j_*$ are 0 in odd dimensions and induce coordinate-wise multiplication by 2 in even non-zero dimensions. Then, in
	\begin{equation*}
		H_{2k+1}(\Kn_{n}) \, \xrightarrow{\partial_*} \, H_{2k}(\mathbb{T}^{n-2}) \, \xrightarrow{(i_*,j_*)} \, H_{2k}(\Kn_{n-1}) \oplus H_{2k}(\Kn_{n-1}),
	\end{equation*}
	we have $\operatorname{Im} \partial_* = \ker (i_*, j_*) = 0$. Moving to the left in the sequence above gives
	\begin{equation*}
		H_{2k+1}(\mathbb{T}^{n-2}) \xrightarrow{0} H_{2k+1}(\Kn_{n-1}) \oplus H_{2k+1}(\Kn_{n-1}) \, \rightarrow \, H_{2k+1}(\Kn_{n}) \xrightarrow{\partial_* = 0} H_{2k}(\mathbb{T}^{n-2}).
	\end{equation*}
	Thus, $H_{2k+1}(\Kn_{n}) = H_{2k+1}(\Kn_{n-1}) \oplus H_{2k+1}(\Kn_{n-1})$. Since $H_{2k+1}(\Kn_2)=0$ for all $k \geq 0$, we obtain $H_{2k+1}(\Kn_n)=0$.\\
	\indent Now we only need to consider the even-dimensional terms in the sequence. Let's rename the maps as follows:
	\begin{equation*}
		0 \to H_{2k}(\mathbb{T}^{n-2}) \xrightarrow{a} H_{2k}(\Kn_{n-1}) \oplus H_{2k}(\Kn_{n-1}) \, \xrightarrow{b} \, H_{2k}(\Kn_{n}) \xrightarrow{c} H_{2k-1}(\mathbb{T}^{n-2}) \to 0.
	\end{equation*}
	Write $H_{2k}(\Kn_n) = \mathbb{Z}^{\beta_{n,2k}} \oplus T_{n,2k}$, where $T_{n,2k}$ is the torsion subgroup. Let $\beta_{n,m}$ be the rank of $H_m(\Kn_n)$ and $\beta_{n,m}^{(2)}$ be the number of generators of $T_{n,2k}$. Then the above sequence becomes:
	\begin{equation*}
		0 \to \mathbb{Z}^{\binom{n-2}{2k}} \xrightarrow{a} \mathbb{Z}^{2 \beta_{n-1,2k}} \oplus T_{n-1,2k}^2 \xrightarrow{b} \mathbb{Z}^{\beta_{n,2k}} \oplus T_{n,2k} \xrightarrow{c} \mathbb{Z}^{\binom{n-2}{2k-1}} \to 0.
	\end{equation*}
	Here we make several observations. First, $a(\mathbb{Z}^{\binom{n-2}{2k}}) = (2 \cdot \mathbb{Z})^{\binom{n-2}{2k}} \subset \mathbb{Z}^{2\beta_{n-1,2k}}$ because $a$ is injective. Since $b \circ a = 0$, if $\sigma$ is a generator of $H_{2k}(\mathbb{T}^{n-2})$, then $b$ maps $\frac{1}{2} a(\sigma)$ into $T_{n,2k}$. This is because $2 \cdot b(\frac{1}{2}(a(\sigma))) = b(a(\sigma)) = 0$. Also, $b$ is injective on $T_{n-1,2k}^2$ since this subgroup doesn't intersect $\operatorname{Im}(a) = \ker(b)$ except at 0, so $b(T_{n-1,2k}^2) \subset T_{n,2k}$. No other element from $H_{2k}(\Kn_{n-1})^2$ maps to $T_{n,2k}$ under $b$, so $T_{n,2k} \cong T_{n-1,2k}^2 \oplus \frac{1}{2}\operatorname{Im}(a)$. Since the elements in $\frac{1}{2} \operatorname{Im}(a)$ have order 2, by induction, $T_{n,2k}$ consists solely of copies of $\mathbb{Z}/2\mathbb{Z}$. Moreover, the coefficents $\beta_{n,2k}^{(2)}$ satisfy
	\begin{equation*}
		\textstyle
		\beta_{n,2k}^{(2)} = 2 \cdot \beta_{n-1,2k}^{(2)} + \binom{n-2}{2k}.
	\end{equation*}
	A simple induction argument permits verifying that $\beta_{n,2k}^{(2)} = \sum_{i=0}^{n-2k-2} \binom{n-2}{i}$.\\
	\indent Regarding the free terms, since $\ker(b) = \operatorname{Im}(a)$, $b$ is injective on $H_{2k}(\Kn_{n-1})/\operatorname{Im}(a)$, and in particular, $b(\mathbb{Z}^{2\beta_{n-1,2k}} / \frac{1}{2} \operatorname{Im}(a)) \subset \mathbb{Z}^{\beta_{n,2k}}$. Also, since $c$ is surjective, $$H_{2k}(\Kn_n)/\operatorname{Im}(b) \to H_{2k-1}(\mathbb{T}^{n-2})$$ is an isomorphism. In particular, $b$ restricted to $\mathbb{Z}^{2\beta_{n-1,2k}} / \frac{1}{2} \operatorname{Im}(a)$ must be an isomorphism onto a free subgroup of $\mathbb{Z}^{\beta_{n,2k}}$. These facts imply the relation
	\begin{equation*}
		\textstyle
		\beta_{n,2k} = \operatorname{rk}(\mathbb{Z}^{2\beta_{n-1,2k}} / \frac{1}{2} \operatorname{Im}(a)) + \operatorname{rk}(H_{2k-1}(\mathbb{T}^{n-2})) = 2\beta_{n-1,2k} - \binom{n-2}{2k} + \binom{n-2}{2k-1}.
	\end{equation*}
	The closed formula $\beta_{n,2k} = \binom{n-1}{2k}$ can then be obtained by induction using the relation above together with the identity $\binom{n-2}{2k-1}+\binom{n-2}{2k} = \binom{n-1}{2k}$.
\end{proof}

\section{The state complex of $\Sp^1$.}
\label{sec:state_complex}

To motivate the constructions and theorems in this section, let's revisit the calculations that yielded Figure \ref{fig:K3_S1}, i.e. the visualization of $\Kn_3(\Sp^1)$ as the boundary of a 3-simplex in $\R^3$.

\begin{example}
	\label{ex:K3_S1}
	Let $\mathbf{v}_1 = (1,-1,-1)$, $\mathbf{v}_2 = (1,-1,1)$, $\mathbf{v}_3 = (1,1,-1)$, and $\mathbf{v}_4 = (1,1,1)$. These are all the points $\mathbf{v} \in \T^3$ such that $v_1 = 1$ and $v_i = \pm 1$ for $2 \leq i \leq 3$. Define
	\begin{equation*}
		\begin{array}{cccc}
			M_1 :=
			\begin{pmatrix}
				0   & \pi & \pi \\
				\pi & 0   & 0   \\
				\pi & 0   & 0
			\end{pmatrix}
			&
			M_2 :=
			\begin{pmatrix}
				0   & \pi & 0   \\
				\pi & 0   & \pi \\
				0   & \pi & 0
			\end{pmatrix}
			&
			M_3 :=
			\begin{pmatrix}
				0   & 0   & \pi \\
				0   & 0   & \pi \\
				\pi & \pi & 0
			\end{pmatrix}
			&
			M_4 :=
			\begin{pmatrix}
				0   & 0   & 0    \\
				0   & 0   & 0    \\
				0   & 0   & 0
			\end{pmatrix}.
		\end{array}
	\end{equation*}
	Notice that $M_i = D_3(\mathbf{v}_i) \in \Kn_3(\Sp^1)$ for $1 \leq i \leq 4$. Let $\mathbf{x} \in \T^3$ be arbitrary and denote $d_{ij} := d(x_i, x_j)$. Rotate and (if necessary) reflect $\mathbf{x}$ so that $\arg(x_1) = 0$ and $\arg(x_2) \in [0, \pi]$. Let $t_i := \arg(x_i)$. Notice that $\mathbf{x}$ is contained in a semicircle if and only if $t_3 \in [0,\pi] \cup [t_2+\pi, 2\pi]$ and, in that case, one of the distances $d_{12}, d_{13}, d_{23}$ is the sum of the other two. If $t_3 \in [\pi, t_2+\pi]$, then $d_{12}+d_{23}+d_{31}=2\pi$ instead. With these observations, we can find an expression for the distances $d_{12}, d_{13}, d_{23}$ in terms of $t_2$ and $t_3$, and for $D_3(\mathbf{x})$ in terms of $t_2,t_3$ and the matrices $M_i$ above. The results are in Table \ref{tab:K3_S1}. Notice that every coefficient $\lambda_i$ lies on the interval $[0,\pi]$, and $\lambda_1+\lambda_2+\lambda_3 = \pi$. In any case, $D_3(\mathbf{x})$ is a convex combination of a subset of $\{M_1, M_2, M_3, M_4\}$.\\
	\indent Notice also that $\lambda_1$ and $\lambda_2$ determine the $d_{ij}$ in each row of Table \ref{tab:K3_S1}. When $\mathbf{x}$ is contained in a semicircle, both $\lambda_1$ and $\lambda_2$ are the distance between a pair of consecutive points. For example, when $t_3 \in [0,t_2]$, $x_3$ is between $x_1$ and $x_2$, and we have $d_{13} = \lambda_1$ and $d_{32} = \lambda_2$. Something similar happens when $t_3 \in [\pi, t_2+\pi]$: the point $-x_3$ is between $x_1$ and $x_2$, $d(x_1, -x_3) = \lambda_2$, and $d(-x_3, x_2) = \lambda_1$. Since $\lambda_1+\lambda_2+\lambda_3 = \pi$, we also have $d_{13} = \pi-\lambda_2$ and $d_{32} = \pi-\lambda_1$. Thus, not only can we write $D_3(\mathbf{x})$ in terms of $\lambda_1$ and $\lambda_2$, but $\lambda_1$ and $\lambda_2$ are the distance between a pair of consecutive points in some semicircle.
	
	\begin{table}[h]
	\begin{tabularx}{\textwidth}{lXXXl}
		Case & Distances & Coefficients & Relations & $D_3(\mathbf{x})$ equals \\
		\hline
		$t_3 \in [0, t_2]$
		&
		$d_{12} = t_2$ \newline
		$d_{13} = t_3$ \newline
		$d_{23} = t_2-t_3$
		&
		$\lambda_1 := t_3$ \newline
		$\lambda_2 := t_2-t_3$ \newline
		$\lambda_3 := \pi-t_2$
		&
		$d_{12} = \lambda_1+\lambda_2$ \newline
		$d_{13} = \lambda_1$ \newline
		$d_{23} = \lambda_2$
		&
		$\frac{\lambda_1}{\pi} M_1 + \frac{\lambda_2}{\pi} M_2 + \frac{\lambda_3}{\pi} M_4$
		\\
		\hline
		$t_3 \in [t_2, \pi]$
		&
		$d_{12} = t_2$ \newline
		$d_{13} = t_3$ \newline
		$d_{23} = t_3-t_2$
		&
		$\lambda_1 := t_2$ \newline
		$\lambda_2 := t_3-t_2$ \newline
		$\lambda_3 := \pi-t_3$
		&
		$d_{12} = \lambda_1$ \newline
		$d_{13} = \lambda_1+\lambda_2$ \newline
		$d_{23} = \lambda_2$
		&
		$\frac{\lambda_1}{\pi} M_1 + \frac{\lambda_2}{\pi} M_3 + \frac{\lambda_3}{\pi} M_4$
		\\
		\hline
		$t_3 \in [\pi, t_2+\pi]$
		&
		$d_{12} = t_2$ \newline
		$d_{13} = 2\pi-t_3$ \newline
		$d_{23} = t_3-t_2$
		&
		$\lambda_1 := t_2-t_3 \newline
		\phantom{\lambda_1:=} +\pi$ \newline
		$\lambda_2 := t_3-\pi$ \newline
		$\lambda_3 := \pi-t_2$
		&
		$d_{12} = \lambda_1+\lambda_2$ \newline
		$d_{13} = \lambda_1+\lambda_3$ \newline
		$d_{23} = \lambda_2+\lambda_3$
		&
		$\frac{\lambda_1}{\pi} M_1 + \frac{\lambda_2}{\pi} M_2 + \frac{\lambda_3}{\pi} M_3$.
		\\
		\hline
		$t_3 \in [t_2+\pi, 2\pi]$
		&
		$d_{12} = t_2$ \newline
		$d_{13} = 2\pi-t_3$ \newline
		$d_{23} = t_2-t_3 \newline
		\phantom{d_{23} = } +2\pi$
		&
		$\lambda_1 := t_2$ \newline
		$\lambda_2 := 2\pi-t_3$ \newline
		$\lambda_3 := t_3-t_2 \newline
		\phantom{\lambda_3 :=} -\pi$
		&
		$d_{12} = \lambda_2$ \newline
		$d_{13} = \lambda_1$ \newline
		$d_{23} = \lambda_1+\lambda_2$
		&
		$\frac{\lambda_1}{\pi} M_2 + \frac{\lambda_2}{\pi} M_3 + \frac{\lambda_3}{\pi} M_4$.
	\end{tabularx}
	\caption{The distance matrix $D_3(\mathbf{x})$ from Example \ref{ex:K3_S1} and its entries in terms of $t_2,t_3$ and $M_i$ for $1 \leq i \leq 4$.}
	\label{tab:K3_S1}
	\end{table}
\end{example}

\indent The objective of this section is to generalize Example \ref{ex:K3_S1} by showing that $\Kn_n(\Sp^1)$ is the geometric realization of a simplicial complex that we call the \emph{$n$-th state complex} $\St_n(\Sp^1)$. The State Complex will have features similar to those of Example \ref{ex:K3_S1}. Given $\mathbf{t} := (t_1, t_2, t_3) \in \{0\} \times [0,\pi] \times [0,2\pi]$, define $\mathbf{x}(\mathbf{t}) \in \T^3$ by $x_i(\mathbf{t}) := \exp(t_i \cdot \im)$. The function defined by $\mathbf{t} \mapsto D_3(\mathbf{x}(\mathbf{t}))$ is piecewise affine, and affine when restricted to each of the four cases in Table \ref{tab:K3_S1}. Moreover, the breakpoints occur when $x_3(\mathbf{t})$ is equal or antipodal to either $x_1(\mathbf{t})$ or $x_2(\mathbf{t})$. The $n$-th State Complex generalizes these features by defining a family of functions $\Phi_{A}: \Delta_{m} \to \T^n$ parametrized by the simplices $A \in \St_n(\Sp^1)$ such that the compositions $D_n \circ \Phi_{A}$ are affine. Moreover, if $A$ and $B$ are simplices of $\St_n(\Sp^1)$ with non-empty intersection, then $\Im(\Phi_{A \cap B})$ is the set of $\mathbf{x} \in \Im(\Phi_{A})$ or ($\rho(\mathbf{x}) \in \Im(\Phi_{A})$) such that a prescribed set of components of $\mathbf{x}$ are equal or antipodal to one another. A similar observation can be made for $\Im(\Phi_{A \cap B})$ as a subset of $\Im(\Phi_{B})$ and, in both cases, the set of components is determined by $A$ and $B$.

\begin{defn}
	\label{def:m_simplex}
	We model the $m$-simplex as
	\begin{equation*}
		\Delta_{m} := \{ \mathbf{t} \in \R^{m+1}: 0 \leq t_i \leq 1 \text{ and } t_1 + \dots + t_{m+1} = 1 \}.
	\end{equation*}
	Given $\mathbf{t} \in \Delta_{m}$, define $S_1(\mathbf{t}) := 0$ and $S_j(\mathbf{t}) := t_1 + \cdots t_{j-1}$ for $2 \leq j \leq m+2$.
\end{defn}

\subsection{Cluster structures}
\label{subsec:cluster_structures}
In Example \ref{ex:K3_S1}, we wrote every entry of $D_3(\mathbf{x})$ in terms of $\lambda_1$ and $\lambda_2$, which turned out to be the distances between pairs of consecutive points in a semicircle. We also showed that $D_3(\mathbf{x})$ is a convex combination of a subset of $M_1, \dots, M_4$. The objective of this section is to generalize these two observations. For example, when $\mathbf{x} \in \T^n$ is contained in a semicircle, there exists a permutation $\tau \in S_n$ and points $0 \leq y_1 \leq \cdots y_n \leq \pi$ such that $d(x_i, x_j) = |y_{\tau(i)}-y_{\tau(j)}|$. The distances $|y_{\tau(i)}-y_{\tau(j)}|$ and, hence, the matrix $D_n(\mathbf{x})$ are determined by $\tau$ and by the distances between consecutive points $y_i$. For a general point $\mathbf{x} \in \T^n$, we emulate the case when $t_3 \in [\pi, t_2+\pi]$ by replacing $\mathbf{x}$ with an $\widetilde{\mathbf{x}} \in \T^n$ whose components are contained in a semicircle in such a way that the distances between consecutive components of $\widetilde{\mathbf{x}}$ and their ``order'' determine $D_n(\mathbf{x})$.\\
\indent We start by choosing a map $\mathbf{x} \mapsto \widetilde{\mathbf{x}}$ and construct an indexing function $\mathfrak{c}$ that tracks the components of $\mathbf{x}$ that change in $\widetilde{\mathbf{x}}$ and the order of the components of $\widetilde{\mathbf{x}}$ in the semicircle that contains it. Then, for a fixed function $\mathfrak{c}$, we construct a function $\Phi_{\mathfrak{c}}:\Delta_{m-1} \to \T^{n}$ such that, for any $\mathbf{x} \in \Im(\Phi_{\mathfrak{c}})$, the order of the components of $\widetilde{\mathbf{x}}$ is determined by $\mathfrak{c}$. Fixing an order on $\widetilde{\mathbf{x}}$ allows us to write $D_n(\mathbf{x})$ in terms of the distances between the components of $\widetilde{\mathbf{x}}$ for any $\mathbf{x} \in \Im(\Phi_{\mathfrak{c}})$, and we show that this expression is a convex combination of a set of matrices determined by $\mathfrak{c}$.

\begin{defn}
	\label{def:x_tilde}
	
	Define the \emph{chirality function} $\sigma:\Sp^1 \to \{-1, +1\}$ by
	\begin{equation*}
		\sigma(x) :=
		\begin{cases}
			+1, & \text{ if } \arg(x) \in [0, \pi),\\
			-1, & \text{ if } \arg(x) \in [\pi, 2\pi).
		\end{cases}
	\end{equation*}
	Given $\mathbf{x} \in \T^n$, define $\widetilde{\mathbf{x}} := (\widetilde{x}_1, \dots, \widetilde{x}_n) \in \T^n$ where $\widetilde{x}_i := \sigma(x_i/x_1) \cdot x_i$. Observe that the components of $\widetilde{\mathbf{x}}$ are contained in the semicircle $\arg\inv(I)$, where $I = [\arg(x_1), \arg(x_1)+\pi) \mod(2\pi)$.
\end{defn}
\begin{remark}
	\label{rmk:chirality_function}
	Notice that $\sigma(\epsilon z) = \epsilon \sigma(z)$ when $\epsilon \in \{+1, -1\}$.
\end{remark}

\begin{defn}
	\label{def:cluster_structure_x}
	Let $\mathbf{x} \in \T^n$. Let $1 \leq m \leq n$ be the cardinality of $\{ \widetilde{x}_1, \dots, \widetilde{x}_n \}$. Set $k_1 := 1$, and choose indices $1 < k_j \leq n$ for $2 \leq j \leq m$ such that $\widetilde{x}_{k_1}, \dots, \widetilde{x}_{k_m}$ appear in anticlockwise order and are pairwise distinct. Define $\mathfrak{c}_{\mathbf{x}}:\{1, \dots, n\} \to \{ \pm 1, \dots, \pm m\}$ by $\mathfrak{c}_{\mathbf{x}}(i) := \sigma(x_j/x_1) \cdot j$ where $j$ satisfies $\widetilde{x}_i = \widetilde{x}_{k_j}$. We call $\mathfrak{c}_{\mathbf{x}}$ the \emph{cluster structure} induced by $\mathbf{x}$.
\end{defn}

In our later constructions, it will be useful to have an abstract notion of a cluster structure, i.e. a cluster structure that wasn't necessarily induced by a specific point $\mathbf{x} \in \T^n$.

\begin{defn}
	\label{def:cluster_structure}
	Let $1 \leq m \leq n$ be integers. An \emph{$(m,n)$-cluster structure} is a function $\mathfrak{c}:\{1, \dots, n\} \to \{\pm 1, \dots, \pm m\}$ such that $\mathfrak{c}(1) = +1$ and $|\mathfrak{c}|:\{1, \dots, n\} \to \{1, \dots, m\}$ is surjective. We say that $\mathfrak{c}$ has \emph{$m$ degrees of freedom}.
\end{defn}

\begin{remark}[$\mathfrak{c}_{\mathbf{x}}$ is an $(m,n)$-cluster structure.]
	$\mathfrak{c}_{\mathbf{x}}: \{1, \dots, n\} \to \{ \pm 1, \dots, \pm m\}$ satisfies Definition \ref{def:cluster_structure}. Indeed, if $k_1, \dots, k_m$ are indices as in Definition \ref{def:cluster_structure_x}, then $|\mathfrak{c}_{\mathbf{x}}(k_j)| = j$. Hence, $|\mathfrak{c}_{\mathbf{x}}|:\{1, \dots, n\} \to \{1, \dots, m\}$ is surjective. In particular, $k_1=1$ implies $\mathfrak{c}_{\mathbf{x}}(1) = \mathfrak{c}_{\mathbf{x}}(k_1) = \sigma(x_{k_1}/x_1) = 1$.
\end{remark}

The main use of an abstract cluster structure is to define a function $\Phi_{\mathfrak{c}}$ so that most points in the image of $\Phi_{\mathfrak{c}}$ have $\mathfrak{c}$ as their induced cluster structure.

\begin{defn}
	\label{def:phi_c}
	Let $1 \leq m \leq n$ be integers, and let $\mathfrak{c}$ be an $(m,n)$-cluster structure. Define
	\begin{align*}
		\Phi_{\mathfrak{c}}: \Delta_{m-1} &\to \{1\} \times \T^{n-1} \\
		\mathbf{t} & \mapsto (x_1, \dots, x_n),
	\end{align*}
	where $x_i = \sign(\mathfrak{c}(i)) \cdot \exp(S_{|\mathfrak{c}(i)|}(\mathbf{t}) \cdot \pi\im)$.
\end{defn}

\indent The idea behind $\Phi_{\mathfrak{c}}$ is to construct $\mathbf{x} = \Phi_{\mathfrak{c}}(\mathbf{t})$ by setting each $x_i$ to be equal or antipodal to one of the $m$ points
\begin{equation}
	\label{eq:Phi_c_example}
	\exp(S_1(\mathbf{t}) \cdot \pi \im), \dots, \exp(S_m(\mathbf{t}) \cdot \pi \im) \in \Sp^1.
\end{equation}
The choice is determined by $\mathfrak{c}$: $x_i$ is equal to the $j$-th point $\exp(S_j(\mathbf{t}) \cdot \pi \im)$ if $\mathfrak{c}(i)=j$ or antipodal to it if $\mathfrak{c}(i)=-j$. Moreover, if $\mathbf{t}$ is a point in the interior of $\Delta_{m-1}$ (i.e. $t_j>0$ for all $1 \leq j \leq m$), then the points in Equation (\ref{eq:Phi_c_example}) are all distinct and appear in anticlockwise order from left to right. We can also check that $\sigma(x_i/x_1) = \sign(\mathfrak{c}(i))$, so $\widetilde{x}_i = \exp(S_{|\mathfrak{c}(i)|}(\mathbf{t}) \cdot \pi \im)$. These facts imply that the cluster structure induced by $\mathbf{x}$ is $\mathfrak{c}$. We later calculate the cluster structure induced by an arbitrary point in the image of $\Phi_{\mathfrak{c}}$ in Lemma \ref{lemma:cluster_boundary}.\\
\indent Conversely, if $\mathfrak{c}$ is the cluster structure induced by $\mathbf{x}$, then $\mathbf{x}$ must be in the image of $\Phi_{\mathfrak{c}}$.

\begin{prop}
	\label{prop:cluster_structure_x}
	Let $\mathbf{x} \in \{1\} \times \T^{n-1}$ and let $\mathfrak{c} := \mathfrak{c}_{\mathbf{x}}$ be the $(m,n)$-cluster structure induced by $\mathbf{x}$. Choose indices $k_j$ for $1 \leq j \leq m$ as in Definition \ref{def:cluster_structure_x}. Let $t_j := d(\widetilde{x}_{k_{j}}, \widetilde{x}_{k_{j+1}})/\pi$ for $1 \leq j \leq m-1$ and $t_m := d(\widetilde{x}_{k_m}, e^{\pi \im})/\pi$. Then $\mathbf{t}:= (t_1, \dots, t_{m}) \in \Delta_{m-1}$ and $\mathbf{x} = \Phi_{\mathfrak{c}}(\mathbf{t})$.
\end{prop}
\begin{proof}
	By the definitions of $\widetilde{\mathbf{x}}$ and $\mathfrak{c}_{\mathbf{x}}$, the points $\widetilde{x}_{k_1}, \dots, \widetilde{x}_{k_m}$ appear in anticlockwise order in a semicircle from $\widetilde{x}_{k_1} = +1$ to $e^{\pi \im} = -1$. Then $S_{j}(\mathbf{t}) = \big(d(\widetilde{x}_{k_1},\widetilde{x}_{k_2}) + \cdots + d(\widetilde{x}_{k_{j-1}},\widetilde{x}_{k_j}) \big)/\pi = d(\widetilde{x}_{k_1},\widetilde{x}_{k_j})/\pi$ for $1 \leq j \leq m$. In particular, $S_{m}(\mathbf{t}) + t_{m} = \big(d(\widetilde{x}_{k_1}, \widetilde{x}_{k_m}) + d(\widetilde{x}_{k_m}, e^{\pi \im})\big)/\pi = 1$, so $\mathbf{t} \in \Delta_{m-1}$. These equations also imply $\widetilde{x}_{k_j} = \exp(S_j(\mathbf{t}) \cdot \pi\im)$. On the other hand, for any $1 \leq i \leq n$, $x_i = \sigma(x_i/x_1) \cdot \widetilde{x}_{i} = \sigma(x_i/x_1) \cdot \widetilde{x}_{k_j}$ for some $1 \leq j \leq m$ by construction of the $k_j$. Then $\mathfrak{c}(i) = \sigma(x_i/x_1) \cdot j$, so the $i$-th component of $\Phi_{\mathfrak{c}}(\mathbf{t})$ is $\sign(\mathfrak{c}(i)) \cdot \exp(S_{|\mathfrak{c}(i)|}(\mathbf{t}) \cdot \pi \im) = \sigma(x_i/x_1) \cdot \exp(S_{j}(\mathbf{t}) \cdot \pi \im) = \sigma(x_i/x_1) \cdot \widetilde{x}_{k_j} = x_i$. In conclusion, $\mathbf{x} = \Phi_{\mathfrak{c}}(\mathbf{t})$.
\end{proof}

The next proposition shows that $\mathfrak{c}$ determines the shape of $D_n(\mathbf{x})$.

\begin{prop}
	\label{prop:distances_Phi_c}
	Let $\mathfrak{c}$ be an $(m,n)$-cluster structure. Given $\mathbf{t} \in \Delta_{m-1}$, let $\mathbf{x} = \Phi_{\mathfrak{c}}(\mathbf{t})$ and $d_{ij} := d(x_i, x_j)$. Then
	\begin{equation}
		\label{eq:distances_Phi_c}
		d_{ij} =
		\begin{cases}
			|S_{|\mathfrak{c}(j)|}(\mathbf{t}) - S_{|\mathfrak{c}(i)|}(\mathbf{t})| \cdot \pi & \text{if } \sign(\mathfrak{c}(i)) = \sign(\mathfrak{c}(j)) \\
			(1 - |S_{|\mathfrak{c}(j)|}(\mathbf{t}) - S_{|\mathfrak{c}(i)|}(\mathbf{t})|) \cdot \pi & \text{if } \sign(\mathfrak{c}(i)) \neq \sign(\mathfrak{c}(j)).
		\end{cases}
	\end{equation}
\end{prop}
\begin{remark}
	Notice that, if $i \leq j$, then $|S_j(\mathbf{t}) - S_i(\mathbf{t})| = t_i + \dots + t_{j-1}$. Additionally, the two cases in Equation (\ref{eq:distances_Phi_c}) depend on $\mathfrak{c}$, not on $\mathbf{t}$, so $d_{ij}$ is an affine function of $\mathbf{t}$ regardless of the values of $\mathfrak{c}(i)$ and $\mathfrak{c}(j)$.
\end{remark}
\begin{proof}
	Let $a := |\mathfrak{c}(i)|$ and $b := |\mathfrak{c}(j)|$ and assume $a \leq b$ without loss of generality. If $a > b$, the proposition follows from the symmetry of $d$. Recall that $d_{ij} = \min \left( \arg(\frac{x_j}{x_i}), \arg(\frac{x_i}{x_j}) \right)$. If $\sign(\mathfrak{c}(i)) = \sign(\mathfrak{c}(j))$, then $\arg(\frac{x_j}{x_i}) = \arg\big(\exp( [S_{b}(\mathbf{t}) - S_{a}(\mathbf{t})] \cdot \pi \im) \big) = [S_{b}(\mathbf{t}) - S_{a}(\mathbf{t})] \cdot \pi$ and $\arg(\frac{x_i}{x_j}) = 2\pi - [S_{b}(\mathbf{t}) - S_{a}(\mathbf{t})] \cdot \pi$. Notice that $S_{b}(\mathbf{t}) \leq 1$, so $S_{b}(\mathbf{t}) - S_{a}(\mathbf{t}) \leq 1 \leq 2 - [S_{b}(\mathbf{t}) - S_{a}(\mathbf{t})]$. Thus, $d_{ij} = [S_{b}(\mathbf{t}) - S_{a}(\mathbf{t})] \cdot \pi$. If $\sign(\mathfrak{c}(i)) \neq \sign(\mathfrak{c}(j))$ instead, we have $\arg(\frac{x_j}{x_i}) = \arg\big(-\exp([S_{b}(\mathbf{t}) - S_{a}(\mathbf{t})] \cdot \pi \im) \big) = (1 + [S_{b}(\mathbf{t}) - S_{a}(\mathbf{t}) ]) \cdot \pi$ and $\arg(\frac{x_i}{x_j}) = (1 - [S_{b}(\mathbf{t}) - S_{a}(\mathbf{t})] ) \cdot \pi$. Hence, $d_{ij} = (1 - [S_{b}(\mathbf{t}) - S_{a}(\mathbf{t})] ) \cdot \pi$.
\end{proof}

\indent The previous two propositions achieve our objective of writing $D_n(\mathbf{x})$ in terms of the distances between points of $\widetilde{\mathbf{x}}$. Proposition \ref{prop:cluster_structure_x} shows that every $\mathbf{x} \in \{1\} \times \T^{n-1}$ can be written as $\Phi_{\mathfrak{c}}(\mathbf{t})$, where $\mathfrak{c} = \mathfrak{c}_{\mathbf{x}}$ and $\mathbf{t} \in \Delta_{m-1}$ is defined in terms of the distances between the components of $\widetilde{\mathbf{x}}$, while Proposition \ref{prop:distances_Phi_c} gives $D_n(\mathbf{x})$ in terms of $\mathbf{t}$. Additionally, we use this expression to verify that $\Phi_{\mathfrak{c}}$ is injective.

\begin{corollary}
	\label{cor:Phi_c_injective}
	For any $(m,n)$-cluster structure $\mathfrak{c}$, $\Phi_{\mathfrak{c}}$ and $D_n \circ \Phi_{\mathfrak{c}}$ are injective.
\end{corollary}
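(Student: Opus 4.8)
The plan is to prove injectivity of $\Phi_{\mathfrak{c}}$ first, and then deduce injectivity of $D_n \circ \Phi_{\mathfrak{c}}$ essentially for free. Suppose $\Phi_{\mathfrak{c}}(\mathbf{t}) = \Phi_{\mathfrak{c}}(\mathbf{t}')$ for $\mathbf{t}, \mathbf{t}' \in \Delta_{m-1}$. By the surjectivity of $|\mathfrak{c}|$ built into the definition of a cluster structure, for each $j \in \{1, \dots, m\}$ there is an index $i$ with $|\mathfrak{c}(i)| = j$; write $x_i$ and $x_i'$ for the corresponding components of the two images. From the formula in Definition \ref{def:phi_c}, $x_i = \sign(\mathfrak{c}(i)) \cdot \exp(S_j(\mathbf{t}) \cdot \pi\im)$ and likewise for $x_i'$, so the common sign cancels and we get $\exp(S_j(\mathbf{t}) \cdot \pi\im) = \exp(S_j(\mathbf{t}') \cdot \pi\im)$. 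Since $0 \leq S_j(\mathbf{t}), S_j(\mathbf{t}') \leq 1$ for every $j$ (as the $t_i$ are nonnegative and sum to $1$), the arguments $S_j(\mathbf{t}) \cdot \pi$ and $S_j(\mathbf{t}') \cdot \pi$ both lie in $[0, \pi] \subset [0, 2\pi)$, on which $\exp(\cdot\,\im)$ is injective. Hence $S_j(\mathbf{t}) = S_j(\mathbf{t}')$ for all $j \in \{1, \dots, m\}$.

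Now recover $\mathbf{t}$ from the $S_j$ values. Since $S_1(\mathbf{t}) = 0$ and $S_{j+1}(\mathbf{t}) = S_j(\mathbf{t}) + t_j$ for $1 \leq j \leq m-1$, we have $t_j = S_{j+1}(\mathbf{t}) - S_j(\mathbf{t})$ for $j < m$, and $t_m = 1 - S_m(\mathbf{t})$ because the coordinates sum to $1$ (equivalently $S_{m+1}(\mathbf{t}) = 1$). The same relations hold for $\mathbf{t}'$, so $S_j(\mathbf{t}) = S_j(\mathbf{t}')$ for all $j$ forces $t_j = t_j'$ for all $j$, i.e. $\mathbf{t} = \mathbf{t}'$. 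This proves $\Phi_{\mathfrak{c}}$ is injective.

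For $D_n \circ \Phi_{\mathfrak{c}}$, note that by Proposition \ref{prop:distances_Phi_c} the entry $d_{1i}$ of $D_n(\Phi_{\mathfrak{c}}(\mathbf{t}))$ equals $|S_{|\mathfrak{c}(i)|}(\mathbf{t}) - S_1(\mathbf{t})| \cdot \pi = S_{|\mathfrak{c}(i)|}(\mathbf{t}) \cdot \pi$ when $\sign(\mathfrak{c}(i)) = \sign(\mathfrak{c}(1)) = +1$, and $(1 - S_{|\mathfrak{c}(i)|}(\mathbf{t})) \cdot \pi$ when $\sign(\mathfrak{c}(i)) = -1$; in either case $S_{|\mathfrak{c}(i)|}(\mathbf{t})$ is determined by the matrix entry $d_{1i}$ together with the fixed data $\mathfrak{c}$. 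Picking, as above, one index $i$ realizing each value $j = |\mathfrak{c}(i)| \in \{1, \dots, m\}$, the matrix $D_n(\Phi_{\mathfrak{c}}(\mathbf{t}))$ thus determines all of $S_1(\mathbf{t}), \dots, S_m(\mathbf{t})$, hence $\mathbf{t}$ by the previous paragraph. So $D_n \circ \Phi_{\mathfrak{c}}$ is injective; a fortiori this also reconfirms injectivity of $\Phi_{\mathfrak{c}}$. There is no serious obstacle here — the only point requiring a little care is making sure every relevant $S_j(\mathbf{t})$ is actually witnessed by some coordinate, which is exactly the surjectivity condition on $|\mathfrak{c}|$, and keeping the arguments of the exponentials inside a half-period so that $\exp(\cdot\,\im)$ can be inverted.
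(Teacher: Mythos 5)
Your proof is correct and follows essentially the same route as the paper: both arguments recover the coordinates of $\mathbf{t}$ from entries of $D_n(\Phi_{\mathfrak{c}}(\mathbf{t}))$ using the explicit distance formula of Proposition \ref{prop:distances_Phi_c} together with the surjectivity of $|\mathfrak{c}|$ to pick a witness $k_j$ for each cluster index. The only cosmetic differences are that you read the values $S_j(\mathbf{t})$ off the first-row entries $d_{1k_j}$ (and add a standalone argument for the injectivity of $\Phi_{\mathfrak{c}}$ via injectivity of $\theta \mapsto e^{i\theta}$ on $[0,\pi]$), whereas the paper compares the consecutive entries $d(x_{k_r},x_{k_{r+1}})$, which encode the coordinates $t_r$ directly and yield both injectivity claims at once.
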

\begin{proof}
	Let $\mathbf{t}, \mathbf{s} \in \Delta_{m-1}$ such that $t_{r} \neq s_{r}$ for some $1 \leq r \leq m$. Let $\mathbf{x} := \Phi_{\mathfrak{c}}(\mathbf{t})$ and $\mathbf{y} := \Phi_{\mathfrak{c}}(\mathbf{s})$. Since $|\mathfrak{c}|:\{1, \dots, n\} \to \{1, \dots, m\}$ is surjective, we can choose $1 \leq k_1, \cdots, k_m \leq n$ such that $|\mathfrak{c}(k_j)| = j$. If $r < m$, then $|S_{r+1}(\mathbf{t}) - S_{r}(\mathbf{t})| = t_{r}$. By Proposition \ref{prop:distances_Phi_c}, either $d(x_{k_r},x_{k_{r+1}}) = t_r \pi$ and $d(y_{k_r},y_{k_{r+1}}) = s_r \pi$, or $d(x_{k_r},x_{k_{r+1}}) = (1-t_r) \pi$ and $d(y_{k_r},y_{k_{r+1}}) = (1-s_r) \pi$. In both cases, $d(x_{k_r},x_{k_{r+1}}) \neq d(y_{k_r},y_{k_{r+1}})$. If $r=m$, then $|S_1(\mathbf{t})-S_m(\mathbf{t})| = t_1 + \cdots +t_{m-1} = 1-t_m$, and we get $d(x_{k_r},x_{k_{1}}) \neq d(y_{k_r},y_{k_{1}})$. In any case, $D_n \circ \Phi_{\mathfrak{c}}(\mathbf{t}) = D_n(\mathbf{x}) \neq D_n(\mathbf{y}) = D_n \circ \Phi_{\mathfrak{c}}(\mathbf{s})$, and thus, $\Phi_{\mathfrak{c}}(\mathbf{t}) \neq \Phi_{\mathfrak{c}}(\mathbf{s})$.
\end{proof}

Let's now show that $D_n(\mathbf{x})$ is a convex combination of a set of matrices determined by $\mathfrak{c}$. These matrices are the image under $D_n$ of any of the points in the following definition.
\begin{defn}
	\label{def:cluster_structure_vertices}
	Let $\mathfrak{c}$ be an $(m,n)$-cluster structure. For $1 \leq k \leq m$, define $\mathbf{t}^{(k)} \in \Delta_{m-1}$ by $t^{(k)}_k := 1$ and $t^{(k)}_i := 0$ for $i \neq k$. Define $\mathbf{v}^{(k)}(\mathfrak{c}) := \Phi_{\mathfrak{c}}(\mathbf{t}^{(k)})$ and $V(\mathfrak{c}) := \{ \mathbf{v}^{(k)}(\mathfrak{c}) \mid 1 \leq k \leq m \}$.
\end{defn}

\begin{remark}
	\label{rmk:vertices_of_c}
	Let $\mathfrak{c}$ be an $(m,n)$-cluster structure, and $\mathbf{v}^{(k)} := \mathbf{v}^{(k)}(\mathfrak{c})$. Then
	\begin{equation*}
		v^{(k)}_i =
		\begin{cases}
			\sign(\mathfrak{c}(i)),		& |\mathfrak{c}(i)| \leq k\\
			-\sign(\mathfrak{c}(i)),	& |\mathfrak{c}(i)| > k.
		\end{cases}
	\end{equation*}
	This follows from $v^{(k)}_i = \sign(\mathfrak{c}(i)) \exp(S_{|\mathfrak{c}(i)|}(\mathbf{t}^{(k)}) \cdot \pi \im)$, which is the expression for the $i$-th component of $\Phi_{\mathfrak{c}}$, and the fact that $S_{j}(\mathbf{t}^{(k)}) = 0$ if $j \leq k$ and $S_{j}(\mathbf{t}^{(k)}) = 1$ when $j > k$.
\end{remark}

\begin{prop}
	\label{prop:cluster_convex_hull}
	Let $\mathfrak{c}$ be an $(m,n)$-cluster structure, and let $\mathbf{t} \in \Delta_{m-1}$. Then 
	\begin{equation*}
		D_n(\Phi_{\mathfrak{c}}(\mathbf{t})) = t_1 D_n(\mathbf{v}^{(1)}(\mathfrak{c})) + \cdots + t_m D_n(\mathbf{v}^{(m)}(\mathfrak{c})),
	\end{equation*}
	and, as a consequence, $\Im(D_n \circ \Phi_{\mathfrak{c}}) = \conv( D_n \circ V(\mathfrak{c}) )$.
\end{prop}
\begin{proof}
	Let $\mathbf{t} \in \Delta_{m-1}$ be arbitrary and let $\mathbf{x} := \Phi_{\mathfrak{c}}(\mathbf{t})$. Let $\mathbf{v}^{(k)} := \mathbf{v}^{(k)}(\mathfrak{c})$, and denote $d_{ij} := d(x_i, x_j)$ and $d_{ij}^{(k)} := d\big( v^{(k)}_i, v^{(k)}_j \big)$. We claim that $d_{ij} = t_1 d_{ij}^{(1)} + \cdots + t_m d_{ij}^{(m)}$.\\
	\indent Without loss of generality, suppose that $|\mathfrak{c}(i)| \leq |\mathfrak{c}(j)|$. Otherwise, the claim follows by symmetry of $d$. We have two cases depending on whether $\sign(\mathfrak{c}(i))$ and $\sign(\mathfrak{c}(j))$ are equal. If $\sign(\mathfrak{c}(i)) = \sign(\mathfrak{c}(j))$, Remark \ref{rmk:vertices_of_c} gives $v^{(k)}_i = -\sign(\mathfrak{c}(i)) = -\sign(\mathfrak{c}(j)) = v^{(k)}_j$ when $1 \leq k < |\mathfrak{c}(i)|$. Similarly, if $|\mathfrak{c}(j)| \leq k \leq m$, we have $v^{(k)}_i = \sign(\mathfrak{c}(i)) = \sign(\mathfrak{c}(j)) = v^{(k)}_j$. However, when $|\mathfrak{c}(i)| \leq k < |\mathfrak{c}(j)|$, $v^{(k)}_i = \sign(\mathfrak{c}(i)) = \sign(\mathfrak{c}(j)) = -v^{(k)}_j$. Together, these cases give $d_{ij}^{(k)} = \pi$ if $|\mathfrak{c}(i)| \leq k < |\mathfrak{c}(j)|$ and $d_{ij}^{(k)} = 0$ otherwise. Then
	\begin{align*}
		t_1 d_{ij}^{(1)} + \cdots + t_m d_{ij}^{(m)}
		&= \sum_{|\mathfrak{c}(i)| \leq k < |\mathfrak{c}(j)|} t_k \cdot \pi = |S_{|\mathfrak{c}(j)|}(\mathbf{t}) - S_{|\mathfrak{c}(i)|}(\mathbf{t})| \cdot \pi = d_{ij}
	\end{align*}
	by Proposition \ref{prop:distances_Phi_c}. If $\sign(\mathfrak{c}(i)) \neq \sign(\mathfrak{c}(j))$, the values of $d_{ij}^{(k)}$ are reversed and, similarly to the previous paragraph, we can show that $d_{ij}^{(k)} = 0$ if $|\mathfrak{c}(i)| \leq k < |\mathfrak{c}(j)|$ and $d_{ij}^{(k)} = \pi$ otherwise. Since $t_1 + \cdots + t_m = 1$, we get
	\begin{align*}
		t_1 d_{ij}^{(1)} + \cdots + t_m d_{ij}^{(m)}
		&= \sum_{1 \leq k < |\mathfrak{c}(i)|} t_k \cdot \pi + \sum_{|\mathfrak{c}(j)| \leq k \leq m} t_k \cdot \pi \\
		&= \pi - \sum_{|\mathfrak{c}(i)| \leq k < |\mathfrak{c}(j)|} t_k \cdot \pi
		= \left(1 - |S_{|\mathfrak{c}(j)|}(\mathbf{t}) - S_{|\mathfrak{c}(i)|}(\mathbf{t})| \right) \cdot \pi
		= d_{ij}.
	\end{align*}
\end{proof}

\subsection{The definition of $\St_n(\Sp^1)$}
\label{subsec:state_complex}
Let's turn now to the definition of the State Complex $\St_n(\Sp^1)$. Our definition of $V(\mathfrak{c})$ might be suggestive: the $n$-th State Complex is the collection of vertex sets of all possible cluster structures.

\begin{defn}
	\label{def:state_complex}
	The \emph{$n$-th State Complex} is the collection
	\begin{equation*}
		\St_n(\Sp^1) := \{ V(\mathfrak{c}) \mid \mathfrak{c} \text{ is an $(m,n)$-cluster structure with } 1 \leq m \leq n \}.
	\end{equation*}
\end{defn}

\begin{remark}
	\label{rmk:vertices_of_St_n}
	In Remark \ref{rmk:vertices_of_c}, we saw that every element of $V(\mathfrak{c})$ is a point $\mathbf{v} \in \{1\} \times \T^{n-1}$ such that $v_i \in \{+1, -1\}$ for all $2 \leq i \leq n$. Now we prove the converse: every such point is the vertex of some cluster structure $\mathfrak{c}$. Our candidate is the $(1,n)$-cluster structure $\mathfrak{c}: \{1, \dots, n\} \to \{ \pm 1\}$ defined by $\mathfrak{c}(i) = v_i$. Since $\Delta_0 = \{1\} \subset \R$, $V(\mathfrak{c})$ is a singleton. In fact, for $\mathbf{t} \in \Delta_0$, the $i$-th component of $\Phi_{\mathfrak{c}}(\mathbf{t})$ is $\sign(\mathfrak{c}(i)) \exp(S_{|\mathfrak{c}(i)|}(\mathbf{t}) \cdot \pi \im) = \sign(\mathfrak{c}(i)) \exp(S_{1}(\mathbf{t}) \cdot \pi \im) = \sign(\mathfrak{c}(i)) = v_i$. Hence, $V(\mathfrak{c}) = \Im(\Phi_{\mathfrak{c}}) = \{\mathbf{v}\}$.
\end{remark}

To prove that $\St_n(\Sp^1)$ is an abstract simplicial complex, we must find, for every $A \subset V(\mathfrak{c})$, a cluster structure $\mathfrak{c}_I$  such that $V(\mathfrak{c}_I) = A$. The next definition gives such a $\mathfrak{c}_I$.

\begin{defn}
	\label{def:Phi_c_restricted}
	Let $\mathfrak{c}$ be an $(m,n)$-cluster structure. Let $1 \leq \ell \leq m$, define $k_0 := 0$, and choose $1 \leq k_1 < \cdots < k_\ell \leq m$. Set $I := \{k_1, \dots, k_\ell\}$. Define the $(\ell,n)$-cluster structure $\mathfrak{c}_I$ by
	\begin{equation*}
		\mathfrak{c}_I(i) :=
		\begin{cases}
			\sign(\mathfrak{c}(i)) \cdot j,	& k_{j-1} < |\mathfrak{c}(i)| \leq k_{j} \text{ for some } 1 \leq j \leq \ell,\\
			-\sign(\mathfrak{c}(i)),		& k_{\ell \phantom{-1}} < |\mathfrak{c}(i)|.
		\end{cases}
	\end{equation*}
\end{defn}
Notice that $\mathfrak{c}_I$ is indeed an $(\ell, n)$-cluster structure because $|\mathfrak{c}_I(k_j)|=j$ for all $1 \leq j \leq \ell$, and $k_1=1$ implies $\mathfrak{c}_I(1) = \sign(\mathfrak{c}(1)) \cdot 1 = 1$. Moreover, our definition of $\mathfrak{c}_I$ is set up so that $\Phi_{\mathfrak{c}_I}$ is a restriction of $\Phi_{\mathfrak{c}}$.

\begin{lemma}
	\label{lemma:Phi_c_restricted}
	Let $\mathfrak{c}$ be an $(m,n)$-cluster structure. Let $1 \leq \ell \leq m$, choose $1 \leq k_1 < \cdots < k_\ell \leq m$, and set $I := \{k_1, \dots, k_\ell\}$. Let $\mathbf{t} \in \Delta_{m-1}$ such that $t_i = 0$ if $i \notin I$, and define $\mathbf{s} := (t_{k_1}, \dots, t_{k_\ell}) \in \Delta_{\ell-1}$. Then $\Phi_{\mathfrak{c}_I}(\mathbf{s}) = \Phi_{\mathfrak{c}}(\mathbf{t})$.
\end{lemma}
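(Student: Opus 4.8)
The plan is to prove the identity one coordinate at a time. Both $\Phi_{\mathfrak{c}_I}(\mathbf{s})$ and $\Phi_{\mathfrak{c}}(\mathbf{t})$ lie in $\{1\}\times\T^{n-1}$, so it suffices to check that their $i$-th components agree for every $1\le i\le n$. By Definition \ref{def:phi_c}, the $i$-th component of $\Phi_{\mathfrak{c}}(\mathbf{t})$ is $\sign(\mathfrak{c}(i))\cdot\exp\!\big(S_{|\mathfrak{c}(i)|}(\mathbf{t})\cdot\pi\im\big)$ and that of $\Phi_{\mathfrak{c}_I}(\mathbf{s})$ is $\sign(\mathfrak{c}_I(i))\cdot\exp\!\big(S_{|\mathfrak{c}_I(i)|}(\mathbf{s})\cdot\pi\im\big)$, so the problem reduces to comparing the two signs and the two phases.

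First I would record the effect of the hypothesis. Writing $I=\{k_1<\dots<k_\ell\}$ and using that $t_r=0$ whenever $r\notin I$, for any index $a$ we have $S_a(\mathbf{t})=\sum_{r<a}t_r=\sum_{k_p<a}t_{k_p}=\sum_{k_p<a}s_p$. Applying this with $a$ larger than $k_\ell$ gives $\sum_{p=1}^{\ell}s_p=\sum_{r\in I}t_r=1$, the last equality because $\mathbf{t}\in\Delta_{m-1}$; this simultaneously confirms that $\mathbf{s}\in\Delta_{\ell-1}$, so that $\Phi_{\mathfrak{c}_I}(\mathbf{s})$ is defined, and will be used in the wrap-around case below.

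Now I would split according to the two clauses of Definition \ref{def:Phi_c_restricted}. If $k_{j-1}<|\mathfrak{c}(i)|\le k_j$ for some $1\le j\le\ell$, then $\mathfrak{c}_I(i)=\sign(\mathfrak{c}(i))\cdot j$, so the signs coincide and $|\mathfrak{c}_I(i)|=j$; moreover the indices $k_p\in I$ with $k_p<|\mathfrak{c}(i)|$ are exactly $k_1,\dots,k_{j-1}$ --- here one must be careful that $k_j$ is \emph{not} among them when $|\mathfrak{c}(i)|=k_j$ --- so by the first step $S_{|\mathfrak{c}(i)|}(\mathbf{t})=s_1+\dots+s_{j-1}=S_j(\mathbf{s})=S_{|\mathfrak{c}_I(i)|}(\mathbf{s})$ and the $i$-th components agree. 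If instead $k_\ell<|\mathfrak{c}(i)|$, then $\mathfrak{c}_I(i)=-\sign(\mathfrak{c}(i))$ and $|\mathfrak{c}_I(i)|=1$, so the $i$-th component of $\Phi_{\mathfrak{c}_I}(\mathbf{s})$ equals $-\sign(\mathfrak{c}(i))\cdot\exp(S_1(\mathbf{s})\cdot\pi\im)=-\sign(\mathfrak{c}(i))$; on the other side every element of $I$ is $\le k_\ell<|\mathfrak{c}(i)|$, so $S_{|\mathfrak{c}(i)|}(\mathbf{t})=\sum_{p=1}^\ell s_p=1$, whence $\sign(\mathfrak{c}(i))\cdot\exp(S_{|\mathfrak{c}(i)|}(\mathbf{t})\cdot\pi\im)=\sign(\mathfrak{c}(i))\cdot\exp(\pi\im)=-\sign(\mathfrak{c}(i))$ as well.

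I do not expect a serious obstacle: the statement is a bookkeeping exercise about partial sums. The only points needing care are the boundary case $|\mathfrak{c}(i)|=k_j$ when deciding which $k_p$ are strictly below $|\mathfrak{c}(i)|$, and the sign flip in the second clause, which is exactly accounted for by the fact that $S_{|\mathfrak{c}(i)|}(\mathbf{t})$ hits the value $1$ (so $\exp(\,\cdot\,\pi\im)=-1$) precisely when $|\mathfrak{c}(i)|>k_\ell$. If one prefers a uniform presentation, the whole argument can be packaged as the single coordinate-wise identity $\sign(\mathfrak{c}_I(i))\exp(S_{|\mathfrak{c}_I(i)|}(\mathbf{s})\pi\im)=\sign(\mathfrak{c}(i))\exp(S_{|\mathfrak{c}(i)|}(\mathbf{t})\pi\im)$, established by the two-case split above.
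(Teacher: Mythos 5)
Your proposal is correct and follows essentially the same route as the paper: a coordinate-wise comparison split into the two cases of Definition \ref{def:Phi_c_restricted}, matching signs and partial sums $S_{|\mathfrak{c}(i)|}(\mathbf{t})=S_{|\mathfrak{c}_I(i)|}(\mathbf{s})$ in the first case and using $S_{|\mathfrak{c}(i)|}(\mathbf{t})=1$ together with $\exp(\pi\im)=-1$ to absorb the sign flip in the second. The only additions (checking $\mathbf{s}\in\Delta_{\ell-1}$ and flagging the boundary case $|\mathfrak{c}(i)|=k_j$) are harmless refinements of the same argument.
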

\begin{proof}
	Let $k_0 := 0$ and $1 \leq i \leq n$. We need two cases to verify that the $i$-th components of $\Phi_{\mathfrak{c}}(\mathbf{t})$ and $\Phi_{\mathfrak{c}_I}(\mathbf{s})$ agree. For the first case, suppose that $k_{j-1} < |\mathfrak{c}(i)| \leq k_j$ for some $1 \leq j \leq \ell$. Then $|\mathfrak{c}_I(i)| = j$, and
	\begin{equation*}
		S_{|\mathfrak{c}(i)|}(\mathbf{t}) = t_{k_1} + \cdots + t_{k_{j-1}} = S_{|\mathfrak{c}_I(i)|}(\mathbf{s}).
	\end{equation*}
	 Moreover, $\sign(\mathfrak{c}_I(i)) = \sign(\mathfrak{c}(i))$, so $\sign(\mathfrak{c}(i)) \exp( S_{|\mathfrak{c}(i)|}(\mathbf{t}) \cdot \pi \im) = \sign(\mathfrak{c}_I(i)) \exp( S_{|\mathfrak{c}_I(i)|}(\mathbf{s}) \cdot \pi \im)$. For the second case, suppose $|\mathfrak{c}(i)| > k_\ell$. Now we have $S_{|\mathfrak{c}(i)|}(\mathbf{t}) = t_{k_1} + \cdots + t_{k_{\ell}} = 1$, and since $|\mathfrak{c}_I(i)|=1$, we get
	\begin{equation*}
		\sign(\mathfrak{c}(i)) \exp( S_{|\mathfrak{c}(i)|}(\mathbf{t}) \cdot \pi \im) = - \sign(\mathfrak{c}(i)) = \sign(\mathfrak{c}_I(i)) = \sign(\mathfrak{c}_I(i)) \exp( S_{|\mathfrak{c}_I(i)|}(\mathbf{t}) \cdot \pi \im).
	\end{equation*}
	In either case, the $i$-th components of $\Phi_{\mathfrak{c}}(\mathbf{t})$ and $\Phi_{\mathfrak{c}_I}(\mathbf{s})$ agree.
\end{proof}

Conversely, for any $(m,n)$-cluster structure $\mathfrak{c}$ and any $\mathbf{x} \in \Im(\Phi_{\mathfrak{c}})$, $\mathfrak{c}_{\mathbf{x}}$ must be of the form $\mathfrak{c}_I$ for some $I \subset \{1, \dots, m\}$.
\begin{lemma}
	\label{lemma:cluster_boundary}
	Let $\mathfrak{c}$ be an $(m,n)$-cluster structure. Let $1 \leq \ell \leq m$, choose $1 \leq k_1 < \cdots < k_\ell \leq m$, and set $I := \{k_1, \dots, k_\ell\}$. Let $\mathbf{t} \in \Delta_{m-1}$ such that $t_i > 0$ if and only if $i \in I$, and define $\mathbf{x} = \Phi_{\mathfrak{c}}(\mathbf{t})$. Then $\mathfrak{c}_{\mathbf{x}} = \mathfrak{c}_I$.
\end{lemma}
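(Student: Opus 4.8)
The plan is to deduce this from Lemma~\ref{lemma:Phi_c_restricted} together with the interior-point case already noted after Definition~\ref{def:phi_c}. Set $\mathbf{s} := (t_{k_1}, \dots, t_{k_\ell})$. Since $t_i = 0$ for $i \notin I$ and $\sum_i t_i = 1$, we have $\mathbf{s} \in \Delta_{\ell-1}$, and Lemma~\ref{lemma:Phi_c_restricted} gives $\mathbf{x} = \Phi_{\mathfrak{c}}(\mathbf{t}) = \Phi_{\mathfrak{c}_I}(\mathbf{s})$; moreover every coordinate $t_{k_j}$ of $\mathbf{s}$ is strictly positive because $k_j \in I$, so $\mathbf{s}$ lies in the interior of $\Delta_{\ell-1}$. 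The observation recorded after Definition~\ref{def:phi_c} says precisely that the cluster structure induced by $\Phi_{\mathfrak{c}'}(\mathbf{s})$ equals $\mathfrak{c}'$ whenever $\mathbf{s}$ is an interior point; applying it with $\mathfrak{c}' = \mathfrak{c}_I$ yields $\mathfrak{c}_{\mathbf{x}} = \mathfrak{c}_I$.

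If one prefers to avoid relying on that informal remark, the same conclusion comes out of a direct computation of $\widetilde{\mathbf{x}}$. Since $\mathfrak{c}(1) = +1$ and $S_1(\mathbf{t}) = 0$ we have $x_1 = 1$, hence $\widetilde{x}_i = \sigma(x_i)\,x_i$ for all $i$. The key arithmetic point is that, because $t_i > 0$ exactly for $i \in I = \{k_1 < \dots < k_\ell\}$, one has $S_j(\mathbf{t}) = 1$ precisely when $j > k_\ell$ and $S_j(\mathbf{t}) < 1$ otherwise; writing $k_0 := 0$, when $k_{p-1} < |\mathfrak{c}(i)| \le k_p$ one gets $S_{|\mathfrak{c}(i)|}(\mathbf{t}) = t_{k_1} + \cdots + t_{k_{p-1}} \in [0,1)$. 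Feeding these into $x_i = \sign(\mathfrak{c}(i)) \exp\!\big(S_{|\mathfrak{c}(i)|}(\mathbf{t})\,\pi\im\big)$, and using $\sigma(\epsilon z) = \epsilon\,\sigma(z)$ (Remark~\ref{rmk:chirality_function}) together with $\sigma(\exp(\theta\pi\im)) = +1$ for $\theta \in [0,1)$ and $\sigma(-1) = -1$, one finds $\widetilde{x}_i = \exp\!\big((t_{k_1} + \cdots + t_{k_{p-1}})\,\pi\im\big)$ with $\sigma(x_i/x_1) = \sign(\mathfrak{c}(i))$ when $|\mathfrak{c}(i)| \le k_\ell$, and $\widetilde{x}_i = 1$ with $\sigma(x_i/x_1) = -\sign(\mathfrak{c}(i))$ when $|\mathfrak{c}(i)| > k_\ell$.

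It then remains to read off the anticlockwise ranks required by Definition~\ref{def:cluster_structure_x} (which do not depend on the auxiliary choice of representatives made there). The numbers $0, t_{k_1}, t_{k_1}+t_{k_2}, \dots, t_{k_1}+\cdots+t_{k_{\ell-1}}$ are $\ell$ strictly increasing elements of $[0,1)$, so $\{\widetilde{x}_1, \dots, \widetilde{x}_n\}$ has exactly $\ell$ points and $\exp\!\big((t_{k_1}+\cdots+t_{k_{p-1}})\,\pi\im\big)$ sits in position $p$ of the anticlockwise order starting from $\widetilde{x}_1 = 1$. Hence $\mathfrak{c}_{\mathbf{x}}(i) = \sign(\mathfrak{c}(i)) \cdot p$ when $k_{p-1} < |\mathfrak{c}(i)| \le k_p$ and $\mathfrak{c}_{\mathbf{x}}(i) = -\sign(\mathfrak{c}(i))$ when $|\mathfrak{c}(i)| > k_\ell$, which is exactly the formula defining $\mathfrak{c}_I$ in Definition~\ref{def:Phi_c_restricted}. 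The only subtlety — the ``main obstacle'', such as it is — is the bookkeeping around $\sigma$ at the boundary argument $\pi$: the two cases are separated precisely by whether $S_{|\mathfrak{c}(i)|}(\mathbf{t}) = 1$, and one must check that this dichotomy is forced by the hypothesis $t_i > 0 \iff i \in I$ rather than merely by $\mathbf{t}$ lying on some face of $\Delta_{m-1}$.
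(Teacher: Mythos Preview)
Your proposal is correct. The second half is essentially the paper's own argument: both compute the plateau pattern of the partial sums $S_j(\mathbf{t})$ forced by $t_i>0\iff i\in I$, deduce $\widetilde{x}_i$ and $\sigma(x_i/x_1)$ in the two regimes $|\mathfrak{c}(i)|\le k_\ell$ versus $|\mathfrak{c}(i)|>k_\ell$, and then read off the anticlockwise ranks to match Definition~\ref{def:Phi_c_restricted}.

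Your first argument, however, is a genuinely different and tidier route. By invoking Lemma~\ref{lemma:Phi_c_restricted} you rewrite $\mathbf{x}=\Phi_{\mathfrak{c}_I}(\mathbf{s})$ with $\mathbf{s}$ an \emph{interior} point of $\Delta_{\ell-1}$, thereby reducing the boundary case for $\mathfrak{c}$ to the interior case for $\mathfrak{c}_I$; the paper instead redoes the computation from scratch. The only caveat is the one you flag: the interior-point claim after Definition~\ref{def:phi_c} is stated informally and is explicitly forward-referenced to Lemma~\ref{lemma:cluster_boundary}, so taken at face value in the paper's logical flow your shortcut is circular. Your direct computation dissolves that worry, and in fact its interior-point special case ($I=\{1,\dots,m\}$) is exactly what would make the informal remark rigorous. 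What your reduction buys is conceptual clarity---it shows that Lemma~\ref{lemma:cluster_boundary} is really just ``Lemma~\ref{lemma:Phi_c_restricted} plus the interior case''---while the paper's self-contained computation avoids any appearance of relying on an unproved remark.
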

\begin{proof}
	Let $k_0 := 0$. By hypothesis,
	\begin{equation}
		\label{eq:cluster_boundary}
		0 = S_1(\mathbf{t}) = \cdots = S_{k_1}(\mathbf{t})
		< S_{k_1+1}(\mathbf{t}) = \cdots = S_{k_2}(\mathbf{t}) < \cdots < S_{k_\ell+1}(\mathbf{t}) = \cdots = S_{m+1}(\mathbf{t}) = 1.
	\end{equation}
	Recall that $x_i = \sign(\mathfrak{c}(i)) \exp(S_{|\mathfrak{c}(i)|}(\mathbf{t}) \cdot \pi \im)$ by definition of $\Phi_{\mathfrak{c}}$. If $k_{j-1} < |\mathfrak{c}(i)| \leq k_{j}$ for some $1 \leq j \leq \ell$, then $S_{|\mathfrak{c}(i)|}(\mathbf{t})$ is strictly less than $S_{k_\ell+1}(\mathbf{t}) = 1$. Hence, $\arg\big(\exp(S_{|\mathfrak{c}(i)|}(\mathbf{t}) \cdot \pi \im) \big)$ lies on the interval $[0, \pi)$, so $\sigma(x_i/x_1) = \sigma(x_i) = \sign(\mathfrak{c}(i))$ by Remark \ref{rmk:chirality_function}. Together with the equality $S_{|\mathfrak{c}(i)|}(\mathbf{t}) = S_{k_{j}}(\mathbf{t})$, we get $\widetilde{x}_i = \sigma(x_i/x_1) x_i = \exp(S_{|\mathfrak{c}(i)|}(\mathbf{t}) \cdot \pi \im) = \exp(S_{k_{j}}(\mathbf{t}) \cdot \pi \im)$. This reasoning applies to $k_{j}$ in particular, so $\widetilde{x}_i = \widetilde{x}_{k_{j}}$. If $k_\ell < |\mathfrak{c}(i)|$, then $x_{i} = \sign(\mathfrak{c}(i)) \exp(S_{k_\ell}(\mathbf{t}) \cdot \pi \im) = \sign(\mathfrak{c}(i)) \exp(\pi \im) = - \sign(\mathfrak{c}(i))$. This yields $\widetilde{x}_i = +1 = \widetilde{x}_1$.\\
	\indent Together with Equation (\ref{eq:cluster_boundary}), the expressions $\widetilde{x}_{k_{j}} = \exp(S_{k_{j}}(\mathbf{t}) \cdot \pi \im)$ from the previous paragraph imply that the points $\widetilde{x}_{k_1}, \widetilde{x}_{k_2}, \cdots, \widetilde{x}_{k_{\ell}}$ appear in anticlockwise order. Moreover, $\widetilde{x}_i$ equals $\widetilde{x}_{k_{j}}$ when $k_{j-1} < |\mathfrak{c}(i)| \leq k_{j}$ and $1 \leq j \leq \ell$, and $\widetilde{x}_{k_1}$ when $k_\ell < |\mathfrak{c}(i)|$. In short, $\widetilde{x}_i = \widetilde{x}_{k_j}$ for some $1 \leq j \leq \ell$, so the indices $k_1, \dots, k_{\ell}$ satisfy Definition \ref{def:cluster_structure_x}. For this reason,
	\begin{itemize}
		\item $|\mathfrak{c}_{\mathbf{x}}(i)| = 1$ if $1 \leq |\mathfrak{c}(i)| \leq k_1$,
		\item $|\mathfrak{c}_{\mathbf{x}}(i)| = j$ if $k_{j-1} < |\mathfrak{c}(i)| \leq k_{j}$, and
		\item $|\mathfrak{c}_{\mathbf{x}}(i)| = 1$ if $k_\ell < |\mathfrak{c}(i)|$.
	\end{itemize}
	This is the desired expression for $\mathfrak{c}_{\mathbf{x}}$ up to sign. If $k_{j-1} < |\mathfrak{c}(i)| \leq k_{j}$ and $1 \leq j \leq \ell$, then $\sign(\mathfrak{c}_{\mathbf{x}}(i))$, $\sign(\mathfrak{c}(i))$, and $\sigma(x_i/x_1)$ are all equal: $\sign(\mathfrak{c}_{\mathbf{x}}(i)) = \sigma(x_i/x_1)$ holds by definition of $\mathfrak{c}_{\mathbf{x}}$, while $\sign(\mathfrak{c}(i)) = \sigma(x_i/x_1)$ follows from the reasoning in the first paragraph. In contrast, when $k_\ell < |\mathfrak{c}(i)|$, we have $x_{i} = - \sign(\mathfrak{c}(i))$, so $\sign(\mathfrak{c}_{\mathbf{x}}(i)) = \sigma(x_i/x_1) = - \sign(\mathfrak{c}(i))$. Thus, $\sign(\mathfrak{c}_{\mathbf{x}}(i)) = \sign(\mathfrak{c}(i))$ if and only if $1 \leq |\mathfrak{c}(i)| \leq k_\ell$.
\end{proof}

Thanks to Lemma \ref{lemma:Phi_c_restricted}, we can prove that every non-empty subset of $V(\mathfrak{c})$ is the vertex set of a cluster structure of the form $\mathfrak{c}_I$.
\begin{corollary}
	\label{cor:cluster_boundary}
	Let $\mathfrak{c}$ be an $(m,n)$-cluster structure. Let $1 \leq k_1 < \cdots < k_\ell \leq m$, and set $I := \{k_1, \dots, k_\ell\}$. Then $\mathbf{v}^{(i)}(\mathfrak{c}_{I}) = \mathbf{v}^{(k_i)}(\mathfrak{c})$ for $1 \leq i \leq \ell$ and $V(\mathfrak{c}_{I}) = \{ \mathbf{v}^{(k_1)}(\mathfrak{c}), \dots, \mathbf{v}^{(k_\ell)}(\mathfrak{c}) \} \subset V(\mathfrak{c})$. As a consequence, $\St_n(\Sp^1)$ is an abstract simplicial complex.
\end{corollary}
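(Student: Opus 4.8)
The plan is to deduce both displayed identities directly from Lemma \ref{lemma:Phi_c_restricted}, and then read off the simplicial-complex claim with the help of the injectivity established in Corollary \ref{cor:Phi_c_injective}.

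First I would compute $\mathbf{v}^{(i)}(\mathfrak{c}_I)$ for each $1 \le i \le \ell$. By Definition \ref{def:cluster_structure_vertices}, $\mathbf{v}^{(i)}(\mathfrak{c}_I) = \Phi_{\mathfrak{c}_I}(\mathbf{t}^{(i)})$, where $\mathbf{t}^{(i)} \in \Delta_{\ell-1}$ is the $i$-th standard vertex. Consider instead the point $\mathbf{t}^{(k_i)} \in \Delta_{m-1}$, which has its $k_i$-th coordinate equal to $1$ and all others equal to $0$; in particular $t_j = 0$ for every $j \notin I$, so Lemma \ref{lemma:Phi_c_restricted} applies and gives $\Phi_{\mathfrak{c}_I}(\mathbf{s}) = \Phi_{\mathfrak{c}}(\mathbf{t}^{(k_i)})$ with $\mathbf{s} := (t_{k_1}, \dots, t_{k_\ell})$. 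Because $k_1 < \cdots < k_\ell$, the only index $k_j$ with $t_{k_j} = 1$ is $k_i$, i.e. $\mathbf{s} = \mathbf{t}^{(i)}$. Hence $\mathbf{v}^{(i)}(\mathfrak{c}_I) = \Phi_{\mathfrak{c}_I}(\mathbf{t}^{(i)}) = \Phi_{\mathfrak{c}}(\mathbf{t}^{(k_i)}) = \mathbf{v}^{(k_i)}(\mathfrak{c})$. Taking the union over $i$ yields $V(\mathfrak{c}_I) = \{\mathbf{v}^{(k_1)}(\mathfrak{c}), \dots, \mathbf{v}^{(k_\ell)}(\mathfrak{c})\} \subseteq V(\mathfrak{c})$.

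For the simplicial-complex statement, I would first record that the vertices $\mathbf{v}^{(1)}(\mathfrak{c}), \dots, \mathbf{v}^{(m)}(\mathfrak{c})$ of an $(m,n)$-cluster structure are pairwise distinct: this is immediate from the injectivity of $\Phi_{\mathfrak{c}}$ (Corollary \ref{cor:Phi_c_injective}) together with the fact that $\mathbf{t}^{(1)}, \dots, \mathbf{t}^{(m)}$ are distinct points of $\Delta_{m-1}$. Consequently $|V(\mathfrak{c})| = m$. Now take any $A \in \St_n(\Sp^1)$, say $A = V(\mathfrak{c})$ for an $(m,n)$-cluster structure $\mathfrak{c}$, and any nonempty $B \subseteq A$. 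Writing the $\ell := |B|$ elements of $B$ as $\mathbf{v}^{(k_1)}(\mathfrak{c}), \dots, \mathbf{v}^{(k_\ell)}(\mathfrak{c})$ with $1 \le k_1 < \cdots < k_\ell \le m$ and setting $I := \{k_1, \dots, k_\ell\}$, the first part gives $B = V(\mathfrak{c}_I)$. Since $\mathfrak{c}_I$ is an $(\ell,n)$-cluster structure with $1 \le \ell \le m \le n$, we conclude $B \in \St_n(\Sp^1)$. As $\St_n(\Sp^1)$ is nonempty (it contains the singletons $V(\mathfrak{c})$ of $(1,n)$-cluster structures) and closed under passing to nonempty subsets, it is an abstract simplicial complex.

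I do not expect a serious obstacle: the real content is in Lemma \ref{lemma:Phi_c_restricted} and Corollary \ref{cor:Phi_c_injective}. The only points requiring care are (i) checking that the restriction $\mathbf{s}$ of $\mathbf{t}^{(k_i)}$ to the coordinates indexed by $I$ is exactly the standard vertex $\mathbf{t}^{(i)}$ of $\Delta_{\ell-1}$, which is just bookkeeping with the increasing enumeration of $I$; and (ii) invoking injectivity so that an arbitrary nonempty subset of $V(\mathfrak{c})$ is genuinely indexed by a well-defined increasing tuple of elements of $\{1, \dots, m\}$, making $\mathfrak{c}_I$ unambiguously defined via Definition \ref{def:Phi_c_restricted}.
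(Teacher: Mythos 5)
Your proposal is correct and follows essentially the same route as the paper: both deduce $\mathbf{v}^{(i)}(\mathfrak{c}_I) = \mathbf{v}^{(k_i)}(\mathfrak{c})$ by applying Lemma \ref{lemma:Phi_c_restricted} to the vertex point $\mathbf{t}^{(k_i)} \in \Delta_{m-1}$ and identifying its restriction to the coordinates in $I$ with the standard vertex $\mathbf{t}^{(i)} \in \Delta_{\ell-1}$. Your extra bookkeeping (distinctness of the $\mathbf{v}^{(k)}(\mathfrak{c})$ via Corollary \ref{cor:Phi_c_injective} and closure of $\St_n(\Sp^1)$ under nonempty subsets) only spells out what the paper leaves implicit in the "as a consequence" clause.
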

\begin{proof}
	Let $\mathbf{t}^{(k_i)} \in \Delta_{m-1}$ and $\mathbf{s}^{(i)} \in \Delta_{\ell-1}$ be such that $t^{(k_i)}_{k_i} = 1$, $t^{(k_i)}_{j} = 0$ for $j \neq k_i$, $s^{(i)}_{i} = 1$, and $s^{(i)}_{j} = 0$ for $j \neq i$. Notice that $\mathbf{s}^{(i)} = (t^{(k_i)}_{k_1}, \dots, t^{(k_i)}_{k_\ell})$, so Lemma \ref{lemma:Phi_c_restricted} gives $\mathbf{v}^{(i)}(\mathfrak{c}_I) = \Phi_{\mathfrak{c}_I}(\mathbf{s}^{(i)}) = \Phi_{\mathfrak{c}}(\mathbf{t}^{(k_i)}) = \mathbf{v}^{(k_i)}(\mathfrak{c})$.
\end{proof}

A natural question at this point is whether cluster structures are determined by their vertices. As we saw in Remark \ref{rmk:vertices_of_St_n}, this is true for cluster structures with one degree of liberty. However, for every $(m,n)$-cluster structure $\mathfrak{c}$ with $m \geq 2$, the following definition gives a different $(m,n)$-cluster structure with vertex set $V(\mathfrak{c})$.

\begin{defn}
	\label{def:cluster_structure_transpose}
	Let $\mathfrak{c}$ be a $(m,n)$ cluster structure. Define the $(m,n)$-cluster structure $\rho \cdot \mathfrak{c}:\{1, \dots, n\} \to \{\pm 1, \dots, \pm m\}$ by
	\begin{equation}
		\label{eq:c_transpose}
		\rho \cdot \mathfrak{c}(i) :=
		\begin{cases}
			-\sign(\mathfrak{c}(i)) \left( m+2-|\mathfrak{c}(i)| \right), & \text{if } |\mathfrak{c}(i)| \neq 1,\\
			\mathfrak{c}(i), & \text{if } |\mathfrak{c}(i)| = 1.
		\end{cases}
	\end{equation}
\end{defn}

\indent We chose the notation $\rho \cdot \mathfrak{c}$ because, as we will soon see, $\rho \cdot \mathfrak{c}_{\mathbf{x}} = \mathfrak{c}_{\rho(\mathbf{x})}$. This also grants intuition for the formula in Equation (\ref{eq:c_transpose}). In the same way that the reflection $\rho$ exchanges the anticlockwise and clockwise directions, the function $c \mapsto m+2-c$ in the first case of Equation (\ref{eq:c_transpose}) sends the sequence $2, \dots, m$ to $m, \dots, 2$. As for the second case, $\rho \cdot \mathfrak{c}(i)$ equals $\mathfrak{c}(i)$ when the latter is $\pm 1$ and, in that case, $\Phi_{\mathfrak{c}}(\mathbf{t}) = \pm 1$ is a fixed point of $\rho$.\\
\indent We need some preparation to verify our claims about $\rho \cdot \mathfrak{c}$. The first step is defining an involution $\rho_\Delta: \Delta_{m-1} \to \Delta_{m-1}$ by
\begin{equation*}
	\rho_\Delta(t_1, \dots, t_{m}) := (t_{m}, \dots, t_1).
\end{equation*}

\begin{prop}
	\label{prop:c_transpose}
	Let $\mathfrak{c}$ be an $(m,n)$-cluster structure. Then $\rho \circ \Phi_\mathfrak{c} = \Phi_{\rho \cdot \mathfrak{c}} \circ \rho_\Delta$.
\end{prop}
\begin{proof}
	Let $\mathbf{t} \in \Delta_{m-1}$. If $|\mathfrak{c}(j)|=1$,
	\begin{align*}
		\big( \Phi_{\rho \cdot \mathfrak{c}} \circ \rho_\Delta(\mathbf{t}) \big)_j
		&= \sign(\rho \cdot \mathfrak{c}(j)) \exp\left(S_{|\rho \cdot \mathfrak{c}(j)|}(\rho_\Delta(\mathbf{t})) \cdot \pi \im \right) \\
		& = \sign(\mathfrak{c}(j)) \exp\left(S_{1}(\rho_\Delta(\mathbf{t})) \cdot \pi \im \right) \\
		&= \sign(\mathfrak{c}(j)) = \rho(\sign(\mathfrak{c}(j)))
		= \big( \rho \circ \Phi_{\mathfrak{c}} (\mathbf{t}) \big)_j.
	\end{align*}
	Notice that $S_{m+2-k}(\rho_\Delta(\mathbf{t})) = t_{m} + \cdots + t_{k} =  1-S_{k}(\mathbf{t})$ for $1 \leq k \leq m+1$. Then, if $|\mathfrak{c}(j)| \neq 1$, we get
	\begin{align*}
		\big( \Phi_{\rho \cdot \mathfrak{c}} \circ \rho_\Delta(\mathbf{t}) \big)_j
		&= \sign(\rho \cdot \mathfrak{c}(j)) \cdot \exp\left(S_{|\rho \cdot \mathfrak{c}(j)|}(\rho_\Delta(\mathbf{t})) \cdot \pi \im \right) \\
		&= -\sign(\mathfrak{c}(j)) \cdot \exp\left( S_{m+2-|\mathfrak{c}(j)|}(\rho_\Delta(\mathbf{t})) \cdot \pi \im \right) \\
		&= -\sign(\mathfrak{c}(j)) \cdot \exp\left( [1-S_{|\mathfrak{c}(j)|}(\mathbf{t})] \cdot \pi \im \right) \\
		&= +\sign(\mathfrak{c}(j)) \cdot \exp\left( -S_{|\mathfrak{c}(j)|}(\mathbf{t}) \cdot \pi \im \right) \\
		&= \rho\left[ \sign(\mathfrak{c}(j)) \cdot \exp\left( S_{|\mathfrak{c}(j)|}(\mathbf{t}) \cdot \pi \im \right) \right]
		= \big( \rho \circ \Phi_\mathfrak{c}(\mathbf{t}) \big)_j.
	\end{align*}
\end{proof}

\begin{corollary}
	\label{cor:vertices_c_transpose}
	Let $\mathfrak{c}$ be an $(m,n)$-cluster structure. For all $1 \leq k \leq m$, $\mathbf{v}^{(k)}(\rho \cdot \mathfrak{c}) = \mathbf{v}^{(m+1-k)}(\mathfrak{c})$ and, thus, $V(\rho \cdot \mathfrak{c}) = V(\mathfrak{c})$.
\end{corollary}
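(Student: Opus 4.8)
The plan is to deduce the statement directly from Proposition \ref{prop:c_transpose} together with the explicit description of the vertices in Remark \ref{rmk:vertices_of_c}. Recall that $\mathbf{v}^{(k)}(\mathfrak{c}) = \Phi_{\mathfrak{c}}(\mathbf{t}^{(k)})$, where $\mathbf{t}^{(k)} \in \Delta_{m-1} \subset \R^{m}$ is the $k$-th standard basis vector. The first elementary observation I would record is the identity $\rho_\Delta(\mathbf{t}^{(k)}) = \mathbf{t}^{(m+1-k)}$: reversing the order of the $m$ coordinates of the vector with a single $1$ in position $k$ moves that $1$ to position $m+1-k$.

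Next I would rewrite Proposition \ref{prop:c_transpose}. Since $\rho_\Delta$ is an involution, the identity $\rho \circ \Phi_{\mathfrak{c}} = \Phi_{\rho \cdot \mathfrak{c}} \circ \rho_\Delta$ is equivalent to $\Phi_{\rho \cdot \mathfrak{c}} = \rho \circ \Phi_{\mathfrak{c}} \circ \rho_\Delta$. Evaluating at $\mathbf{t}^{(k)}$ and using the previous identity gives
\[
	\mathbf{v}^{(k)}(\rho \cdot \mathfrak{c}) = \Phi_{\rho \cdot \mathfrak{c}}(\mathbf{t}^{(k)}) = \rho\big(\Phi_{\mathfrak{c}}(\rho_\Delta(\mathbf{t}^{(k)}))\big) = \rho\big(\Phi_{\mathfrak{c}}(\mathbf{t}^{(m+1-k)})\big) = \rho\big(\mathbf{v}^{(m+1-k)}(\mathfrak{c})\big).
\]

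The only remaining point is to eliminate the $\rho$ on the right-hand side. Here I would invoke Remark \ref{rmk:vertices_of_c}: every component of $\mathbf{v}^{(m+1-k)}(\mathfrak{c})$ equals $\pm\sign(\mathfrak{c}(i)) \in \{+1,-1\}$, and $\rho(z) = z^{-1}$ fixes both $+1$ and $-1$, so $\rho$ acts as the identity on this point. Hence $\mathbf{v}^{(k)}(\rho \cdot \mathfrak{c}) = \mathbf{v}^{(m+1-k)}(\mathfrak{c})$ for every $1 \leq k \leq m$; taking the union over $k$ and reindexing by $j = m+1-k$ yields $V(\rho \cdot \mathfrak{c}) = V(\mathfrak{c})$. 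There is essentially no obstacle in this argument — the only subtlety worth flagging is precisely the $\rho$-invariance of the vertices, which is what makes the statement hold without a stray $\rho$ on the right.
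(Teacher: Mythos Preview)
Your argument is correct and mirrors the paper's own proof: both use the identity $\rho_\Delta(\mathbf{t}^{(k)}) = \mathbf{t}^{(m+1-k)}$ together with Proposition~\ref{prop:c_transpose} to obtain $\mathbf{v}^{(k)}(\rho \cdot \mathfrak{c}) = \rho(\mathbf{v}^{(m+1-k)}(\mathfrak{c}))$, and then drop the $\rho$ because the vertex coordinates lie in $\{+1,-1\}$. You are in fact slightly more explicit than the paper in justifying this last step via Remark~\ref{rmk:vertices_of_c}.
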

\begin{proof}
	Notice that $\mathbf{t}^{(k)} = \rho_{\Delta}(\mathbf{t}^{(m+1-k)})$, so
	\begin{equation*}
		\mathbf{v}^{(k)}(\rho \cdot \mathfrak{c}) = \Phi_{\rho \cdot \mathfrak{c}}(\mathbf{t}^{(k)}) = \rho \cdot \Phi_{\mathfrak{c}}(\mathbf{t}^{(m+1-k)}) = \rho(\mathbf{v}^{(m+1-k)}(\mathfrak{c})) = \mathbf{v}^{(m+1-k)}(\mathfrak{c}).
	\end{equation*}
\end{proof}

\begin{corollary}
	\label{cor:c_transpose}
	For any $\mathbf{x} \in \{1\} \times \T^{n-1}$, $\rho \cdot \mathfrak{c}_{\mathbf{x}} = \mathfrak{c}_{\rho(\mathbf{x})}$.
\end{corollary}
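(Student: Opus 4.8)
The plan is to reduce the statement to the three facts just established: Proposition \ref{prop:cluster_structure_x}, which exhibits any $\mathbf{x} \in \{1\}\times\T^{n-1}$ as $\Phi_{\mathfrak{c}_{\mathbf{x}}}(\mathbf{t})$ for a suitable $\mathbf{t}\in\Delta_{m-1}$; Proposition \ref{prop:c_transpose}, which intertwines $\rho$ with the passage $\mathfrak{c}\rightsquigarrow\rho\cdot\mathfrak{c}$ via $\rho\circ\Phi_{\mathfrak{c}} = \Phi_{\rho\cdot\mathfrak{c}}\circ\rho_\Delta$; and Lemma \ref{lemma:cluster_boundary}, which reads off the induced cluster structure of any point in the image of a $\Phi_{\mathfrak{c}}$. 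No new computation should be needed beyond substituting one into the next.

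Concretely, I would set $\mathfrak{c} := \mathfrak{c}_{\mathbf{x}}$, say an $(m,n)$-cluster structure, and pick the indices $k_1 < \cdots < k_m$ together with $\mathbf{t} = (t_1,\dots,t_m)\in\Delta_{m-1}$ as in Proposition \ref{prop:cluster_structure_x}, so that $\mathbf{x} = \Phi_{\mathfrak{c}}(\mathbf{t})$. The crucial observation is that $\mathbf{t}$ lies in the \emph{interior} of $\Delta_{m-1}$: since $x_1 = 1$, the points $\widetilde{x}_{k_1},\dots,\widetilde{x}_{k_m}$ are pairwise distinct and lie in anticlockwise order inside the half-open semicircle $\arg\inv([0,\pi))$ (Definition \ref{def:x_tilde}), so each consecutive distance $d(\widetilde{x}_{k_j},\widetilde{x}_{k_{j+1}})$ is positive and $d(\widetilde{x}_{k_m}, e^{\pi\im}) > 0$ as well; hence every $t_j > 0$. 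Applying Proposition \ref{prop:c_transpose} gives $\rho(\mathbf{x}) = \rho(\Phi_{\mathfrak{c}}(\mathbf{t})) = \Phi_{\rho\cdot\mathfrak{c}}(\rho_\Delta(\mathbf{t}))$, and $\rho_\Delta(\mathbf{t}) = (t_m,\dots,t_1)$ is again an interior point. Taking $I := \{1,\dots,m\}$ (that is, $\ell = m$ and $k_i = i$) in Definition \ref{def:Phi_c_restricted} one checks immediately that $(\rho\cdot\mathfrak{c})_I = \rho\cdot\mathfrak{c}$. Since $\rho_\Delta(\mathbf{t})$ satisfies $t_i > 0$ if and only if $i\in I$, Lemma \ref{lemma:cluster_boundary} applied to the cluster structure $\rho\cdot\mathfrak{c}$ and the point $\rho_\Delta(\mathbf{t})$ yields $\mathfrak{c}_{\rho(\mathbf{x})} = (\rho\cdot\mathfrak{c})_I = \rho\cdot\mathfrak{c} = \rho\cdot\mathfrak{c}_{\mathbf{x}}$, which is exactly the assertion; note this also forces $\mathfrak{c}_{\rho(\mathbf{x})}$ to have $m$ degrees of freedom, as it must.

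I do not expect a genuine obstacle: once the bookkeeping is in place the claim drops out by direct substitution. The one point that demands care is verifying that the parameter $\mathbf{t}$ produced by Proposition \ref{prop:cluster_structure_x} is an interior point of the simplex, since that is what allows Lemma \ref{lemma:cluster_boundary} to be invoked with $I$ the full index set $\{1,\dots,m\}$; this is precisely where the normalization $\mathbf{x}\in\{1\}\times\T^{n-1}$ and the half-openness of the semicircle in Definition \ref{def:x_tilde} enter. Everything else is routine.
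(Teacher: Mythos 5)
Your proof is correct and follows essentially the same route as the paper: write $\mathbf{x}=\Phi_{\mathfrak{c}_{\mathbf{x}}}(\mathbf{t})$ with $\mathbf{t}$ interior via Proposition \ref{prop:cluster_structure_x}, push through Proposition \ref{prop:c_transpose} to get $\rho(\mathbf{x})=\Phi_{\rho\cdot\mathfrak{c}_{\mathbf{x}}}(\rho_\Delta(\mathbf{t}))$, and apply Lemma \ref{lemma:cluster_boundary} with $I=\{1,\dots,m\}$. The only difference is that you spell out why $\mathbf{t}$ is interior (distinctness of the $\widetilde{x}_{k_j}$ and the half-open semicircle), a point the paper asserts without elaboration.
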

\begin{proof}
	Suppose that $\mathfrak{c}_{\mathbf{x}}$ is an $(m,n)$-cluster structure. By Proposition \ref{prop:cluster_structure_x}, there exists $\mathbf{t} \in \Delta_{m-1}$ such that $t_i > 0$ for all $1 \leq i \leq m$ and $\mathbf{x} = \Phi_{\mathfrak{c}}(\mathbf{t})$. Let $I := \{1, \dots, m\}$. By Proposition \ref{prop:c_transpose}, $\rho(\mathbf{x}) = \rho \circ \Phi_{\mathfrak{c}_\mathbf{x}}(\mathbf{t}) = \Phi_{\rho \cdot \mathfrak{c}_\mathbf{x}}(\rho_\Delta(\mathbf{t}))$. Since the $i$-th component of $\rho_\Delta(\mathbf{t})$ is positive if and only if $i \in I$, Lemma \ref{lemma:cluster_boundary} yields $\mathfrak{c}_{\rho(\mathbf{x})} = (\rho \cdot \mathfrak{c}_{\mathbf{x}})_I = \rho \cdot \mathfrak{c}_{\mathbf{x}}$.
\end{proof}

\subsection{The geometric realization of $\St_n(\Sp^1)$ is homeomorphic to $\Kn_n(\Sp^1)$.}
\label{subsec:geometric_realization}
Let's review the terminology from Section 1.3 of \cite{borsuk-ulam}. A finite set $\{\mathbf{v}_1, \dots, \mathbf{v}_m\} \subset \R^d$ is \emph{affinely independent} if the only $a_1, \dots, a_m \in \R$ such that $a_1 \mathbf{v}_1 + \cdots a_m \mathbf{v}_m = 0$ and $a_1 + \cdots + a_m = 0$ are $a_1 = \cdots = a_m = 0$. An \emph{$m$-simplex} $A \subset \R^d$ is the convex hull of a finite affinely independent set $V(A) \subset \R^d$ with $m+1$ points, and a face of $A$ is the convex hull of a subset of $V(A)$. A \emph{vertex} of $A$ is any point in $V(A)$. A nonempty family $\Delta$ of simplices is a \emph{geometric simplicial complex} if every face of $A \in \Delta$ is in $\Delta$ and for any $A, B \in \Delta$, $A \cap B$ is a face of both $A$ and $B$. The \emph{polyhedron} of $\Delta$ is $\|\Delta\| := \bigcup_{A \in \Delta} A \subset \R^d$. Every geometric simplicial complex has an associated abstract simplicial complex given by $K_\Delta := \{V(A) \mid A \in \Delta \}$, and any geometric realization $|K_\Delta|$ of $K_\Delta$ is homeomorphic to $\|\Delta\|$.\\
\indent Our objective in this section is to prove Theorem \ref{thm:geometric_realization_intro} by showing that
\begin{equation*}
	\Delta := \{ \conv(D_n \circ V(\mathfrak{c})) \mid \mathfrak{c} \text{ is an $(m,n)$-cluster structure} \}
\end{equation*}
is a geometric simplicial complex such that $\Kn_n(\Sp^1) = \|\Delta\| \cong |\St_n(\Sp^1)|$. The homeomorphism $\|\Delta\| \cong |\St_n(\Sp^1)|$ follows easily because $D_n$ induces a simplicial isomorphism from $\St_n(\Sp^1)$ to $K_\Delta = \{D_n \circ V(\mathfrak{c}) \mid \mathfrak{c} \text{ is an $(m,n)$-cluster structure}\}$. Since $\St_n(\Sp^1)$ is already an abstract simplicial complex by Corollary \ref{cor:cluster_boundary}, the only thing that we have to verify is that $D_n$ is injective on the vertex set of $\St_n(\Sp^1)$. Indeed, any two distinct $\mathbf{v}, \mathbf{w} \in V(\St_n(\Sp^1))$ have $v_1 = w_1 = 1$ and $v_i \neq w_i$ for some $2 \leq i \leq n$, so the $(1,i)$ entries of $D_n(\mathbf{v})$ and $D_n(\mathbf{w})$ are different. Thus, we have:

\begin{lemma}
	\label{lemma:St_n_Delta_isomorphic}
	$\St_n(\Sp^1)$ and $K_\Delta$ are isomorphic abstract simplicial complexes.
\end{lemma}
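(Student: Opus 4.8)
The plan is to show that $D_n$ restricted to the vertex set of $\St_n(\Sp^1)$ is injective, and that this alone forces the two simplicial structures to coincide. Recall from Remark \ref{rmk:vertices_of_St_n} that the vertices of $\St_n(\Sp^1)$ are exactly the $\mathbf{v}\in\{1\}\times\T^{n-1}$ with $v_i\in\{+1,-1\}$ for $2\le i\le n$, and from Definition \ref{def:state_complex} together with Corollary \ref{cor:cluster_boundary} that its simplices are precisely the sets $V(\mathfrak{c})$ and that they do form an abstract simplicial complex. So I want a bijection $\phi$ from $V(\St_n(\Sp^1))$ onto the vertex set of $K_\Delta$ under which $S$ is a simplex of $\St_n(\Sp^1)$ if and only if $\phi(S)$ is a simplex of $K_\Delta$.

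First I would take $\phi$ to be the restriction of $D_n$ and check injectivity: if $\mathbf{v}\ne\mathbf{w}$ are vertices, then since $v_1=w_1=1$ there is an index $2\le i\le n$ with $v_i\ne w_i$, and as $v_i,w_i\in\{+1,-1\}$ the distances $d(v_1,v_i)$ and $d(w_1,w_i)$ are $0$ and $\pi$ in some order; hence the $(1,i)$ entries of $D_n(\mathbf{v})$ and $D_n(\mathbf{w})$ differ. Next I would identify the simplices of $K_\Delta$ with the images $D_n(V(\mathfrak{c}))$. By the definition $K_\Delta=\{V(A):A\in\Delta\}$ with $A=\conv(D_n\circ V(\mathfrak{c}))$, the point is that $D_n\circ V(\mathfrak{c})$ is the genuine vertex set of this convex hull, i.e.\ the $m$ points $D_n(\mathbf{v}^{(k)}(\mathfrak{c}))$ are affinely independent. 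This follows by combining Corollary \ref{cor:Phi_c_injective} and Proposition \ref{prop:cluster_convex_hull}: the latter exhibits $D_n\circ\Phi_{\mathfrak{c}}$ as the affine map $\mathbf{t}\mapsto\sum_k t_k D_n(\mathbf{v}^{(k)}(\mathfrak{c}))$ on $\Delta_{m-1}$, the former says it is injective there, and an affine map that is injective on a simplex with nonempty relative interior cannot send the vertices of that simplex to an affinely dependent set — otherwise one perturbs an interior point along an affine dependence and obtains two points with equal image. Hence $\conv(D_n\circ V(\mathfrak{c}))$ is an $(m-1)$-simplex with vertex set $D_n(V(\mathfrak{c}))$, the simplices of $K_\Delta$ are exactly the sets $D_n(V(\mathfrak{c}))$, and the vertex set of $K_\Delta$ is $\bigcup_{\mathfrak{c}}D_n(V(\mathfrak{c}))=D_n(V(\St_n(\Sp^1)))=\phi\big(V(\St_n(\Sp^1))\big)$.

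To finish, $\phi$ is an injection of $V(\St_n(\Sp^1))$ onto the vertex set of $K_\Delta$ that carries each simplex $V(\mathfrak{c})$ of $\St_n(\Sp^1)$ onto the simplex $D_n(V(\mathfrak{c}))$ of $K_\Delta$, and this correspondence is a bijection on the full collections of simplices; since both $\phi$ and $\phi^{-1}$ send simplices to simplices, $\phi$ is an isomorphism of abstract simplicial complexes (so in particular $K_\Delta$, a priori only a family of finite vertex sets, is genuinely such a complex). I do not anticipate a real obstacle: everything but one step is bookkeeping with the definitions already in place, and the one substantive step — deducing affine independence of $D_n(V(\mathfrak{c}))$ from injectivity of $D_n\circ\Phi_{\mathfrak{c}}$ — is itself short.
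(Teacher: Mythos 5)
Your proposal is correct and follows essentially the same route as the paper: injectivity of $D_n$ on the vertex set via the $(1,i)$ entries, together with the identification of the simplices of $K_\Delta$ with the sets $D_n(V(\mathfrak{c}))$. The one step you argue by hand -- that affine independence of $D_n \circ V(\mathfrak{c})$ follows from injectivity of the affine map $D_n \circ \Phi_{\mathfrak{c}}$ -- is exactly the paper's Corollary \ref{cor:cluster_convex_hull}, proved from the same two ingredients (Proposition \ref{prop:cluster_convex_hull} and Corollary \ref{cor:Phi_c_injective}).
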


\indent Of course, the bulk of the work lies on the verification that $\Delta$ is a geometric simplicial complex. Let's start by proving that $\conv(D_n \circ V(\mathfrak{c}))$ is a simplex.

\begin{corollary}
	\label{cor:cluster_convex_hull}
	For any $(m,n)$-cluster structure $\mathfrak{c}$, $D_n \circ V(\mathfrak{c}) \subset \R^{n \times n}$ is affinely independent. Hence, $\conv(D_n \circ V(\mathfrak{c}))$ is an $(m-1)$-simplex.
\end{corollary}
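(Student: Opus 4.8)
The plan is to read off affine independence directly from Proposition~\ref{prop:cluster_convex_hull} and the injectivity established in Corollary~\ref{cor:Phi_c_injective}. Write $\mathbf{v}^{(k)} := \mathbf{v}^{(k)}(\mathfrak{c})$ for $1 \le k \le m$, let $H := \{\mathbf{t} \in \R^m : t_1 + \cdots + t_m = 1\}$ be the affine hyperplane spanned by $\Delta_{m-1}$, and consider the affine map
\[
	\Lambda : H \to \R^{n\times n}, \qquad \Lambda(\mathbf{t}) := t_1 D_n(\mathbf{v}^{(1)}) + \cdots + t_m D_n(\mathbf{v}^{(m)}).
\]
By Proposition~\ref{prop:cluster_convex_hull}, the restriction $\Lambda|_{\Delta_{m-1}}$ equals $D_n \circ \Phi_{\mathfrak{c}}$, which is injective by Corollary~\ref{cor:Phi_c_injective}. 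The goal is to promote this to injectivity of $\Lambda$ on all of $H$, since that is precisely the statement that $D_n(\mathbf{v}^{(1)}), \dots, D_n(\mathbf{v}^{(m)})$ are affinely independent.

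First I would argue that an affine map which is injective on a full-dimensional subset of an affine subspace is injective on the whole subspace. Concretely, let $\lambda$ be the linear part of $\Lambda$, acting on the direction space $\{\mathbf{u} \in \R^m : u_1 + \cdots + u_m = 0\}$. If $\lambda(\mathbf{u}) = 0$ for some $\mathbf{u} \neq 0$, then $\Lambda$ is constant along every line $\{\mathbf{t}_0 + s\mathbf{u} : s \in \R\}$ in $H$; but $\Delta_{m-1}$ is $(m-1)$-dimensional, hence contains a nondegenerate segment in the direction $\mathbf{u}$, contradicting the injectivity of $\Lambda|_{\Delta_{m-1}}$. So $\lambda$ — and therefore $\Lambda$ — is injective. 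Now, if $a_1, \dots, a_m \in \R$ satisfy $a_1 D_n(\mathbf{v}^{(1)}) + \cdots + a_m D_n(\mathbf{v}^{(m)}) = 0$ and $a_1 + \cdots + a_m = 0$, then taking the barycenter $\mathbf{b} := (1/m, \dots, 1/m) \in H$ we have $\mathbf{b} + \mathbf{a} \in H$ and $\Lambda(\mathbf{b} + \mathbf{a}) = \Lambda(\mathbf{b})$, so injectivity of $\Lambda$ forces $\mathbf{a} = 0$. Hence $D_n \circ V(\mathfrak{c})$ is affinely independent; in particular the points $D_n(\mathbf{v}^{(k)})$ are pairwise distinct (a repetition would yield a nontrivial affine dependence), so $D_n \circ V(\mathfrak{c})$ consists of exactly $m$ points and $\conv(D_n \circ V(\mathfrak{c}))$ is, by definition, an $(m-1)$-simplex.

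There is essentially no obstacle here: the content is entirely contained in Propositions and Corollaries already proved, and the only extra ingredient is the elementary linear-algebra fact recorded above. If one prefers to avoid passing to $H$, one can instead argue directly inside $\Delta_{m-1}$: given a putative nontrivial $\mathbf{a}$ as above, the points $\mathbf{b} \pm \varepsilon \mathbf{a}$ lie in the relative interior of $\Delta_{m-1}$ for small $\varepsilon > 0$ and are mapped by $D_n \circ \Phi_{\mathfrak{c}}$ to the same matrix, contradicting Corollary~\ref{cor:Phi_c_injective}.
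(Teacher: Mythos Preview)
Your proof is correct and follows essentially the same route as the paper: both reduce the claim to the injectivity of $D_n \circ \Phi_{\mathfrak{c}}$ on $\Delta_{m-1}$ (Proposition~\ref{prop:cluster_convex_hull} plus Corollary~\ref{cor:Phi_c_injective}), and then invoke the elementary fact that the barycentric-coordinate map $\mathbf{t} \mapsto \sum t_k D_n(\mathbf{v}^{(k)})$ is injective on $\Delta_{m-1}$ if and only if the targets are affinely independent. The paper simply asserts that equivalence, whereas you spell it out via the $\mathbf{b} \pm \varepsilon \mathbf{a}$ argument (or its affine-hyperplane version); this is more detail, not a different idea.
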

\begin{proof}
	Notice that a set $V := \{\mathbf{v}_1, \dots, \mathbf{v}_m \} \subset \R^d$ is affinely independent if and only if the map $F:\Delta_{m-1} \to \conv(V)$ given by $F(\mathbf{t}) := t_1 \mathbf{v}_1 + \cdots t_m \mathbf{v}_m$ is injective. Proposition \ref{prop:cluster_convex_hull} shows that the composition $D_n \circ \Phi_{\mathfrak{c}}:\Delta_{m-1} \to \conv(D_n \circ V(\mathfrak{c}))$ has this form, and Corollary \ref{cor:Phi_c_injective} implies that it is injective. Hence, $D_n \circ V(\mathfrak{c})$ is affinely independent.
\end{proof}

The Proposition above verifies that $\Delta$ is a collection of simplices. Faces of simplices are simplices because we already know that $\St_n(\Sp^1)$ is an abstract simplicial complex. To verify that simplices intersect on their faces, we'll prove that there is a unique minimal simplex in $\Delta$ that contains any given matrix $M \in \Kn_n(\Sp^1)$.

\begin{prop}
	\label{prop:clusters_same_vertices}
	Let $\mathbf{x} \in \{1\} \times \T^{n-1}$. Let $\mathfrak{c}$ be an $(m,n)$-cluster structure such that $V(\mathfrak{c}) = V(\mathfrak{c}_{\mathbf{x}})$. Then $\mathfrak{c} = \mathfrak{c}_{\mathbf{x}}$ or $\mathfrak{c} = \mathfrak{c}_{\rho(\mathbf{x})}$.
\end{prop}
\begin{proof}
	Observe that a cluster structure has $m$ degrees of freedom if and only if it has $m$ vertices. Since $\mathfrak{c}$ has $m$ degrees of freedom, $V(\mathfrak{c}_{\mathbf{x}}) = V(\mathfrak{c})$ has $m$ points, so $\mathfrak{c}_{\mathbf{x}}$ has $m$ degrees of freedom. By Proposition \ref{prop:cluster_structure_x}, there exists an interior point $\mathbf{t}$ of $\Delta_{m-1}$ such that $\mathbf{x} = \Phi_{\mathfrak{c}_{\mathbf{x}}}(\mathbf{t})$. Then $D_n(\mathbf{x})$ is an interior point of $\conv(D_n \circ V(\mathfrak{c}_{\mathbf{x}})) = \conv(D_n \circ V(\mathfrak{c}))$, so by Proposition \ref{prop:cluster_convex_hull}, there exists an interior point $\mathbf{s} \in \Delta_{m-1}$ such that $D_n(\mathbf{x}) = D_n(\Phi_{\mathfrak{c}}(\mathbf{s}))$. Paraphrasing Proposition \ref{prop:quotient_of_torus} gives that $\Phi_{\mathfrak{c}}(\mathbf{s})$ is either $\mathbf{x}$ or $\rho(\mathbf{x})$. In the first case, Lemma \ref{lemma:cluster_boundary} applies with $I = \{1, \dots, m\}$ ($\mathbf{s}$ is an interior point of $\Delta_{m-1}$ means that $s_i > 0$ for all $i$), so $\mathfrak{c}_{\mathbf{x}} = \mathfrak{c}_I = \mathfrak{c}$. The second case yields $\mathfrak{c}_{\rho(\mathbf{x})} = \mathfrak{c}$ analogously.
\end{proof}

\begin{prop}
	\label{prop:minimal_simplex}
	Let $M \in \Kn_n(\Sp^1)$ and $\mathbf{x} \in \{1\} \times \T^{n-1}$ such that $M = D_n(\mathbf{x})$. If $M \in \conv(D_n \circ V(\mathfrak{c}))$ for some cluster structure $\mathfrak{c}$, then $V(\mathfrak{c}_{\mathbf{x}}) \subset V(\mathfrak{c})$. As a consequence, $\conv(D_n \circ V(\mathfrak{c}_{\mathbf{x}}))$ is the unique minimal simplex of $\Delta$ that contains $M$.
\end{prop}
\begin{proof}
	Paraphrasing Proposition \ref{prop:quotient_of_torus} shows that any $\mathbf{y} \in \{1\} \times \T^{n-1}$ such that $D_n(\mathbf{y}) = M$ has to be either $\mathbf{x}$ or $\rho(\mathbf{x})$. Corollaries \ref{cor:vertices_c_transpose} and \ref{cor:c_transpose} imply that $V(\mathfrak{c}_{\rho(\mathbf{x})}) = V(\rho \cdot \mathfrak{c}_{\mathbf{x}}) = V(\mathfrak{c}_\mathbf{x})$, so the choice between $\mathbf{x}$ and $\rho(\mathbf{x})$ is immaterial. Moreover, if $\mathfrak{c}$ is an arbitrary $(m,n)$-cluster structure such that $M \in \conv(D_n \circ V(\mathfrak{c}))$, Proposition \ref{prop:cluster_convex_hull} gives $\mathbf{t} \in \Delta_{m-1}$ such that $M = D_n \circ \Phi_{\mathfrak{c}}(\mathbf{t})$. We assume $\mathbf{x} = \Phi_{\mathfrak{c}}(\mathbf{t})$ without loss of generality because, if $\Phi_{\mathfrak{c}}(\mathbf{t}) = \rho(\mathbf{x})$, Proposition \ref{prop:c_transpose} and Corollary \ref{cor:c_transpose} allow us to replace $\mathfrak{c}$ with $\rho \cdot \mathfrak{c}$. Then Lemma \ref{lemma:cluster_boundary} implies that $\mathfrak{c}_{\mathbf{x}}$ has the form $\mathfrak{c}_I$ for some $I \subset \{1, \dots, m\}$, so Corollary \ref{cor:cluster_boundary} yields $V(\mathfrak{c}_{\mathbf{x}}) = V(\mathfrak{c}_I) \subset V(\mathfrak{c}) = V(\rho \cdot \mathfrak{c})$. Hence, $\conv(D_n \circ V(\mathfrak{c}_{\mathbf{x}}))$ is the unique minimal simplex that contains $M$.
\end{proof}

\begin{prop}
	\label{prop:intersection_of_simplices}
	Let $\mathfrak{c}$ be an $(m,n)$-cluster structure and $\mathfrak{c}'$, an $(m',n)$-cluster structure. Then $\conv(D_n \circ V(\mathfrak{c})) \cap \conv(D_n \circ V(\mathfrak{c}')) = \conv(D_n \circ V(\mathfrak{c}) \cap D_n \circ V(\mathfrak{c}'))$.
\end{prop}
\begin{proof}
	Since $V(\mathfrak{c}) \cap V(\mathfrak{c}')$ is contained in both $V(\mathfrak{c})$ and $V(\mathfrak{c}')$, the inclusion
	\begin{equation*}
		\conv(D_n \circ V(\mathfrak{c}) \cap D_n \circ V(\mathfrak{c}')) \subset \conv(D_n \circ V(\mathfrak{c})) \cap \conv(D_n \circ V(\mathfrak{c}'))
	\end{equation*}
	is immediate. Conversely, let $M \in \conv(D_n \circ V(\mathfrak{c})) \cap \conv(D_n \circ V(\mathfrak{c}'))$, and let $\mathbf{x} \in \{1\} \times \T^{n-1}$ such that $M = D_n(\mathbf{x})$. By Proposition \ref{prop:minimal_simplex}, $V(\mathfrak{c}_{\mathbf{x}}) \subset V(\mathfrak{c}) \cap V(\mathfrak{c}')$, so $M \in \conv(D_n \circ V(\mathfrak{c}_{\mathbf{x}})) \subset \conv(D_n \circ V(\mathfrak{c}) \cap D_n \circ V(\mathfrak{c}'))$.
\end{proof}

\begin{theorem}
	\label{thm:geometric_realization}
        The collection
        \begin{equation*}
    	\Delta = \{ \conv(D_n \circ V(\mathfrak{c})) \mid \mathfrak{c} \text{ is an $(m,n)$-cluster structure} \}
        \end{equation*}
	is a geometric simplicial complex such that $\| \Delta \| = \Kn_n(\Sp^1)$.
\end{theorem}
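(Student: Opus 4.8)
The plan is to verify the three defining properties of a geometric simplicial complex for $\Delta$ and then check the set equality $\|\Delta\| = \Kn_n(\Sp^1)$, drawing almost entirely on the results already assembled in this subsection. First, that every member of $\Delta$ is a simplex is exactly Corollary~\ref{cor:cluster_convex_hull}: for each $(m,n)$-cluster structure $\mathfrak{c}$, the set $D_n \circ V(\mathfrak{c})$ is affinely independent, and since $D_n$ is injective on $V(\St_n(\Sp^1))$ (the observation preceding Lemma~\ref{lemma:St_n_Delta_isomorphic}) it has $m$ elements, so $\conv(D_n \circ V(\mathfrak{c}))$ is an $(m-1)$-simplex with vertex set $D_n \circ V(\mathfrak{c})$. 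Second, for the face condition, a face of $\conv(D_n \circ V(\mathfrak{c}))$ has the form $\conv(D_n \circ W)$ for a nonempty $W \subseteq V(\mathfrak{c})$; by Corollary~\ref{cor:cluster_boundary} any such $W$ equals $V(\mathfrak{c}_I)$ for a suitable index set $I \subseteq \{1, \dots, m\}$, so $\conv(D_n \circ W) = \conv(D_n \circ V(\mathfrak{c}_I)) \in \Delta$.

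Third, the intersection property is where the only real work sits, and it has essentially been done already. For cluster structures $\mathfrak{c}$ and $\mathfrak{c}'$, Proposition~\ref{prop:intersection_of_simplices} gives
\[
	\conv(D_n \circ V(\mathfrak{c})) \cap \conv(D_n \circ V(\mathfrak{c}')) = \conv\big(D_n \circ V(\mathfrak{c}) \cap D_n \circ V(\mathfrak{c}')\big).
\]
Using injectivity of $D_n$ on vertices, $D_n \circ V(\mathfrak{c}) \cap D_n \circ V(\mathfrak{c}') = D_n \circ (V(\mathfrak{c}) \cap V(\mathfrak{c}'))$; since $V(\mathfrak{c}) \cap V(\mathfrak{c}')$ is a subset of $V(\mathfrak{c})$, it is some $V(\mathfrak{c}_I)$ by Corollary~\ref{cor:cluster_boundary}, so the right-hand side is a face of $\conv(D_n \circ V(\mathfrak{c}))$, and by the symmetric argument also a face of $\conv(D_n \circ V(\mathfrak{c}'))$. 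The genuine obstacle --- ruling out any messier overlap of two such simplices --- is exactly the content packaged into Propositions~\ref{prop:minimal_simplex} and~\ref{prop:intersection_of_simplices}, which rely on the uniqueness of the minimal carrying simplex and on the $\langle\rho\rangle$-quotient description of Proposition~\ref{prop:quotient_of_torus}; at this point I only need to invoke them.

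Finally, for $\|\Delta\| = \Kn_n(\Sp^1)$: the inclusion $\|\Delta\| \subseteq \Kn_n(\Sp^1)$ is immediate because Proposition~\ref{prop:cluster_convex_hull} identifies $\conv(D_n \circ V(\mathfrak{c}))$ with $\Im(D_n \circ \Phi_{\mathfrak{c}}) \subseteq \Im(D_n) = \Kn_n(\Sp^1)$. For the reverse inclusion, given $M \in \Kn_n(\Sp^1)$, Proposition~\ref{prop:quotient_of_torus} lets me choose $\mathbf{x} \in \{1\} \times \T^{n-1}$ with $M = D_n(\mathbf{x})$; setting $\mathfrak{c} := \mathfrak{c}_{\mathbf{x}}$ and applying Proposition~\ref{prop:cluster_structure_x} yields $\mathbf{t} \in \Delta_{m-1}$ with $\mathbf{x} = \Phi_{\mathfrak{c}_{\mathbf{x}}}(\mathbf{t})$, hence $M = D_n(\Phi_{\mathfrak{c}_{\mathbf{x}}}(\mathbf{t})) \in \conv(D_n \circ V(\mathfrak{c}_{\mathbf{x}})) \subseteq \|\Delta\|$. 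Combined with Lemma~\ref{lemma:St_n_Delta_isomorphic}, this also gives the homeomorphism $\Kn_n(\Sp^1) \cong |\St_n(\Sp^1)|$ needed for Theorem~\ref{thm:geometric_realization_intro}.
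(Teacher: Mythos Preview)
Your proof is correct and follows essentially the same route as the paper: verify the three axioms of a geometric simplicial complex via Corollary~\ref{cor:cluster_convex_hull}, Corollary~\ref{cor:cluster_boundary}, and Proposition~\ref{prop:intersection_of_simplices}, then establish $\|\Delta\| = \Kn_n(\Sp^1)$ by exhibiting, for each $M$, a simplex of $\Delta$ containing it. The only cosmetic difference is that for the inclusion $\Kn_n(\Sp^1) \subseteq \|\Delta\|$ the paper cites Proposition~\ref{prop:minimal_simplex} directly, whereas you construct the containing simplex more explicitly from Propositions~\ref{prop:quotient_of_torus} and~\ref{prop:cluster_structure_x}; both arguments are equivalent.
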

\begin{proof}
	By Corollary \ref{cor:cluster_convex_hull}, $\Delta$ is a collection of simplices. Given an $(m,n)$-cluster structure $\mathfrak{c}$, any subset of $V(\mathfrak{c})$ is the vertex set of some cluster structure by Corollary \ref{cor:cluster_boundary}, so every face of the simplex $\conv(D_n \circ V(\mathfrak{c}))$ is also a simplex of $\Delta$. Finally, Proposition \ref{prop:intersection_of_simplices} implies that the intersection of any pair of simplices of $\Delta$ is again a simplex. These three conditions verify that $\Delta$ is a geometric simplicial complex. To show that $\| \Delta \| = \Kn_n(\Sp^1)$, notice that the simplices of $\Delta$ are subsets of $\Kn_n(\Sp^1)$ by definition, so $\| \Delta \| \subset \Kn_n(\Sp^1)$. Conversely, for any $M \in \Kn_n(\Sp^1)$, there exists a unique minimal simplex of $\Delta$ that contains $M$ by Proposition \ref{prop:minimal_simplex}, so $\Kn_n(\Sp^1) \subset \| \Delta \|$.
\end{proof}

Thus, we obtain the main result of this section.
\GeometricRealizationStn*
\begin{proof}
	By Lemma \ref{lemma:St_n_Delta_isomorphic} and Theorem \ref{thm:geometric_realization}, $|\St_n(\Sp^1)| \cong \|\Delta\| = \Kn_n(\Sp^1)$.
\end{proof}

\subsection{Face numbers of $\St_n(\Sp^1)$ and Euler characteristic.}
\label{sec:combinatorics_St_n}

\indent Corollary \ref{cor:vertices_c_transpose} shows that for every $(m,n)$-cluster structure $\mathfrak{c}$, $\rho \cdot \mathfrak{c}$ has the same vertex set as $\mathfrak{c}$, while Proposition \ref{prop:clusters_same_vertices} shows that no other cluster structure $\mathfrak{c}'$ has the same vertices as $\mathfrak{c}$. We use this information to count the number $f(n,k)$ of $k$-simplices of $\St_n(\Sp^1)$. Let $S(n,k)$ be the Stirling numbers of the second kind. These are the number of ways that $\{1, \dots, n\}$ can be partitioned into $k$ unordered sets. The $S(n,k)$ are defined for $0 \leq k \leq n$ and satisfy the recurrence relation
\begin{equation*}
	S(n+1,k) = kS(n,k) + S(n,k-1) \text{ for } 0 < k < n,
\end{equation*}
with initial conditions $S(n,n)=1$ for $n \geq 0$ and $S(n,0) = S(0,n) = 0$ for $n>0$.

\begin{prop}
	\label{prop:Euler_char}
	$f(1,0) = 1$ and if $n \geq 2$, $f(n,0) = 2^{n-1}$ and $f(n,m) = 2^{n-2} \cdot m! \cdot S(n,m+1)$ for $1 \leq m \leq n-1$.
\end{prop}
\begin{proof}
	Recall that the $m$-simplices of $\St_n(\Sp^1)$ are determined by $(m+1,n)$-cluster structures (see Definitions \ref{def:cluster_structure_vertices} and \ref{def:state_complex}). If $n=1$, there exists a unique $(1,1)$-cluster structure $\mathfrak{c}:\{1\} \to \{\pm 1\}$ because $\mathfrak{c}(1) = +1$ by Definition \ref{def:cluster_structure}. Hence, $f(1,0)=1$. If $n \geq 2$, Remarks \ref{rmk:vertices_of_c} and \ref{rmk:vertices_of_St_n} imply that the vertex set of $\St_n(\Sp^1)$ is the set of points $\mathbf{v} \in \T^n$ such that $v_1 = +1$ and $v_i = \pm 1$ for $2 \leq i \leq n$. There are $2^{n-1}$ such points, so $f(n,0) = 2^{n-1}$.\\
	\indent Suppose $n \geq 2$ and $1 \leq m \leq n-1$. Let's count the number $C(n,m+1)$ of $(m+1,n)$-cluster structures. Observe that a surjective function $|\mathfrak{c}|:\{1, \dots, n\} \to \{1, \dots, m+1\}$ assigns each $1 \leq i \leq n$ into one of $(m+1)$ ordered clusters. Normally, there would be $(m+1)! \cdot S(n,m+1)$ possibilities for $|\mathfrak{c}|$, but Definition \ref{def:cluster_structure} requires $\mathfrak{c}(1)=1$. In other words, the first cluster is fixed, so we have $m! \cdot S(n,m+1)$ possible choices for $|\mathfrak{c}|$. To obtain $\mathfrak{c}$ from $|\mathfrak{c}|$, we have to choose signs for $\mathfrak{c}(2), \dots, \mathfrak{c}(n)$, so $C(n,m+1) = 2^{n-1} \cdot m! \cdot S(n,m+1)$. Since the simplices $V(\mathfrak{c}), V(\mathfrak{c}') \in \St_n(\Sp^1)$ are equal if and only if $\mathfrak{c}'=\mathfrak{c}$ or $\mathfrak{c}'=\rho \cdot \mathfrak{c}$, we get $f(n,m) = C(n,m+1)/2 = 2^{n-2} \cdot m! \cdot S(n,m+1)$.
\end{proof}

\begin{remark}
	Since $S(n,2)$ equals $2^{n-1}-1$ for $n \geq 2$, the number of edges in $\St_n(\Sp^1)$ is $f(n,1) = 2^{n-2} S(n,2) = \frac{1}{2} 2^{n-1} (2^{n-1}-1) = \binom{2^{n-1}}{2}$. As a consequence, $\St_n(\Sp^1)$ contains an edge between every pair of vertices when $n \geq 2$.
\end{remark}

As an application of Proposition \ref{prop:Euler_char}, we calculate the Euler characteristic of $\Kn_n(\Sp^1)$ for any $n$.
\begin{prop}
	$\chi(\St_1(\Sp^1)) = 1$ and, for $n \geq 2$, $\chi(\St_n(\Sp^1)) = 2^{n-2}$.
\end{prop}
\begin{proof}
	
	Observe that
	\begin{align*}
		\chi(\St_n(\Sp^1))
		&= \sum_{m=0}^{n-1} (-1)^m f(n,m)
		= 2^{n-1} + \sum_{m=1}^{n-1} (-1)^m 2^{n-2} m! \cdot S(n,m+1) \\
		&= 2^{n-2} + 2^{n-2} \sum_{m=0}^{n-1} (-1)^m m! \cdot S(n,m+1).
	\end{align*}
	
	Using the recurrence relation of $S(n,m)$ gives
	\begin{align*}
		\sum_{m=0}^{n-1} & (-1)^m m! \cdot S(n,m+1)
		= \sum_{m=0}^{n-2} (-1)^m m! \big[ (m+1) \cdot S(n-1,m+1) + S(n-1,m) \big] \\
		&\phantom{(-1)^m m! \cdot S(n,m+1) = \sum_{m=0}^{n-2}} + (-1)^{n-1} (n-1)! \cdot S(n,n)\\
		&=  \sum_{m=0}^{n-2} (-1)^m (m+1)! \cdot S(n-1,m+1) + \sum_{m=0}^{n-1} (-1)^m m! \cdot S(n-1,m) \\
		&=  \sum_{m=0}^{n-2} (-1)^m (m+1)! \cdot S(n-1,m+1) + \sum_{m=1}^{n-1} (-1)^m m! \cdot S(n-1,m) + 0! \cdot S(n-1,0) \\
		&=  \sum_{m=0}^{n-2} (-1)^m (m+1)! \cdot S(n-1,m+1) + \sum_{m=0}^{n-2} (-1)^{m+1} (m+1)! \cdot S(n-1,m+1) = 0.
	\end{align*}
	Thus, $\chi (\St_n(\Sp^1)) = 2^{n-2}$.
\end{proof}

\begin{remark}
	The Euler characteristic of $\Kn_n(\Sp^1)$ can be computed via Theorem \ref{thm:main} and coincides with that of $\St_n(\Sp^1)$. This is to be expected in light of the homeomorphism $|\St_n(\Sp^1)| \cong \Kn_n(\Sp^1)$ given in Theorem \ref{thm:geometric_realization}.
\end{remark} 
\section{Connection with elliptopes}
\label{sec:elliptopes}
Let $\mathcal{S}^n$ be the set of $n$-by-$n$ symmetric matrices. Define the \emph{$n$-th elliptope} as:
\begin{equation*}
	\mathcal{E}_n := \{M \in \mathcal{S}^n : \ M \text{ is positive semidefinite and } M_{ii}=1 \text{ for } i=1,\dots,n \}.
\end{equation*}
Many problems in convex optimization can be seen as the optimization of an objective function over the set $\mathcal{E}_n$. One case where these problems are interesting is as a semidefinite relaxation of possibly NP-hard problems such as binary quadratic optimizations. In other words, it is possible to obtain approximate solutions to a binary quadratic optimization using a semidefinite program in a much reasonable time. See \cite{semidefinite-optimization-ag} for a more complete treatment on optimization, including elliptopes. In particular, Section 2.2.2 contains more thorough examples of semidefinite relaxations.\\
\indent There is a connection between $\mathcal{E}_n$ and the curvature sets $\Kn_n(\Sp^m)$. Given two points $x, y \in \Sp^{m}$ thought of as column vectors in $\R^{m+1}$, the geodesic distance in $\Sp^m$ is given by $d_m(x, y) := \arccos(x^\intercal \cdot y)$. In fact, for $\mathbf{x} \in (\Sp^m)^n$ (thought of as an $n$-by-$(m+1)$ matrix with columns $x_i \in \Sp^{m} \subset \R^{m+1}$), then $\left( D_n(\mathbf{x}) \right)_{ij} = \arccos(x_i^\intercal \cdot x_j)$. Then the matrix obtained by applying $\cos(\cdot)$ entry-wisely to $D_n(\mathbf{x})$ is the Gram matrix $\mathbf{x}^\intercal \cdot \mathbf{x}$, which is positive semi-definite. Hence, $\cos(D_n(\mathbf{x})) \in \mathcal{E}_n$. Conversely, every $M \in \mathcal{E}_n$ is a Gram matrix of a set of points in $\R^{m+1}$ where $m = \operatorname{rank}(M)$ and $0 \leq m \leq n-1$. Since $M_{ii}=1$, these points actually belong to $\Sphere{m}$. We then have the following result.
\begin{prop}
	\label{prop:elliptope_filtration}
	The chain of inclusions $\Sphere{1} \subset \Sphere{2} \subset \cdots \subset  \Sphere{n-1}$ induces a filtration
	\begin{equation*}
		\cos(\Kn_n(\Sphere{1})) \subset \cdots \subset \cos(\Kn_n(\Sphere{n-1})) = \mathcal{E}_n
	\end{equation*}
\end{prop}

This proposition becomes suggestive in light of the computation of the homology groups of $\Kn_n(\Sp^1)$ (Theorem \ref{thm:main}): it implies that the convex set $\mathcal{E}_n$ has a subset $\cos(\Kn_n(\Sp^1)) \cong \Kn_n(\Sp^1)$ which not only has non-trivial homology but also exhibits torsion. Additionally, M\'{e}moli showed that $\Kn_3(\Sp^2)$ is the convex hull of $\Kn_3(\Sp^1)$ \cite{mem12}. In the case $n=3$, Theorem \ref{prop:elliptope_filtration} means that $\mathcal{E}_3$ is the convex hull of $\cos(\Kn_3(\Sp^1))$. In fact, we can show a similar statement. Let $\Sp^{n}_E$ be the $n$-sphere in $\R^{n+1}$ equipped with the Euclidean metric, and denote with $D_n^{m}: (\Sp^m_E)^n \to \Kn_n(\Sp^m_E)$ the distance matrix map (see Section \ref{sec:intro}).
\begin{prop}
	\label{prop:Kn_convex_hull}
	$\Kn_{n+1}(\Sp_E^{n})$ is the cone joining $\Kn_{n+1}(\Sp_E^{n-1})$ to the origin in $\R^{(n+1) \times (n+1)}$.
\end{prop}
\begin{proof}
	Let $\mathbf{x} \in (\Sp^{n}_E)^{n+1}$. Any $n+1$ points in $\R^{n+1}$ generate an $m$-hyperplane, with $m < n+1$, that intersects $\Sp^{n}_E$ in a rescaled sphere $\lambda \cdot \Sp^{m-1}_E$ with $0 < \lambda \leq 1$. Then, $D_{n+1}^{n}(\mathbf{x}) \in \lambda \cdot \Kn_{n+1}(\Sp^{m-1}_E) \subset \lambda \cdot \Kn_{n+1}(\Sp^{n-1}_E)$. Thus, $\Kn_{n+1}(\Sp^{n}_E) = \bigcup_{0 < \lambda \leq 1} \lambda \cdot \Kn_{n+1}(\Sp^{n-1}_E)$.
\end{proof}

Let $d_{m,E}$ be the Euclidean metric on $\Sp_E^m$, and observe that the function $f_E(d) := 2\arcsin(d/2)$ satisfies $d_{m,E} = f_E \circ d_m$ and $\Kn_n(\Sp_E^m) = f_E\big( \Kn_n(\Sp^m) \big)$. Propositions \ref{prop:elliptope_filtration} and \ref{prop:Kn_convex_hull} imply that $\mathcal{E}_n$ is the convex hull of $\cos\big(f_E^{-1}(\Kn_{n}(\Sp^{n-2})) \big)$. It seems  interesting to explore these and other consequences of the connection between elliptopes and curvature sets.

\bibliographystyle{alpha}

\newpage
\appendix
\section{Explicit computation of $\St_3(\Sp^1)$.}
\begin{example}
	\label{ex:state_complex}
	We now show an explicit computation of $\St_3(\Sp^1)$. To do that, we show all $(m,3)$-cluster structures with $1 \leq m \leq 3$ and show a typical element of $\conv(D_n \circ V(\mathfrak{c})) \subset \Kn_{3}(\Sp^1)$. We also include the Hasse diagram of $\St_3(\Sp^1)$, from where it can be seen that $|\St_3(\Sp^1)| \simeq \Sp^2$. For convenience, we denote the points $+1,-1 \in \Sp^1$ with their signs only, and a cluster structure $\mathfrak{c}$ as a vector $(\mathfrak{c}(1), \mathfrak{c}(2), \mathfrak{c}(3))$.
	
	\begin{figure}[h]
		\centering
		\begin{equation*}
			\begin{tikzcd}
				& f_1 = f_2		\arrow[dash]{ddl}	\arrow[dash]{dd}	\arrow[dash]{ddr}
				& f_3 = f_4		\arrow[dash]{ddll}	\arrow[dash]{ddr}	\arrow[dash]{ddrr}
				& f_5 = f_6		\arrow[dash]{ddll}	\arrow[dash]{dd}	\arrow[dash]{ddrr}
				& f_7 = f_8		\arrow[dash]{ddll}	\arrow[dash]{dd}	\arrow[dash]{ddr}
				\\ \\
				e_1 = e_2			\arrow[dash]{ddr}	\arrow[dash]{ddrr}
				& e_3 = e_4			\arrow[dash]{dd}	\arrow[dash]{ddrr}
				& e_5 = e_6			\arrow[dash]{dd}	\arrow[dash]{ddr}
				& e_7 = e_8			\arrow[dash]{ddll}	\arrow[dash]{ddr}
				& e_9 = e_{10}		\arrow[dash]{ddll}	\arrow[dash]{dd}
				& e_{11} = e_{12}	\arrow[dash]{ddll}	\arrow[dash]{ddl}
				\\ \\
				& v_1 \arrow[d, phantom, sloped, "="]
				& v_2 \arrow[d, phantom, sloped, "="]
				& v_3 \arrow[d, phantom, sloped, "="]
				& v_4 \arrow[d, phantom, sloped, "="]
				\\
				& (+--)
				& (++-)
				& (+-+)
				& (+++)
			\end{tikzcd}
		\end{equation*}
		\caption{Hasse diagram of $\St_3(\Sp^1)$.}
	\end{figure}

	\begin{longtable}{|c|c|c|c|c|}
		\caption{The 2-simplices of $\St_3(\Sp^1)$. No pair of points is equal or antipodal. Notice that, for all $k$, $\alpha \circ \pi(\mathbf{x})$ is equal in rows $f_{2k-1}$ and $f_{2k}$, while $\mathfrak{c}$ in row $2k$ equals $\mathfrak{c}^\intercal$ in row $2k-1$.}\\
		
		\hline
		Name & $\mathfrak{c}$ & $V(\mathfrak{c})$ & $\mathbf{x} \in \Im(\Phi_{\mathfrak{c}})$ & $D_3(\mathbf{x}) = \sum_{i=1}^{3} t_i D_3(\mathbf{v}^{(i)}(\mathfrak{c}))$\\
		\hline
		\endhead

		$f_1$
		&
		\begin{minipage}{0.15\linewidth}
			\centering
			$(+1,-2,+3)$
		\end{minipage}
		&
		\begin{minipage}{0.15\linewidth}
			\centering
			$(++-)$ \\
			$(+--)$ \\
			$(+-+)$
		\end{minipage}
		&
		\begin{minipage}{0.26\linewidth}
			\centering
			\drawCircle{0}{240}{105}{$x_1$}{$x_2$}{$x_3$}
		\end{minipage}
		&
		\begin{minipage}{0.30\linewidth}
			\centering
			$
			\begin{pmatrix}
				0		& t_2+t_3	& t_1+t_2 \\
				t_2+t_3	& 0 		& t_1+t_3 \\
				t_1+t_2	& t_1+t_3 	& 0
			\end{pmatrix}
			$
		\end{minipage}
		\\ \hline

		$f_2$
		&
		\begin{minipage}{0.15\linewidth}
			\centering
			$(+1,+3,-2)$
		\end{minipage}
		&
		\begin{minipage}{0.15\linewidth}
			\centering
			$(+-+)$ \\
			$(+--)$ \\
			$(++-)$
		\end{minipage}
		&
		\begin{minipage}{0.26\linewidth}
			\centering
			\drawCircle{0}{120}{255}{$x_1$}{$x_2$}{$x_3$}
		\end{minipage}
		&
		\begin{minipage}{0.30\linewidth}
			\centering
			$
			\begin{pmatrix}
				0		& t_1+t_2	& t_2+t_3 \\
				t_1+t_2	& 0 		& t_1+t_3 \\
				t_2+t_3	& t_1+t_3 	& 0
			\end{pmatrix}
			$
		\end{minipage}
		\\ \hline

		$f_3$
		&
		\begin{minipage}{0.15\linewidth}
			\centering
			$(+1,+2,+3)$
		\end{minipage}
		&
		\begin{minipage}{0.15\linewidth}
			\centering
			$(+--)$ \\
			$(++-)$ \\
			$(+++)$
		\end{minipage}
		&
		\begin{minipage}{0.26\linewidth}
			\centering
			\drawCircle{0}{65}{115}{$x_1$}{$x_2$}{$x_3$}
		\end{minipage}
		&
		\begin{minipage}{0.30\linewidth}
			\centering
			$
			\begin{pmatrix}
				0		& t_1	& t_1+t_2 \\
				t_1		& 0 	& t_2 \\
				t_1+t_2	& t_2 	& 0
			\end{pmatrix}
			$
		\end{minipage}
		\\ \hline

		$f_4$
		&
		\begin{minipage}{0.15\linewidth}
			\centering
			$(+1,-3,-2)$
		\end{minipage}
		&
		\begin{minipage}{0.15\linewidth}
			\centering
			$(+++)$ \\
			$(++-)$ \\
			$(+--)$
		\end{minipage}
		&
		\begin{minipage}{0.26\linewidth}
			\centering
			\drawCircle{0}{295}{245}{$x_1$}{$x_2$}{$x_3$}
		\end{minipage}
		& 
		\begin{minipage}{0.30\linewidth}
			\centering
			$
			\begin{pmatrix}
				0		& t_3	& t_2+t_3 \\
				t_3		& 0 	& t_2 \\
				t_2+t_3	& t_2	& 0
			\end{pmatrix}
			$
		\end{minipage}
		\\ \hline

		$f_5$
		&
		\begin{minipage}{0.15\linewidth}
			\centering
			$(+1,+3,+2)$
		\end{minipage}
		&
		\begin{minipage}{0.15\linewidth}
			\centering
			$(+--)$ \\
			$(+-+)$ \\
			$(+++)$
		\end{minipage}
		&
		\begin{minipage}{0.26\linewidth}
			\centering
			\drawCircle{0}{115}{65}{$x_1$}{$x_2$}{$x_3$}
		\end{minipage}
		&
		\begin{minipage}{0.30\linewidth}
		\centering
		$
		\begin{pmatrix}
			0		& t_1+t_2	& t_1 \\
			t_1+t_2	& 0 		& t_2 \\
			t_1		& t_2		& 0
		\end{pmatrix}
		$
		\end{minipage}
		\\ \hline

		$f_6$
		&
		\begin{minipage}{0.15\linewidth}
			\centering
			$(+1,-2,-3)$
		\end{minipage}
		&
		\begin{minipage}{0.15\linewidth}
			\centering
			$(+++)$ \\
			$(+-+)$ \\
			$(+--)$
		\end{minipage}
		&
		\begin{minipage}{0.26\linewidth}
			\centering
			\drawCircle{0}{245}{295}{$x_1$}{$x_2$}{$x_3$}
		\end{minipage}
		&
		\begin{minipage}{0.30\linewidth}
			\centering
			$
			\begin{pmatrix}
				0		& t_2+t_3	& t_3 \\
				t_2+t_3	& 0 		& t_2 \\
				t_3		& t_2		& 0
			\end{pmatrix}
			$
		\end{minipage}
		\\ \hline

		$f_7$
		&
		\begin{minipage}{0.15\linewidth}
			\centering
			$(+1,-3,+2)$
		\end{minipage}
		&
		\begin{minipage}{0.15\linewidth}
			\centering
			$(++-)$ \\
			$(+++)$ \\
			$(+-+)$
		\end{minipage}
		&
		\begin{minipage}{0.26\linewidth}
			\centering
			\drawCircle{0}{295}{90}{$x_1$}{$x_2$}{$x_3$}
		\end{minipage}
		&
		\begin{minipage}{0.30\linewidth}
		\centering
		$
		\begin{pmatrix}
			0	& t_3		& t_1 \\
			t_3	& 0 		& t_1+t_3 \\
			t_1	& t_1+t_3	& 0
		\end{pmatrix}
		$
		\end{minipage}
		\\ \hline

		$f_8$
		&
		\begin{minipage}{0.15\linewidth}
			\centering
			$(+1,+2,-3)$
		\end{minipage}
		&
		\begin{minipage}{0.15\linewidth}
			\centering
			$(+-+)$ \\
			$(+++)$ \\
			$(++-)$
		\end{minipage}
		&
		\begin{minipage}{0.26\linewidth}
			\centering
			\drawCircle{0}{65}{270}{$x_1$}{$x_2$}{$x_3$}
		\end{minipage}
		&
		\begin{minipage}{0.30\linewidth}
		\centering
		$
		\begin{pmatrix}
			0	& t_1		& t_3 \\
			t_1	& 0 		& t_1+t_3 \\
			t_3	& t_1+t_3	& 0
		\end{pmatrix}
		$
		\end{minipage}
		\\ \hline
	\end{longtable}

	\begin{longtable}{|c|c|c|c|c|}
		\caption{The 1-simplices of $\St_3(\Sp^1)$. Two points are equal or antipodal to each other, while the third is not.}\\
		
		\hline
		Name & $\mathfrak{c}$ & $V(\mathfrak{c})$ & $\mathbf{x} \in \Im(\Phi_{\mathfrak{c}})$ & $D_3(\mathbf{x}) = \sum_{i=1}^{2} t_i D_3(\mathbf{v}^{(i)}(\mathfrak{c}))$\\
		\hline
		\endhead

		$e_1$
		&
		\begin{minipage}{0.15\linewidth}
			\centering
			$(+1,+2,-1)$
		\end{minipage}
		&
		\begin{minipage}{0.15\linewidth}
			\centering
			$(+--)$ \\
			$(++-)$
		\end{minipage}
		&
		\begin{minipage}{0.26\linewidth}
			\centering
			\drawCircle{0}{65}{180}{$x_1$}{$x_2$}{$x_3=-x_1$}
		\end{minipage}
		&
		\begin{minipage}{0.20\linewidth}
			\centering
			$
			\begin{pmatrix}
				0	& t_1	& \pi \\
				t_1	& 0 	& t_2 \\
				\pi	& t_2	& 0
			\end{pmatrix}
			$
		\end{minipage}
		\\ \hline

		$e_2$
		&
		\begin{minipage}{0.15\linewidth}
			\centering
			$(+1,-2,-1)$
		\end{minipage}
		&
		\begin{minipage}{0.15\linewidth}
			\centering
			$(++-)$ \\
			$(+--)$
		\end{minipage}
		&
		\begin{minipage}{0.26\linewidth}
			\centering
			\drawCircle{0}{295}{180}{$x_1$}{$x_2$}{$x_3=-x_1$}
		\end{minipage}
		&
		\begin{minipage}{0.20\linewidth}
		\centering
		$
		\begin{pmatrix}
			0	& t_2	& \pi \\
			t_2	& 0 	& t_1 \\
			\pi	& t_1	& 0
		\end{pmatrix}
		$
		\end{minipage}
		\\ \hline

		$e_3$
		&
		\begin{minipage}{0.15\linewidth}
			\centering
			$(+1,-1,+2)$
		\end{minipage}
		&
		\begin{minipage}{0.15\linewidth}
			\centering
			$(+--)$ \\
			$(+-+)$
		\end{minipage}
		&
		\begin{minipage}{0.26\linewidth}
			\centering
			\drawCircle{0}{180}{115}{$x_1$}{$x_2=-x_1$}{$x_3$}
		\end{minipage}
		&
		\begin{minipage}{0.20\linewidth}
		\centering
		$
		\begin{pmatrix}
			0	& \pi	& t_1 \\
			\pi	& 0 	& t_2 \\
			t_1	& t_2	& 0
		\end{pmatrix}
		$
		\end{minipage}
		\\ \hline

		$e_4$
		&
		\begin{minipage}{0.15\linewidth}
			\centering
			$(+1,-1,-2)$
		\end{minipage}
		&
		\begin{minipage}{0.15\linewidth}
			\centering
			$(+-+)$ \\
			$(+--)$
		\end{minipage}
		&
		\begin{minipage}{0.26\linewidth}
			\centering
			\drawCircle{0}{180}{245}{$x_1$}{$x_2=-x_1$}{$x_3$}
		\end{minipage}
		&
		\begin{minipage}{0.20\linewidth}
			\centering
			$
			\begin{pmatrix}
				0	& \pi	& t_2 \\
				\pi	& 0 	& t_1 \\
				t_2	& t_1	& 0
			\end{pmatrix}
			$
		\end{minipage}
		\\ \hline

		$e_5$
		&
		\begin{minipage}{0.15\linewidth}
			\centering
			$(+1,-2,+2)$
		\end{minipage}
		&
		\begin{minipage}{0.15\linewidth}
			\centering
			$(++-)$ \\
			$(+-+)$
		\end{minipage}
		&
		\begin{minipage}{0.26\linewidth}
			\centering
			\drawCircle{0}{295}{115}{$x_1$}{$x_2$}{$x_3=-x_2$}
		\end{minipage}
		&
		\begin{minipage}{0.20\linewidth}
			\centering
			$
			\begin{pmatrix}
				0	& t_2	& t_1 \\
				t_2	& 0 	& \pi \\
				t_1	& \pi	& 0
			\end{pmatrix}
			$
		\end{minipage}
		\\ \hline

		$e_6$
		&
		\begin{minipage}{0.15\linewidth}
			\centering
			$(+1,+2,-2)$
		\end{minipage}
		&
		\begin{minipage}{0.15\linewidth}
			\centering
			$(+-+)$ \\
			$(++-)$
		\end{minipage}
		&
		\begin{minipage}{0.26\linewidth}
			\centering
			\drawCircle{0}{65}{245}{$x_1$}{$x_2$}{$x_3=-x_2$}
		\end{minipage}
		&
		\begin{minipage}{0.20\linewidth}
			\centering
			$
			\begin{pmatrix}
				0	& t_1	& t_2 \\
				t_1	& 0 	& \pi \\
				t_2	& \pi	& 0
			\end{pmatrix}
			$
		\end{minipage}
		\\ \hline

		$e_7$
		&
		\begin{minipage}{0.15\linewidth}
			\centering
			$(+1,+2,+2)$
		\end{minipage}
		&
		\begin{minipage}{0.15\linewidth}
			\centering
			$(+--)$ \\
			$(+++)$
		\end{minipage}
		&
		\begin{minipage}{0.26\linewidth}
			\centering
			\drawCircle{0}{125}{125}{$x_1$}{$x_2=x_3$}{}
		\end{minipage}
		&
		\begin{minipage}{0.20\linewidth}
			\centering
			$
			\begin{pmatrix}
				0	& t_1	& t_1 \\
				t_1	& 0 	& 0 \\
				t_1	& 0		& 0
			\end{pmatrix}
			$
		\end{minipage}
		\\ \hline

		$e_8$
		&
		\begin{minipage}{0.15\linewidth}
			\centering
			$(+1,-2,-2)$
		\end{minipage}
		&
		\begin{minipage}{0.15\linewidth}
			\centering
			$(+++)$ \\
			$(+--)$
		\end{minipage}
		&
		\begin{minipage}{0.26\linewidth}
			\centering
			\drawCircle{0}{235}{235}{$x_1$}{$x_2=x_3$}{}
		\end{minipage}
		&
		\begin{minipage}{0.20\linewidth}
			\centering
			$
			\begin{pmatrix}
				0	& t_2	& t_2 \\
				t_2	& 0 	& 0 \\
				t_2	& 0		& 0
			\end{pmatrix}
			$
		\end{minipage}
		\\ \hline

		$e_9$
		&
		\begin{minipage}{0.15\linewidth}
			\centering
			$(+1,+1,+2)$
		\end{minipage}
		&
		\begin{minipage}{0.15\linewidth}
			\centering
			$(++-)$ \\
			$(+++)$
		\end{minipage}
		&
		\begin{minipage}{0.26\linewidth}
			\centering
			\drawCircle{0}{0}{115}{$x_1 = x_2$}{}{$x_3$}
		\end{minipage}
		&
		\begin{minipage}{0.20\linewidth}
		\centering
		$
		\begin{pmatrix}
			0	& 0		& t_1 \\
			0	& 0 	& t_1 \\
			t_1	& t_1	& 0
		\end{pmatrix}
		$
		\end{minipage}
		\\ \hline

		$e_{10}$
		&
		\begin{minipage}{0.15\linewidth}
			\centering
			$(+1,+1,-2)$
		\end{minipage}
		&
		\begin{minipage}{0.15\linewidth}
			\centering
			$(+++)$ \\
			$(++-)$
		\end{minipage}
		&
		\begin{minipage}{0.26\linewidth}
			\centering
			\drawCircle{0}{0}{245}{$x_1 = x_2$}{}{$x_3$}
		\end{minipage}
		&
		\begin{minipage}{0.20\linewidth}
			\centering
			$
			\begin{pmatrix}
				0	& 0		& t_2 \\
				0	& 0 	& t_2 \\
				t_2	& t_2	& 0
			\end{pmatrix}
			$
		\end{minipage}
		\\ \hline

		$e_{11}$
		&
		\begin{minipage}{0.15\linewidth}
			\centering
			$(+1,+2,+1)$
		\end{minipage}
		&
		\begin{minipage}{0.15\linewidth}
			\centering
			$(+-+)$ \\
			$(+++)$
		\end{minipage}
		&
		\begin{minipage}{0.26\linewidth}
			\centering
			\drawCircle{0}{125}{0}{$x_1=x_3$}{$x_2$}{}
		\end{minipage}
		&
		\begin{minipage}{0.20\linewidth}
			\centering
			$
			\begin{pmatrix}
				0	& t_1	& 0 \\
				t_1	& 0 	& t_1 \\
				0	& t_1	& 0
			\end{pmatrix}
			$
		\end{minipage}
		\\ \hline

		$e_{12}$
		&
		\begin{minipage}{0.15\linewidth}
			\centering
			$(+1,-2,+1)$
		\end{minipage}
		&
		\begin{minipage}{0.15\linewidth}
			\centering
			$(+++)$ \\
			$(+-+)$
		\end{minipage}
		&
		\begin{minipage}{0.26\linewidth}
			\centering
			\drawCircle{0}{235}{0}{$x_3=x_1$}{$x_2$}{}
		\end{minipage}
		&
		\begin{minipage}{0.20\linewidth}
			\centering
			$
			\begin{pmatrix}
				0	& t_2	& 0 \\
				t_2	& 0 	& t_2 \\
				0	& t_2	& 0
			\end{pmatrix}
			$
		\end{minipage}
		\\ \hline
	\end{longtable}

	\begin{longtable}{|c|c|c|c|c|}
		\caption{The 0-simplices of $\St_3(\Sp^1)$. All points are either equal or antipodal to one another.}\\
		
		\hline
		Name & $\mathfrak{c}$ & $V(\mathfrak{c})$ & $\mathbf{x} \in \Im(\Phi_{\mathfrak{c}})$ & $D_3(\mathbf{x}) = D_3(\mathbf{v}^{(1)}(\mathfrak{c}))$\\
		\hline
		\endhead

		$v_1$
		&
		\begin{minipage}{0.15\linewidth}
			\centering
			$(+1,-1,-1)$
		\end{minipage}
		&
		\begin{minipage}{0.15\linewidth}
			\centering
			$(+--)$
		\end{minipage}
		&
		\begin{minipage}{0.26\linewidth}
			\centering
			\drawCircle{0}{180}{180}{$x_1$}{$x_2=x_3$}{}
		\end{minipage}
		&
		\begin{minipage}{0.20\linewidth}
			\centering
			$
			\begin{pmatrix}
				0	& \pi	& \pi \\
				\pi	& 0 	& 0 \\
				\pi	& 0		& 0
			\end{pmatrix}
			$
		\end{minipage}
		\\ \hline

		$v_2$
		&
		\begin{minipage}{0.15\linewidth}
			\centering
			$(+1,+1,-1)$
		\end{minipage}
		&
		\begin{minipage}{0.15\linewidth}
			\centering
			$(++-)$
		\end{minipage}
		&
		\begin{minipage}{0.26\linewidth}
			\centering
			\drawCircle{0}{0}{180}{$x_1=x_2$}{}{$x_3$}
		\end{minipage}
		&
		\begin{minipage}{0.20\linewidth}
			\centering
			$
			\begin{pmatrix}
				0	& 0		& \pi \\
				0	& 0 	& \pi \\
				\pi	& \pi	& 0
			\end{pmatrix}
			$
		\end{minipage}
		\\ \hline

		$v_3$
		&
		\begin{minipage}{0.15\linewidth}
			\centering
			$(+1,-1,+1)$
		\end{minipage}
		&
		\begin{minipage}{0.15\linewidth}
			\centering
			$(+-+)$
		\end{minipage}
		&
		\begin{minipage}{0.26\linewidth}
			\centering
			\drawCircle{0}{180}{0}{$x_1 = x_3$}{$x_2$}{}
		\end{minipage}
		&
		\begin{minipage}{0.20\linewidth}
			\centering
			$
			\begin{pmatrix}
				0	& \pi	& 0 \\
				\pi	& 0 	& \pi \\
				0	& \pi	& 0
			\end{pmatrix}
			$
		\end{minipage}
		\\ \hline

		$v_4$
		&
		\begin{minipage}{0.15\linewidth}
			\centering
			$(+1,+1,+1)$
		\end{minipage}
		&
		\begin{minipage}{0.15\linewidth}
			\centering
			$(+++)$
		\end{minipage}
		&
		\begin{minipage}{0.26\linewidth}
			\centering
			\drawCircle{0}{0}{0}{$x_1 = x_2=x_3$}{}{}
		\end{minipage}
		&
		\begin{minipage}{0.20\linewidth}
			\centering
			$
			\begin{pmatrix}
				0	& 0		& 0 \\
				0	& 0 	& 0 \\
				0	& 0		& 0
			\end{pmatrix}
			$
		\end{minipage}
		\\ \hline
	\end{longtable}
\end{example}
 
\end{document}